\theoremstyle{plain}
\newtheorem{theor}{Theorem}[section]
\newtheorem{lem}[theor]{Lemma}
\newtheorem{prop}[theor]{Proposition}
\theoremstyle{definition}
\newtheorem{rem}[theor]{Remark}
\newtheorem{defin0}[theor]{Definition}
\mathchardef\emptyset="001F
\numberwithin{equation}{section}
\newcommand{\Xd}{\mathbb X}
\newcommand{\R}{\mathbb R}
\newcommand{\T}{\mathbb T}
\newcommand{\Pc}{\mathcal{P}}
\newcommand{\calP}{\mathcal P}
\newcommand{\Id}{\operatorname{Id}}
\newcommand{\E}{\mathbb{E}}
\newcommand{\Ld}{\operatorname{L}}
\newcommand{\Div}{{\operatorname{div}}}
\newcommand{\Sym}{{\operatorname{sym}}}
\newcommand{\step}[1]{\noindent \textit{Step} #1.}
\newcommand{\expecM}[1]{\mathbb{E}\bigg[ #1 \bigg]}
\newcommand{\ddr}{\mathrm{d}}
\title{Creation of chaos for interacting Brownian particles}
\author[A. Bernou]{Armand Bernou}
\address[Armand Bernou]{Universit\'e de Lyon, Universit\'e Claude Bernard Lyon 1, Laboratoire de Sciences Actuarielle et Financi\`ere, 50 Avenue Tony Garnier, F-69007 Lyon, France}
\email{armand.bernou@univ-lyon1.fr}
\author[M. Duerinckx]{Mitia Duerinckx}
\address[Mitia Duerinckx]{Universit\'e Libre de Bruxelles, D\'epartement de Math\'ematique, 1050~Brussels, Belgium}
\email{mitia.duerinckx@ulb.be}
\author[M. M\'enard]{Matthieu M\'enard}
\address[Matthieu M\'enard]{Universit\'e Libre de Bruxelles, D\'epartement de Math\'ematique, 1050~Brussels, Belgium}
\email{matthieu.menard@ulb.be}
\begin{document}

\begin{abstract}
We consider a system of $N$ Brownian particles, with or without inertia, interacting in the mean-field regime via a weak, smooth, long-range potential, and starting initially from an arbitrary exchangeable $N$-particle distribution. In this model framework, we establish a fine version of the so-called creation-of-chaos phenomenon: in weak norms, the mean-field approximation for a typical particle is shown to hold with an accuracy $O(N^{-1})$ up to an error due solely to initial pair correlations, which is damped exponentially over time. Corresponding higher-order results are also derived in the form of higher-order correlation estimates.
The approach is new and easily adaptable: we start from suboptimal correlation estimates obtained from an elementary use of It\^o's calculus on moments of the empirical measure, together with ergodic properties of the mean-field dynamics, and these bounds are then made optimal after combination with PDE estimates on the BBKY hierarchy.

\bigskip\noindent
{\sc MSC-class:} Primary 60K35; Secondary 35Q70, 82C22, 82C31, 35Q84.
\end{abstract}

\maketitle

\bigskip

\setcounter{tocdepth}{1}
\tableofcontents
\allowdisplaybreaks

\section{Introduction}
\subsection{General overview}
We consider the Langevin dynamics for a system of $N$ Brownian particles with mean-field interactions, moving in a confining potential in $\R^d$, $d \ge 1$, as described by the following system of coupled SDEs: for $1 \le i \le N,$
\begin{align}\label{eq:Langevin}
\left\{\begin{array}{ll}
\ddr X^{i,N}_t=V^{i,N}_t\ddr t,&\\[2mm]
\ddr V^{i,N}_t =-\tfrac\kappa N\sum_{j=1}^N\nabla W(X^{i,N}_t-X_t^{j,N})\,\ddr t-\tfrac\beta2 V^{i,N}_t\ddr t - \nabla A(X^{i,N}_t)\,\ddr t+\ddr B_t^{i}, &\quad t \ge 0,
\end{array}\right.
\end{align}
where $\{Z^{i,N}:=(X^{i,N},V^{i,N})\}_{1\le i\le N}$ is the set of positions and velocities of the particles in the phase space $\Xd:=\R^d\times\R^d$, where $W:\R^d\to\R$ is a long-range interaction potential with the action-reaction condition $W(x)=W(-x)$, where $A$ is a uniformly convex confining potential, where~$\{B^i\}_i$ are i.i.d.\@ $d$-dimensional Brownian motions, and where $\kappa,\beta>0$ are given constants.
For simplicity, we choose the confining potential $A$ to be quadratic,
\begin{equation}\label{eq:quadr-A}
A(x)\,:=\,\tfrac12a|x|^2,\qquad\text{for some $a>0$},
\end{equation}
although perturbations can be considered as well (see Section~\ref{subsec:generalizations}).
Turning to a statistical description of the system, in terms of a probability density $F^N$ on the $N$-particle phase space $\Xd^N$, the Langevin dynamics~\eqref{eq:Langevin} is equivalent to the corresponding Liouville equation
\begin{multline}\label{eq:Liouville}
\partial_tF^N+\sum_{i=1}^Nv_i\cdot\nabla_{x_i}F^N\,=\,\tfrac12\sum_{i=1}^N\Div_{v_i}((\nabla_{v_i}+\beta v_i)F^N)\\[-3mm]
+\tfrac\kappa N\sum_{i,j=1}^N\nabla W(x_i-x_j)\cdot\nabla_{v_i}F^N+\sum_{i=1}^N\nabla A(x_i)\cdot\nabla_{v_i}F^N,\qquad t\ge0.
\end{multline}
In this contribution, we investigate the behavior of the system in the regime of a large number $N\gg1$ of particles in the general setting of initial data $\{Z^{i,N}|_{t=0}\}_{1\le i\le N}$ that are only assumed to be exchangeable (that is, their law $F^N|_{t=0}$ is invariant under permutation of the particles), but that may be strongly correlated.
Let us recall two standard types of results for the system:
\begin{enumerate}[---]
\item {\it Propagation of chaos:}\\
If particles have weak correlations initially, then propagation of chaos is expected to hold, which means that only little correlations can build over time. As a consequence, the distribution of a typical particle is approximated by the solution $\mu$ of the mean-field Vlasov--Fokker--Planck equation
\begin{equation}\label{eq:VFP}
\left\{\begin{array}{l}
\partial_t\mu+v\cdot\nabla_x\mu=\tfrac12\Div_v((\nabla_v+\beta v)\mu)+ (\nabla A + \kappa\nabla W\ast \mu)\cdot \nabla_v\mu,\qquad t\ge0,\\[1mm]
\mu|_{t=0}=\mu_\circ,
\end{array}\right.
\end{equation}
where $\mu_\circ$ stands for the initial single-particle distribution. Such a result has been established in various settings and we refer e.g.\@ to the recent work~\cite{BJS-22,BDJ-24} in case of singular interactions.
More precisely, for all $1\le m\le N$, denoting by~$F^{N,m}$ the distribution of $m$ typical particles in the system, as given by the $m$-th marginal of $F^N$,
\[F^{N,m}(z_1,\ldots,z_m) = \int_{\Xd^{N-m}} F^N(z_1,\ldots,z_N) \, \ddr z_{m+1} \ldots \ddr z_N, \qquad 1 \le m \le N,\]
we find that $F^{N,m}$ converges to $\mu^{\otimes m}$ as $N\uparrow\infty$ for any fixed $m$.
Beyond this qualitative convergence, an optimal $O(N^{-1})$ error estimate for the convergence of marginals was first obtained in~\cite{MD-21} for smooth interactions, and later in~\cite{Lacker-21} for bounded non-smooth interactions in a better metric: if correlations vanish initially, then there holds
\begin{align} 
\label{eq:prop-chaos} 
\|F^{N,m}_t-\mu^{\otimes m}_t\|_{TV}\,\le\,Ce^{Ct}mN^{-1},\qquad\text{for all $1\le m\le N$ and $t\ge0$}.
\end{align}
If in addition the intensity $\kappa$ of interaction forces is small enough, which ensures the uniqueness of the mean-field equilibrium, then this error estimate~\eqref{eq:prop-chaos} holds uniformly in time without the exponential factor~\cite{Lacker_LeFlem_2022,Blaustein-Jabin-Soler-24} (see also previous work using coupling methods but only producing a suboptimal convergence rate~\cite{Bolley_2010,Guillin2022}).
\smallskip\item {\it Relaxation to equilibrium:}\\
If interaction forces are weak enough, even if particles are strongly correlated initially, their distribution is known to relax to Gibbs equilibrium on long times.
For Lipschitz forces, the following uniform-in-$N$ exponential estimate was established in~\cite{Bolley_2010},
\[\mathcal W_2(F_t^{N,m},M^{N,m})\,\le\,C_me^{-\frac1Ct},\qquad\text{for all $1\le m\le N$ and $t\ge0$},\]
where $M^{N,m}$ stands for the $m$-th marginal of the $N$-particle Gibbs equilibrium $M^N$.
As the latter exhibits weak correlations, this entails in particular that initial correlations between the particles must be exponentially damped over time.
\end{enumerate}
Both uniform-in-time propagation of chaos and convergence to Gibbs equilibrium rely essentially on the same phenomenon: due to the diffusion, correlations cannot accumulate too much over time and must somehow be damped.
This leads to the natural related question of {\it creation of chaos} as first raised by Del Moral and Tugaut~\cite{DelMoral_2019} (see also ``generation of chaos'' in~\cite{Lukkarinen-24,Rosenzweig-Serfaty-23}): {\it for general exchangeable initial data, is it possible to show that the mean-field approximation holds to order $O(N^{-1})$ up to an error coming from initial correlations, which is damped exponentially over time?}

Such a result can in fact be deduced immediately from the coupling argument of~\cite{Bolley_2010}, but only with a suboptimal convergence rate $O(N^{-1/2})$: more precisely, for weak Lipschitz forces,
it follows from~\cite[Theorem~2 and Remark~12]{Bolley_2010} that
\begin{align}\label{eq:creation_strong}
\mathcal{W}_2(F^{N,m}_t, \mu_t^{\otimes m}) \le C_m \, N^{-\frac12} +  C_m \, e^{-\frac1Ct} \, \mathcal{W}_2(F^{N,m}_0, \mu_\circ^{\otimes m}), \quad \hbox{ for all } 1 \le m \le N \hbox{ and } t \ge 0.
\end{align}
Our first objective is to achieve the optimal rate $O(N^{-1})$ in this creation-of-chaos result, which to our knowledge is not possible via the same coupling methods. More precisely, we look for bounds of the following form, in some suitable metric,
\begin{equation}\label{eq:creation0}
\|F_t^{N,m}-\mu_t^{\otimes m}\|\,\le\,C_mN^{-1}+C_me^{-\frac1Ct}\|F_\circ^{N,m}-\mu_\circ^{\otimes m}\|,\quad\text{for all $1\le m\le N$ and $t\ge0$}.
\end{equation}
In the sequel, we establish estimates of this kind for weak smooth interactions, in negative Sobolev norms instead of Wasserstein distance; see Theorem~\ref{th:creation} below.

While virtually any proof of uniform-in-time propagation of chaos comes with some associated creation-of-chaos result showing the damping of initial correlations, we emphasize that clean estimates of the form~\eqref{eq:creation0} are more difficult to establish. In particular, there has been a lot of recent progress based on hierarchical approaches to uniform-in-time propagation of chaos~\cite{Lacker-21,BJS-22,Lacker_LeFlem_2022,Blaustein-Jabin-Soler-24,BDJ-24},
starting from the BBGKY hierarchy of equations for marginals,
but such methods require to deal with information on the {\it whole} hierarchy {$\{F^{N,m}-\mu^{\otimes m}\}_{1\le m\le N}$} at once and can thus a priori not be used to obtain estimates of the form~\eqref{eq:creation0} for any {\it fixed} $m$. For overdamped systems (see Section~\ref{subsec:overdamped} below), modulated energy methods can also be used to establish creation of chaos, see~\cite{Rosenzweig-Serfaty-23}, but they require much stronger initial information in the form of the relative entropy of the~$N$-particle distribution with respect to a tensorized state, which again cannot lead to clean estimates of the form~\eqref{eq:creation0} for marginals.

Next to this question on the creation and propagation of chaos, we further refine our analysis to corresponding correlation estimates.
Many-particle correlation functions provide finer information on the propagation of chaos in the system and we shall see that creation of chaos holds more generally at the level of arbitrary high-order correlations.
Recall that the two-particle correlation function is defined as
\[G^{N,2}\,:=\,F^{N,2}-(F^{N,1})^{\otimes2},\]
which captures the defect to propagation of chaos~\eqref{eq:prop-chaos} at the level of two-particle statistics.
This quantity alone does not allow to reconstruct the full particle density $F^N$:
we define higher-order correlation functions $\{G^{N,m}\}_{2\le m\le N}$ as suitable polynomial combinations of marginals of~$F^N$ in such a way that the full particle distribution $F^N$ is recovered in form of a cluster expansion,
\begin{equation}\label{eq:cluster-exp0}
F^N(z_1,\ldots,z_N)\,=\,\sum_{\pi\vdash \llbracket N\rrbracket}\prod_{B\in\pi}G^{N,\sharp B}(z_B),
\end{equation}
where $\pi$ runs through the list of all partitions of the index set $\llbracket N\rrbracket:=\{1,\ldots,N\}$, where $B$ runs through the list of blocks of the partition $\pi$, where $\sharp B$ is the cardinality of $B$, where for \mbox{$B=\{i_1,\ldots, i_l\}\subset\llbracket N\rrbracket$} we write $z_B=(z_{i_1},\ldots,z_{i_l})$, and where we have set $G^{N,1}:=F^{N,1}$. As is easily checked, correlation functions are fully determined by prescribing~\eqref{eq:cluster-exp0} together with the requirement $\int_\Xd G^{N,m}(z_1,\ldots,z_m)\,\ddr z_l=0$ for $2 \le m \le N$ and $1\le l\le m$.
If initial correlations vanish, standard propagation of chaos leads to $G^{N,2}_t=O(N^{-1})$,
and a formal analysis of the BBGKY hierarchy further leads to expect
\begin{equation}\label{eq:refined-prop-chaos}
G_t^{N,m}\,=\,O(N^{1-m}),\qquad2\le m\le N.
\end{equation}
This is referred to as {\it higher-order propagation of chaos} and is a key tool to describe deviations from mean-field theory, see e.g.~\cite{DSR-21,MD-21,DJ-24} and in particular~\cite[Corollary 1.3]{BD_2024} for the derivation of the so-called Bogolyubov correction to mean field. Such estimates have been obtained in several settings with an exponential time growth: non-Brownian particle systems with smooth interactions were first addressed in~\cite{MD-21}, while in~\cite{Hess_Childs_2023} the overdamped dynamics
was further covered for bounded non-smooth interactions. If interactions are weak enough, we proved in our previous work~\cite{BD_2024} that~\eqref{eq:refined-prop-chaos} actually holds uniformly in time, and the same was obtained later in~\cite{Xie-24} for the overdamped dynamics with bounded non-smooth interactions. All these results have been obtained only on the condition that initial correlations vanish.
Yet, uniform-in-time versions of~\eqref{eq:refined-prop-chaos} hint towards a corresponding `higher-order' creation-of-chaos phenomenon at the level of correlation functions:
{\it for general exchangeable initial data, is it possible to show that the $m$-particle correlation $G^{N,m}$ is of order $O(N^{1-m})$ only up to an error coming from initial $m$-particle correlations (or at least, from a finite number of high-order correlations), which is damped exponentially over time?}
For two-particle correlations, it already follows from~\eqref{eq:creation0} that
\begin{align}\label{eq:prelim_estim_G^2}
\|G^{N,2}_t\| \,\le\, C N^{-1} + C e^{-\frac1Ct}\|G^{N,2}_\circ\|.
\end{align}
In the sequel, we obtain similar estimates for all higher-order correlations, see Theorem~\ref{th:estim-GNm} below. The only other result of this kind that we are aware of is the recent work of Lukkarinen and Vuoksenmaa~\cite{Lukkarinen-24} regarding energy cumulants for the stochastic Kac model.
In that work, however, the authors appeal to hierarchical techniques, which require to deal with the whole hierarchy $\{G^{N,m}\}_{1\le m\le N}$ at once and do not yield simpler estimates of the form~\eqref{eq:prelim_estim_G^2} for correlations of a fixed order.

\subsection*{Notation}$ $
\begin{enumerate}[---]
\item We denote by $C\ge1$ any constant that only depends on the space dimension $d$, or possibly on other controlled quantities when specified. We use the notation~$\lesssim$ for~$\le C\times$ up to such a multiplicative constant~$C$. We write~$a\simeq b$ when both $a\lesssim b$ and $b\lesssim a$ hold. We also use the notation $\ll$ (resp.~$\gg$) for $\le \frac1C\times$ (resp.~$\ge C\times$) for some large enough constant $C$. We occasionally add subscripts to $C,\lesssim,\simeq,\ll,\gg$ to indicate dependence on other parameters.
\smallskip\item For any two integers $m\ge n\ge0$, we use the short-hand notation $\llbracket m,n \rrbracket:=\{m, m+1,\dots, n\}$, and in addition for any integer $m\ge1$ we set $\llbracket m \rrbracket := \llbracket 1, m \rrbracket$.
\smallskip\item For all $z \in \Xd$, we use the notation $\langle z \rangle := (1 + |z|^2)^{\frac12}$.
\smallskip\item For any measurable space $(E,\mathcal{E})$, we write $\mathcal{P}(E)$ for the space of probability measures on $E$. We endow $\Xd$ with the Borel $\sigma$-algebra.
\smallskip\item For a measure $\mu \in \calP(\Xd)$, we use the following short-hand notation for its moments,
\begin{equation}\label{eq:def-mom}
Q_r(\mu)\,:=\,\int_\Xd\langle z\rangle^r\mu(\ddr z),\qquad r\ge0.
\end{equation}
\end{enumerate}

\subsection{Main results}
We start by defining the appropriate functional framework for our results:
for~\mbox{$r \geq 0$,} $1 < q < 2$, and $0 < p \leq 1$, we consider the following weighted negative Sobolev norms on functions~$h:\Xd\to\R$,
\begin{equation}\label{eq:definition_W_tilde_1}
\|h\|_{W_p^{-r,q}(\Xd)} \,:=\, \sup \bigg\{\Big|\int_{\Xd}\varphi h \Big|~:~\varphi \in C_c^\infty(\Xd),~~\|\langle \cdot \rangle^{-p}\varphi\|_{W^{r,q'}(\Xd)} \le 1 \bigg\}
\end{equation}
and for $m\ge1$ and functions~$h:\Xd^m\to\R$,
\begin{equation}\label{eq:definition_W_tilde}
\|h\|_{W_p^{-r,q}(\Xd)^{\otimes m}} \,:=\, \sup \bigg\{\Big|\int_{\Xd^m}\varphi^{\otimes m} h \Big|~:~\varphi \in C_c^\infty(\Xd),~~\|\langle \cdot \rangle^{-p}\varphi\|_{W^{r,q'}(\Xd)} \le 1 \bigg\}.\footnote{This is the norm of the injective tensor product $W_p^{-r,q}(\Xd)^{\otimes m}$, which of course differs from the norm of $W^{-r,q}_p(\Xd^m)$.}
\end{equation}
The choice of these spaces is dictated by the available ergodic estimates on the linearized mean-field evolution, which are not known to hold on the simpler unweighted spaces $W^{-k,1}(\Xd)$; see e.g.~\cite{BD_2024} and Lemma~\ref{lem:ergodic} below.

Our first main result justifies the desired estimates~\eqref{eq:creation0} on the creation and propagation of chaos in the above norms. The estimates appear essentially optimal, except for the dependence on $m$, for the strong regularity assumption, and perhaps for the loss of regularity in the norms.

\begin{theor}[Creation of chaos]\label{th:creation}
Let the interaction potential $W$ be even, smooth and decaying in the sense of $W\in W^{d+3,\infty}\cap H^s(\R^d)$ for some $s>\frac d2+5$, and let the confining potential $A$ be quadratic~\eqref{eq:quadr-A}.
Given an exchangeable $N$-particle distribution $F^N_\circ\in\Pc(\Xd^N)$ with all bounded moments, consider the weak solution $F^N\in C(\R^+;\Pc(\Xd^N))$ of the Liouville equation~\eqref{eq:Liouville} with initial data $F^N_\circ$. Given $\mu_\circ\in\Pc(\Xd)$ with bounded second moments, also consider the weak solution $\mu\in C(\R^+;\Pc(\Xd))$ of the mean-field equation~\eqref{eq:VFP} with initial data $\mu_\circ$.
Then there is $\lambda_0>0$ such that the following holds:
given $1<q\le2$ and $0<p\le\frac16$ with $pq'\gg1$ large enough, and provided that $0<\kappa\ll1$ is small enough, we have for all $1\le m\le N$ and~$t\ge0$,\footnote{In this statement, the exponent $\lambda_0$ only depends on~$d,\beta,a$, $\|W\|_{W^{1,\infty}(\R^d)}$; the condition $pq'\gg1$ only depends on~$d,\beta,a$; the smallness condition $\kappa\ll1$ further depends on $p,q,$ $Q_2(\mu_\circ)$, $Q_2(F^{N,1}_\circ)$, and on controlled norms of $W$; and the multiplicative constant in~\eqref{eq:res-creation} further depends on $m$ and~$Q_{mp}(F^{N,1}_\circ)$.}
\begin{equation}\label{eq:res-creation}
\|F_t^{N,m}-\mu_t^{\otimes m}\|_{W_p^{-3,q}(\Xd)^{\otimes m}}
\,\lesssim\, N^{-1}
+\sum_{j=1}^{\tilde m} e^{-j\lambda_0pt}\|F_\circ^{N,j}-\mu_\circ^{\otimes j}\|_{W^{-2,q}_{3p}(\Xd)^{\otimes j}},
\end{equation}
where we have set for abbreviation $\tilde m=m$ for $m$ even and $\tilde m=m+1$ for $m$ odd.
\end{theor}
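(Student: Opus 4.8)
The plan is to combine the cluster expansion~\eqref{eq:cluster-exp0} with ergodic estimates for the tensorized linearized mean-field evolution, reducing everything to estimates on the single-particle error and on the correlation functions. Marginalizing~\eqref{eq:cluster-exp0} to $m$ particles and isolating the all-singletons partition,
\begin{equation*}
F^{N,m}-\mu^{\otimes m}=\Big(\textstyle\prod_{i=1}^m F^{N,1}(z_i)-\prod_{i=1}^m\mu(z_i)\Big)+\sum_{\substack{\pi\vdash\llbracket m\rrbracket\\ \pi\ \text{not all singletons}}}\prod_{B\in\pi}G^{N,\sharp B}(z_B).
\end{equation*}
The first term telescopes and is bounded in the injective tensor norm~\eqref{eq:definition_W_tilde} by $\lesssim_m\|F^{N,1}-\mu\|$, using the uniform bounds $\|F^{N,1}\|,\|\mu\|\lesssim1$ in the relevant weak norm (a consequence of the propagated moments and Sobolev embedding, since $pq'\gg1$); each summand in the second term factorizes over blocks, hence is bounded by $\prod_{B\in\pi}\|G^{N,\sharp B}\|$, a product with at least one factor of order $\ge2$. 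So everything reduces to (i) a single-particle creation-of-chaos estimate for $g:=F^{N,1}-\mu$, and (ii) correlation estimates $\|G^{N,j}_t\|\lesssim N^{1-j}+(\text{initial correlations, exponentially damped})$ for $2\le j\le m$: a block of size $j_\ell$ then contributes $N^{1-j_\ell}$, and $\sum_\ell(j_\ell-1)\ge1$ for a non-singleton partition produces the $N^{-1}$ in~\eqref{eq:res-creation}.

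For the correlations, I would work from the hierarchy satisfied by $\{G^{N,m}\}_m$, obtained from the BBGKY hierarchy by inserting~\eqref{eq:cluster-exp0} as in~\cite{BD_2024}: it has the form $\partial_t G^{N,m}=\mathcal L_m G^{N,m}+S_m$, where $\mathcal L_m$ is a perturbation of the $m$-fold tensorization of the linearized mean-field operator $\mathcal L_\mu$, and $S_m$ collects self-interaction terms with prefactor $O(N^{-1})$, recombination terms bilinear in $\{G^{N,a}\}_{a<m}$ and in $g$ with $N$-prefactors tuned so that the total scaling stays $\lesssim N^{1-m}$, and a coupling to $G^{N,m+1}$ obtained by tracing one variable against $\nabla W$. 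By Lemma~\ref{lem:ergodic}, provided $\kappa\ll1$, the semigroup generated by $\mathcal L_m$ decays like $e^{-m\lambda_0 pt}$ in the weighted negative Sobolev norms, at the cost of one derivative and of a bounded worsening of the weight (mapping the stronger norm on the right of~\eqref{eq:res-creation} to the weaker one on the left) — the rate being linear in $m$ because the constraints $\int G^{N,m}\,\ddr z_l=0$ kill the zero mode on each tensor factor. Duhamel's formula then gives $\|G^{N,m}_t\|\lesssim e^{-m\lambda_0 pt}\|G^{N,m}_\circ\|+\int_0^t e^{-m\lambda_0 p(t-s)}\|S_m(s)\|\,\ddr s$, and one closes the estimates by induction on $m$, treating first the coupled pair $g$ and $G^{N,2}$. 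The inductive hypotheses make the recombination part of $S_m$ of size $N^{1-m}$ up to initial-correlation contributions that the spectral gap prevents from growing in time; the nonlinear terms, of the form $\kappa(\nabla W\ast\,\cdot\,)$ acting on differences and hence harmless in the weak norms thanks to the smoothing by the smooth decaying $W$, are absorbed by a bootstrap using $\kappa\ll1$ and the a priori bounds; and the coupling to $G^{N,m+1}$ feeds in $\|G^{N,m+1}_\circ\|$, which accounts for the (slightly larger, for odd $m$) range $\tilde m$. Finally, linearizing the inverse cluster expansion around tensorized states and using that correlations of a chaotic state vanish yields $\|G^{N,k}_\circ\|\lesssim\sum_{j\le k}\|F^{N,j}_\circ-\mu_\circ^{\otimes j}\|$ (up to constants and higher-order products absorbed via $\|F^{N,j}_\circ-\mu_\circ^{\otimes j}\|\lesssim1$), which recasts all the initial data as in~\eqref{eq:res-creation}.

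The main obstacle is the uniform-in-time closure of the coupled system for $g$ and $\{G^{N,j}\}_{2\le j\le m}$: the spectral gap of $\mathcal L_m$ must simultaneously dominate the $\kappa\nabla W\ast$ perturbation (forcing $\kappa\ll1$, cf.~Lemma~\ref{lem:ergodic}), the nonlinear self-interaction, and every coupling in the hierarchy, while the $N$-prefactors in $S_m$ must be accounted for precisely so that lower-order correlations and the single-particle error never feed back destructively and so the initial information enters only through $\|F^{N,j}_\circ-\mu_\circ^{\otimes j}\|$ with $j\le\tilde m$. A secondary technical point is keeping the loss of derivatives and of weight bounded independently of $m$: this uses that every coupling in the hierarchy passes through a convolution with the smooth decaying $W$, which is precisely the role of the regularity assumption $W\in W^{d+3,\infty}\cap H^s$.
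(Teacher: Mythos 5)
Your reduction is genuinely different from the paper's: you propose to prove Theorem~\ref{th:creation} by first establishing the full correlation estimates $\|G^{N,j}_t\|\lesssim N^{1-j}+(\text{damped initial data})$ for all $2\le j\le m$ via the correlation hierarchy, and then summing the cluster expansion~\eqref{eq:cluster-exp0}. The paper does the opposite: Theorem~\ref{th:creation} is proved \emph{without} higher-order correlation analysis, by combining (i) moment bounds on $\int\varphi(\mu^N_t-\mu_t)$ (Proposition~\ref{prop:ito}, proved by It\^o's formula along the dual linearized flow $U_{s,t}$ plus concentration of moments), which control all $m$-marginals at once but only at rate $N^{-1/2}$, with (ii) a single BBGKY step for the \emph{first} marginal only, into which the suboptimal bound on $G^{N,2}$ is inserted to upgrade $N^{-1/2}$ to $N^{-1}$; the correlation hierarchy appears only later, for Theorem~\ref{th:estim-GNm}.

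The main gap in your route is the closure of the hierarchy. The equation for $G^{N,m}$ couples upward to $G^{N,m+1}$ through the term $H_kG^{N,\llbracket m\rrbracket\cup\{*\}}$ with an $O(1)$ prefactor, so Duhamel feeds in $\int_0^t e^{-\lambda_0pm(t-s)}\|G^{N,m+1}_s\|\,\ddr s$ at \emph{all} times $s$, not, as you assert, merely $\|G^{N,m+1}_\circ\|$; an induction on $m$ from below therefore does not close, and inducting upward is impossible since the hierarchy runs to level $N$. One needs an a priori bound on $G^{N,m+1}_s$ to truncate, and this is exactly the paper's key ingredient that your proposal never supplies: the suboptimal $O(N^{-j/2})$ correlation bounds extracted from the moment estimates of Proposition~\ref{prop:ito} (via cumulants of the empirical measure), which are then iterated finitely many times through the hierarchy to reach $N^{1-j}$, losing one derivative per iteration. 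Relatedly, even if you completed your scheme along the lines of the paper's Theorem~\ref{th:estim-GNm}, it would prove a strictly weaker statement than Theorem~\ref{th:creation}: the hierarchy iteration forces norms that deteriorate with the order ($r_j=j+1$ derivatives, weights $p_j$ and exponents $q_j$ degrading in $j$), a smallness condition on $\kappa$ depending on $m$, and regularity of $W$ growing with $m$, whereas the theorem asserts a fixed $W^{-3,q}_p$ norm, $\kappa$ independent of $m$, and only $W\in W^{d+3,\infty}\cap H^s$, $s>\frac d2+5$. In particular your block-wise factorization step breaks down quantitatively: a test function $\varphi$ admissible for the $W_p^{-3,q}$-tensor norm on the left-hand side of~\eqref{eq:res-creation} controls only three derivatives, so it is not an admissible test function for the $W^{-r_j,q_j}_{p_j}$ norms (with $r_j>3$) in which the correlation bounds for blocks of size $j\ge3$ can actually be established. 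Finally, your attribution of the range $\tilde m$ to the coupling with $G^{N,m+1}$ is off: in the paper it comes from bounding odd absolute moments of $\int\varphi(\mu^N-\mu)$ at time $0$ by Cauchy--Schwarz between the adjacent even moments, which brings in the $(m+1)$-th initial marginal.
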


We turn to the creation of chaos at the level of correlation estimates. The precise combination of initial correlations that needs to be tracked quickly becomes quite cumbersome for high-order correlations. Although its expression is easily inferred from the proof, we stick here, for readability, to a simpler statement where we assume that correlations have a definite scaling initially.
We emphasize that for estimates on $m$-particle correlations we only need information on initial $j$-particle correlations with $j\le2m-1$, cf.~\eqref{eq:hypothesis_chaoticity_initial_datum}, in contrast to the much stronger requirements for hierarchical arguments.
In the case of chaotic initial data, we recover in particular the uniform-in-time correlation estimates obtained in~\cite{BD_2024}, cf.~\eqref{eq:correl-iid}.

\begin{theor}[Creation of chaos in correlation estimates]\label{th:estim-GNm}
Let $W, A,F^N,\lambda_0,p,q,\kappa$ be as in the statement of Theorem~\ref{th:creation}, and further assume $W\in C^\infty_b\cap H^\infty(\R^d)$.
Then the two-particle correlation function satisfies for all $t\ge0$,\footnote{The multiplicative constant in~\eqref{eq:estim-GN2-cr} only depends on $d, \beta, a, Q_{2}(F^{N,1}_\circ)$, and on $\|W\|_{W^{d+3,\infty}\cap H^s(\Xd)}$ for some $s>\frac d2+5$; the additional smallness condition on $\kappa$ only depends on $p,q,Q_2(F_\circ^{N,1}),m$; and the multiplicative constants in~\eqref{eq:conclusion_thm_estim_GNm} and~\eqref{eq:correl-iid} further depend on $Q_{mp}(F^{N,1}_\circ)$, $\|W\|_{W^{d+m+2,\infty}(\R^d)}$, and on the constants $(C_j^\circ)_{2 \leq j \leq 2m-1}$ in~\eqref{eq:hypothesis_chaoticity_initial_datum}.}
\begin{equation}\label{eq:estim-GN2-cr}
\|G^{N,2}_t\|_{W^{-3,q}_p(\Xd)^{\otimes2}} \,\lesssim\, N^{-1} + e^{-\lambda_0pt}\|G^{N,2}_\circ\|_{W^{-2,q}_{3p}(\Xd)^{\otimes2}}.
\end{equation}
In addition, given $2\le m\le N$ and $\alpha \in [0,1]$, if we have initially
\begin{equation}\label{eq:hypothesis_chaoticity_initial_datum}
\|G_\circ^{N,j}\|_{W^{-2,q}_{3p}(\Xd)^{\otimes j}}\,\le\,C_j^\circ N^{\alpha(1-j)},\qquad\text{for all $2\le j\le 2m - 1$},
\end{equation}
and if $\kappa$ is small enough (further depending on $m$), then we have for all~$t \ge 0$,
\begin{equation}
\label{eq:conclusion_thm_estim_GNm} 
\|G^{N,m}_t\|_{W^{-r_m,q_m}_{p_m}(\Xd)^{\otimes m}} \,\lesssim\,N^{1-m}+e^{-\lambda_0 p_mt} N^{\alpha(1-m)},
\end{equation}
for $r_m=m+1$, some $q_m'\simeq_mq'$, and $p_m\simeq_m p$ with $p_mq_m'=pq'$.
In particular, if the initial data are chaotic, that is, if $F_\circ^N=\mu_\circ^{\otimes N}$ for some $\mu_\circ\in\Pc(\Xd)$, then we have for all $2\le m\le N$ and $t\ge0$,
\begin{equation}\label{eq:correl-iid}
\| G^{N,m}_t\|_{W^{-r_m,q_m}_{p_m}(\Xd)^{\otimes m}}\,\lesssim\,N^{1-m}.
\end{equation}
\end{theor}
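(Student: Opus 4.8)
The plan is to deduce the two-particle estimate~\eqref{eq:estim-GN2-cr} directly from Theorem~\ref{th:creation}, and then to obtain the higher-order estimates~\eqref{eq:conclusion_thm_estim_GNm} by revisiting the cluster hierarchy underlying the proof of Theorem~\ref{th:creation} and tracking the initial correlations through a Duhamel iteration, proceeding by induction on~$m$; the chaotic case~\eqref{eq:correl-iid} then follows by specialization. Throughout, one applies Theorem~\ref{th:creation} with the natural coupling $\mu_\circ=F^{N,1}_\circ$, which is admissible since $F^N_\circ$ has bounded moments and which makes the single-particle contribution to~\eqref{eq:res-creation} vanish; in particular $\|F^{N,1}_t-\mu_t\|_{W_p^{-3,q}(\Xd)}\lesssim N^{-1}$ uniformly in $t\ge0$, and $F^{N,2}_\circ-\mu_\circ^{\otimes2}=G^{N,2}_\circ$. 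Writing $g_t:=F^{N,1}_t-\mu_t$, the decomposition
\[
G^{N,2}_t=(F^{N,2}_t-\mu_t^{\otimes2})-\mu_t\otimes g_t-g_t\otimes\mu_t-g_t\otimes g_t,
\]
the elementary product bound $\|h_1\otimes h_2\|_{W_p^{-3,q}(\Xd)^{\otimes2}}\le\|h_1\|_{W_p^{-3,q}(\Xd)}\,\|h_2\|_{W_p^{-3,q}(\Xd)}$ (immediate from~\eqref{eq:definition_W_tilde}, as one only tests against rank-one functions $\varphi^{\otimes2}$), the uniform-in-time bound $\|\mu_t\|_{W_p^{-3,q}(\Xd)}\lesssim Q_p(\mu_t)\lesssim1$ (via a Sobolev embedding and propagation of moments for~\eqref{eq:VFP}), and Theorem~\ref{th:creation} applied with $m=1$ and $m=2$ together yield $\|G^{N,2}_t\|_{W_p^{-3,q}(\Xd)^{\otimes2}}\lesssim N^{-1}+e^{-2\lambda_0pt}\|G^{N,2}_\circ\|_{W^{-2,q}_{3p}(\Xd)^{\otimes2}}$, which is~\eqref{eq:estim-GN2-cr} and also settles the case $m=2$ of~\eqref{eq:conclusion_thm_estim_GNm}, with $r_2=3$, $q_2=q$, $p_2=p$, using~\eqref{eq:hypothesis_chaoticity_initial_datum} for $j=2$.

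For $m\ge3$, a purely combinatorial approach does not suffice: Möbius inversion of~\eqref{eq:cluster-exp0} expresses $G^{N,m}$ through $F^{N,m}-(F^{N,1})^{\otimes m}$ and products of lower-order correlations, but Theorem~\ref{th:creation} controls $F^{N,m}_t-\mu_t^{\otimes m}$ only at size $O(N^{-1})$, whereas the gain to $O(N^{1-m})$ rests on cancellations invisible at the level of marginals. One therefore argues directly on the evolution equation for $G^{N,m}$ obtained by inserting the cluster expansion into the Liouville equation~\eqref{eq:Liouville}, as in the proof of Theorem~\ref{th:creation}: schematically,
\[
\partial_tG^{N,m}=\mathcal L^N_mG^{N,m}+\kappa N^{-1}\mathcal C^{N,m}+\mathcal R^{N,m},
\]
where $\mathcal L^N_m$ is a bounded perturbation of the $m$-fold tensorized linearization of~\eqref{eq:VFP} around $\mu_t$, where $\mathcal C^{N,m}$ collects the ``coupling-up'' term $\propto G^{N,m+1}$ together with the ``recollision'' terms built from connected products of correlations of orders $\le m-1$ whose orders sum to $m$ or $m+1$, and where $\mathcal R^{N,m}$ gathers the mean-field corrections $\propto g_t$, controlled through $\|g_t\|\lesssim N^{-1}$. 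Duhamel's formula together with the ergodic estimate of Lemma~\ref{lem:ergodic} — which, after tensorization and absorption of the $O(1)$ perturbation, provides a contraction $e^{-\lambda_0p_mt}$ at the cost of finitely many derivatives and a bounded weight shift — then gives, in suitable weighted negative Sobolev norms,
\[
\|G^{N,m}_t\|\lesssim e^{-\lambda_0p_mt}\|G^{N,m}_\circ\|+\int_0^te^{-\lambda_0p_m(t-s)}\big(\kappa N^{-1}\|\mathcal C^{N,m}_s\|+\|\mathcal R^{N,m}_s\|\big)\,\ddr s.
\]

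To close the induction, one notes that the first Duhamel term is $\lesssim e^{-\lambda_0p_mt}\|G^{N,m}_\circ\|\le C_m^\circ e^{-\lambda_0p_mt}N^{\alpha(1-m)}$ by~\eqref{eq:hypothesis_chaoticity_initial_datum}, which is the second term of~\eqref{eq:conclusion_thm_estim_GNm}. The recollision terms in $\mathcal C^{N,m}$ are bounded by the induction hypothesis applied to their lower-order factors: by the connectedness constraint, the product of the inductive bounds $N^{1-j_\ell}$ (and $N^0$ for one-particle factors $F^{N,1}$) times the overall $\kappa N^{-1}$ contributes $\lesssim N^{1-m}$ for the mean-field part and $\lesssim e^{-\lambda_0p_ms}N^{\alpha(1-m)}$ for the part carrying initial correlations. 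The coupling-up term $\propto\kappa N^{-1}G^{N,m+1}$ is treated by iterating Duhamel's formula up the hierarchy: after $\ell$ steps one has accumulated a factor $N^{-\ell}$ and reached $G^{N,m+\ell}$, whose crude bound $\lesssim1$ already suffices once $\ell\ge m-1$, i.e.\ once order $2m-1$ is reached; this is precisely the origin of the threshold $2m-1$ in~\eqref{eq:hypothesis_chaoticity_initial_datum}, since the initial data $G^{N,m+\ell}_\circ$ of orders $m+1\le m+\ell\le2m-1$ are picked up along the way and controlled by~\eqref{eq:hypothesis_chaoticity_initial_datum}. Integrating against $e^{-\lambda_0p_m(t-s)}$ preserves the form $N^{1-m}+e^{-\lambda_0p_mt}N^{\alpha(1-m)}$ and closes the induction; the norm indices degrade by a bounded amount at each of the $O(m)$ uses of Lemma~\ref{lem:ergodic} and of the collision operator, which produces the exponents $r_m=m+1$, $q_m\simeq_mq$ and $p_m\simeq_mp$ with $p_mq_m'=pq'$. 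Finally, for chaotic data $F^N_\circ=\mu_\circ^{\otimes N}$ all marginals factorize, so $G^{N,j}_\circ=0$ for $j\ge2$, and hence~\eqref{eq:hypothesis_chaoticity_initial_datum} holds trivially and~\eqref{eq:conclusion_thm_estim_GNm} reduces to~\eqref{eq:correl-iid}, recovering the uniform-in-time correlation estimates of~\cite{BD_2024}.

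The main difficulty lies in the analysis of the correlation hierarchy in the weighted negative Sobolev scales: one must show that \emph{every} contribution beyond the linearized operator $\mathcal L^N_m$ carries the requisite power of $N^{-1}$ together with the requisite damped dependence on initial correlations, keep track of the connected combinatorics of the recollision terms, and organize the coupling-up iteration so that only initial $j$-particle correlations with $j\le2m-1$ enter, all while controlling the accumulated loss of regularity and the weight shifts. By contrast, the two-particle estimate~\eqref{eq:estim-GN2-cr} is essentially a formal consequence of Theorem~\ref{th:creation}.
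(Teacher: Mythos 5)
Your treatment of the two-particle estimate \eqref{eq:estim-GN2-cr} is essentially the paper's route (the paper reads it off the intermediate bound \eqref{eq:step1suboptimal} in the proof of Theorem~\ref{th:creation}, you re-derive it from the theorem's statement with the decomposition of $G^{N,2}$; both work). One inaccuracy there: choosing $\mu_\circ=F^{N,1}_\circ$ does \emph{not} make the whole initial contribution in \eqref{eq:res-creation} vanish for $m=1$, since $\tilde m=2$, so the correct bound is $\|F^{N,1}_t-\mu_t\|_{W^{-3,q}_p(\Xd)}\lesssim N^{-1}+e^{-2\lambda_0pt}\|G^{N,2}_\circ\|_{W^{-2,q}_{3p}(\Xd)^{\otimes2}}$, not $\lesssim N^{-1}$ uniformly. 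This is harmless for \eqref{eq:estim-GN2-cr}, but you reuse the erroneous bound later when absorbing the remainder $\mathcal R^{N,m}$.

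The genuine gap is in the higher-order argument. In the correlation hierarchy (Lemma~\ref{lem:hier-corr}) the coupling-up term is $\kappa\frac{N-m}{N}\sum_k H_kG^{N,\llbracket m\rrbracket\cup\{*\}}$, i.e.\ it couples $G^{N,m}$ to $G^{N,m+1}$ with an $O(\kappa)$ coefficient and \emph{no} factor $N^{-1}$; likewise the bilinear terms whose orders sum to $m+1$ carry no $N^{-1}$. Your schematic equation $\partial_tG^{N,m}=\mathcal L^N_mG^{N,m}+\kappa N^{-1}\mathcal C^{N,m}+\mathcal R^{N,m}$ is therefore structurally wrong, and the mechanism you build on it collapses: iterating Duhamel up the hierarchy does not accumulate a factor $N^{-\ell}$, so stopping at order $2m-1$ with the crude bound $\|G^{N,2m-1}\|\lesssim1$ yields no power of $N$ whatsoever, and your induction does not close. (If the coupling-up term really were $O(\kappa/N)$, the optimal scaling \eqref{eq:refined-prop-chaos} would follow by a trivial induction, which it notoriously does not.) What is missing is precisely the paper's key ingredient: the suboptimal a priori bounds $\|G^{N,j}_t\|\lesssim N^{-j/2}+e^{-ct}N^{-\alpha j/2}$ for all orders $j\le 2m-1$, obtained from the moment estimates on the empirical measure (Proposition~\ref{prop:ito}) through the cumulant--correlation correspondence (Proposition~\ref{prop:basic_control_Gm}). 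These bounds are what truncate the hierarchy, and each pass through the hierarchical estimate (Proposition~\ref{prop:advance_control_GNm}) then improves the exponent by $1/2$ (since $G^{N,m+1}$ is suboptimally smaller than $G^{N,m}$ by $N^{-1/2}$), reaching $N^{1-m}$ after finitely many iterations; this, and not your bookkeeping of initial data ``picked up along the way'' with $N^{-\ell}$ factors, is also the actual origin of the threshold $2m-1$ in \eqref{eq:hypothesis_chaoticity_initial_datum}. Without these moment-based suboptimal estimates (or an equivalent substitute), your proposal cannot reach \eqref{eq:conclusion_thm_estim_GNm}.
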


\begin{rem}[Smallness assumption]\label{rem:sub_optimal_creation_chaos}
In the above results~\eqref{eq:conclusion_thm_estim_GNm}--\eqref{eq:correl-iid}, the smallness assumption on $\kappa$ depends on the order $m$ of the correlation function of interest. This is necessary in our approach (except in the simpler setting of the overdamped dynamics, see Section~\ref{subsec:overdamped} below), but it can be at least partially relaxed.
More precisely, focusing for shortness on the particular case of chaotic initial data, $F^N_\circ=\mu_\circ^{\otimes N}$, we can prove the following suboptimal bounds: letting $W, A, F^N, p, q, \kappa$ be as in Theorem~\ref{th:creation} (hence, with $\kappa$ independent of $m$), we have for all $2 \le m \le N$ and~$t \ge 0$,
\begin{align*}
\|G^{N,m}_t\|_{W^{-2,q}_{p_m}(\Xd)^{\otimes m}} \lesssim N^{-\frac{m}{2}}. 
\end{align*}
(To see this, we refer to Step~1 of the proof of Theorem~\ref{th:estim-GNm} in Section~\ref{sec:correl}, simply specializing it to the case of chaotic data, hence $\alpha=1$.)
\end{rem}

\subsection{Extensions}
\label{subsec:generalizations}
We briefly discuss some simple extensions of the above results. The adaptation to the Kac model, based on ergodic estimates for the space-homogeneous Boltzmann equation and the use of a similar BBGKY hierarchy, is a bit more demanding and is postponed to a separate work, improving the recent result of Lukkarinen and Vuoksenmaa in~\cite{Lukkarinen-24}. 

\subsubsection{Overdamped dynamics}\label{subsec:overdamped}
Our results also apply to the overdamped Langevin dynamics, as described by the following system of coupled SDEs: for $1\le i\le N$,
\begin{equation}\label{eq:Brownian_system}
dY^{i,N}_t = \tfrac{\kappa}{N} \sum_{j=1}^N K(Y^{i,N}_t-Y^{j,N}_t)\, \ddr t - \nabla A(Y^{i,N}_t) + dB^i_t, \qquad t \ge 0,
\end{equation}
where $K:\R^d \to \R^d$ is a smooth force kernel with the action-reaction condition $K(x)=-K(-x)$ (we may take $K$ of the form $K=-\nabla W$, but this is not necessary).
In terms of a probability density $\tilde F^N$ on the $N$-particle phase space $(\R^d)^N$, this particle dynamics is equivalent to the following Liouville equation,
\begin{equation}\label{eq:Liouville_Brownian}
\partial_t \tilde F^N = \tfrac12 \sum_{i=1}^N \triangle_{x_i} \tilde F^N_t-\frac{\kappa}{N} \sum_{i, j = 1}^N \Div_{x_i} \big(K(x_i-x_j)\tilde F^N\big)+ \sum_{i=1}^N \Div_{x_i} \big(\nabla A(x_i) \tilde F^N\big).
\end{equation}
The associated mean-field equation is now the McKean--Vlasov equation
\begin{equation}\label{eq:MF_Brownian}
\partial_t \tilde \mu = \tfrac12 \triangle \tilde \mu - \kappa\, \Div ( \tilde \mu (K \ast \tilde \mu) )+ \Div ( \tilde \mu \nabla A ).
\end{equation}
In this setting, by standard parabolic theory, ergodic estimates for the linearized mean-field equation hold in the {\it unweighted} spaces $W^{-k,1}(\R^d)$ for $k\ge0$, see~\cite{Delarue_Tse_21,BD_2024}. This simplifies our proof substantially: while working in {\it weighted} Sobolev spaces in the kinetic setting forces us to use fine concentration estimates for moments of the particle dynamics, this is no longer needed here. In addition, when adapting the proof to the present unweighted framework, we find that the smallness requirement on~$\kappa$ in correlation estimates is no longer required to depend on the order of the correlation.
More precisely, we obtain the following.

\begin{theor}[Creation of chaos --- overdamped setting]
\label{th:brownian}
Let the interaction force $K$ be odd, smooth and decaying in the sense of $K \in C^\infty_b\cap H^\infty(\R^d)$, and let the confining potential $A$ be quadratic~\eqref{eq:quadr-A}.
Given an exchangeable $N$-particle distribution $\tilde F^N_\circ \in \calP((\R^d)^N)$ with bounded second moments, we consider the solution $\tilde F^N \in C(\R^+; \mathcal{P}((\R^d)^N))$ of~\eqref{eq:Liouville_Brownian} with initial data $\tilde F^N_\circ$ and denote by $\{\tilde G^{N,m}\}_{2\le m \le N}$ the corresponding correlation functions. Given $\tilde \mu_\circ \in \mathcal{P}(\R^d)$ with bounded second moments, also consider the weak solution $\tilde \mu \in C(\R^+; \calP (\R^d))$ of the McKean--Vlasov equation~\eqref{eq:MF_Brownian} with initial data~$\tilde \mu_\circ$.
Then there is $\lambda_0 > 0$ such that the following hold for $0<\kappa\ll1$ small enough.\footnote{In this statement, the exponent $\lambda_0$ only depends on $d,\beta,a$, $\|W\|_{W^{1,\infty}(\R^d)}$; the smallness condition on $\kappa$ further depends on $Q_2(\mu_\circ)$, $Q_2(F_\circ^{N,1})$, and on some norms of $W$; the multiplicative constant in~\eqref{eq:estim-FNm-mum-tilde} further depends on $m$ and on some norms of $W$ also depending on $m$; the multiplicative constant in~\eqref{eq:ccl_many_particle_brownian} further depends on the constants $(C_j^\circ)_{2\le j\le 2m-1}$ in~\eqref{eq:init_scaling_brownian_2}.}
\begin{enumerate}[---]
\item For all $1\le m\le N$ and $t\ge0$ we have
\begin{equation}\label{eq:estim-FNm-mum-tilde}
\|\tilde F^{N,m}_t - \tilde \mu_t^{\otimes m}\|_{W^{-3,1}(\R^d)^{\otimes m}} \,\lesssim\, N^{-1} + \sum_{j=1}^{\tilde m} e^{-j \lambda_0 t} \|\tilde F^{N,j}_\circ - \tilde \mu_\circ^{\otimes j} \|_{W^{-2, 1}(\R^d)^{\otimes j}},
\end{equation}
where we recall the short-hand notation $\tilde m=m+\mathds1_{\text{$m$ odd}}$.
\smallskip\item
Given $2\le m\le N$ and $\alpha \in [0,1]$, if we have initially
\begin{equation}\label{eq:init_scaling_brownian_2}
\|\tilde G^{N,j}_\circ\|_{W^{-2,1}(\R^d)^{\otimes j}} \,\le\,C_j^\circ {N^{\alpha(1-j)}}, \qquad \text{for all $1 \le j \le 2m-1$},
\end{equation}
then we have for all $t \ge 0$,
\begin{equation}\label{eq:ccl_many_particle_brownian}
\|\tilde G^{N,m}_t\|_{W^{-m-1,1}(\R^d)^{\otimes m}} \,\lesssim\, N^{1-m} + e^{-\lambda_0 t} N^{\alpha(1-m)}.
\end{equation}
In addition, if the initial data are chaotic, that is, if $\tilde F_\circ^N=\tilde \mu_\circ^{\otimes N}$, then for all $2\le m\le N$ and $t\ge0$,
\[\|\tilde G^{N,m}_t\|_{W^{-m-1,1}(\R^d)^{\otimes m}}\,\lesssim\,N^{1-m}.\]
\end{enumerate}
\end{theor}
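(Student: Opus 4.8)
\emph{Proof strategy.} The plan is to rerun, in the overdamped setting, the argument behind the proofs of Theorems~\ref{th:creation} and~\ref{th:estim-GNm}, taking advantage of the fact that the linearized mean-field evolution is now parabolic. Write $\mathcal L_{\tilde\mu_t}$ for the linearization of the McKean--Vlasov operator in~\eqref{eq:MF_Brownian} about the mean-field trajectory $\tilde\mu$, and let $\mathcal S_{t,s}$ denote the associated two-parameter semigroup. The first ingredient is that, for $0<\kappa\ll1$, standard parabolic regularity (as used in~\cite{Delarue_Tse_21,BD_2024}) provides an exponentially decaying estimate for $\mathcal S_{t,s}$ \emph{directly} on the unweighted spaces $W^{-k,1}(\R^d)$, namely $\|\mathcal S_{t,s}h\|_{W^{-k,1}(\R^d)}\lesssim e^{-\lambda_0(t-s)}\|h\|_{W^{-k,1}(\R^d)}$, together with a smoothing variant trading one derivative for an integrable factor $(t-s)^{-1/2}$. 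This plays the role of Lemma~\ref{lem:ergodic} and, crucially, dispenses with the concentration estimates on moments of the particle system that the weights forced upon us in the kinetic case.

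The second ingredient is the BBGKY-type hierarchy for the correlation functions $\{\tilde G^{N,m}\}_m$. Inserting the cluster expansion~\eqref{eq:cluster-exp0} into the Liouville equation~\eqref{eq:Liouville_Brownian} and extracting the evolution of the $m$-point correlation, one obtains an equation of the schematic form $\partial_t \tilde G^{N,m} = \big(\sum_{i=1}^m \mathcal L_{\tilde\mu_t}^{(i)}\big)\tilde G^{N,m} + \mathcal R^{N,m}$, where the source $\mathcal R^{N,m}$ is a finite sum of contracted products of correlation functions of two types: one family carries an explicit prefactor $O(N^{-1})$ inherited from the mean-field scaling $\tfrac\kappa N$ and involves only correlations of order $\le m$, producing the leading $N^{1-m}$; the other family couples $\tilde G^{N,m}$ to correlations of strictly higher order, and a careful count --- the analogue of the corresponding combinatorial step in the proof of Theorem~\ref{th:estim-GNm} --- shows that only orders up to $2m-1$ enter, which is exactly the range of initial data assumed in~\eqref{eq:init_scaling_brownian_2}.

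With the hierarchy at hand, I would close the estimates by induction on $m$ via Duhamel's formula: assuming the claimed bounds for all lower orders $j<m$ together with hypothesis~\eqref{eq:init_scaling_brownian_2} up to order $2m-1$, one substitutes these into $\mathcal R^{N,m}$, applies the decaying (and, where the source carries the divergence $\Div_{x_i}$, smoothing) semigroup from the left, and integrates; Gr\"onwall's lemma then yields~\eqref{eq:ccl_many_particle_brownian}, the index $r_m=m+1$ accounting for the derivatives carried along the hierarchy. The marginal estimate~\eqref{eq:estim-FNm-mum-tilde} follows by running the same Duhamel argument directly on $\tilde F^{N,m}-\tilde\mu^{\otimes m}$, or equivalently by re-expanding this defect through the cluster expansion in terms of the $\tilde G^{N,j}$, just as~\eqref{eq:prelim_estim_G^2} is deduced in the kinetic case. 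The feature specific to the overdamped setting --- and the reason the smallness threshold on $\kappa$ need not depend on $m$, in contrast with Remark~\ref{rem:sub_optimal_creation_chaos} --- is that the $L^1$-based unweighted norms lose no weight along the hierarchy, so the constants generated at each inductive step stay controlled and a single $\kappa\ll1$ suffices for $\lambda_0$ to beat the loss at every level.

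The main difficulty I anticipate is organizational rather than analytic: writing $\mathcal R^{N,m}$ in a form where both its $O(N^{-1})$-part and its higher-order part of order $\le 2m-1$ are manifest, and then checking that the induction closes uniformly in $m$. In the kinetic setting this step is genuinely delicate, since the weighted spaces force the moment-concentration estimates and an $m$-dependent loss of weight; here the parabolic smoothing makes those obstructions disappear, so the argument should reduce to a streamlined rerun of the proofs of Theorems~\ref{th:creation}--\ref{th:estim-GNm}.
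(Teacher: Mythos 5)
Your functional-analytic setup (unweighted parabolic ergodic estimates on $W^{-k,1}(\R^d)$ for the linearized McKean--Vlasov flow, no need for moment concentration, and a $\kappa$-smallness threshold independent of $m$ because no weight is lost along the iterations) is indeed the correct overdamped simplification. But there is a genuine gap in how you propose to close the estimates. The correlation hierarchy at order $m$ (the overdamped analogue of Lemma~\ref{lem:hier-corr}) is sourced by the \emph{time-dependent, higher-order} correlation $\tilde G^{N,m+1}_s$, and likewise the BBGKY equation for $\tilde F^{N,m}$ involves $\tilde F^{N,m+1}_s$. Your ``induction on $m$ via Duhamel $+$ Gr\"onwall'', which substitutes into the source only the claimed bounds for orders $j<m$ and the initial hypothesis~\eqref{eq:init_scaling_brownian_2}, therefore does not close: hypothesis~\eqref{eq:init_scaling_brownian_2} controls correlations only at $t=0$, and at that stage of your induction no bound on $\tilde G^{N,m+1}_s$ for $s>0$ is available. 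This upward coupling is exactly why the paper stresses in the introduction that purely hierarchical arguments cannot yield fixed-$m$ estimates of the form~\eqref{eq:creation0}; your claim that ``a careful count shows that only orders up to $2m-1$ enter'' the equation for $\tilde G^{N,m}$ is not what happens --- each equation brings in one order more, and the range $2m-1$ in~\eqref{eq:init_scaling_brownian_2} arises from finitely many iterations of the hierarchy (orders $m$ up to roughly $2m-1$) combined with an independent a priori bound at those orders, not from the structure of a single source term.

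The missing ingredient is the first pillar of the paper's strategy: the It\^o/martingale moment estimates on $\int\varphi\,(\mu^N_t-\mu_t)$ tested along the backward dual linearized flow, i.e.\ the overdamped analogue of Proposition~\ref{prop:ito} and of~\eqref{eq:prop-fund} (which is in fact \emph{easier} here, since the unweighted ergodic estimates dispense with Lemma~\ref{lem:mom}). Reformulated through cumulants as in Proposition~\ref{prop:basic_control_Gm}, these give suboptimal but all-time bounds of the type $\|\tilde G^{N,n}_t\|\lesssim N^{-n/2}+e^{-\lambda_0 n t}N^{-\alpha n/2}$ for all $n\le 2m-1$, and it is these bounds that truncate the upward-coupled hierarchy: one then iterates the Duhamel estimate of Proposition~\ref{prop:advance_control_GNm} finitely many times, gaining half a power of $N$ per pass until the optimal $N^{1-m}$ is reached (this is also the source of the regularity loss $r_m=m+1$). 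Similarly, \eqref{eq:estim-FNm-mum-tilde} is not obtained by running Duhamel directly on $\tilde F^{N,m}-\tilde\mu^{\otimes m}$ (same closure problem), nor simply by re-expanding through the cluster expansion (the higher-order correlation bounds you would need are only proved under the extra assumption~\eqref{eq:init_scaling_brownian_2}, while \eqref{eq:estim-FNm-mum-tilde} holds for arbitrary exchangeable data): in the paper the optimal $N^{-1}$ is first obtained for the \emph{first} marginal by combining the suboptimal moment bound with a single BBGKY step, and higher marginals are then handled by expressing them through moments of the empirical measure as in Step~3 of the proof of Theorem~\ref{th:creation}. In short, your proposal keeps the hierarchy but drops the moment estimates, and without the latter the argument cannot be closed; with them, your remaining observations (unweighted norms, $m$-independent $\kappa$, cf.\ Remark~\ref{rem:sub_optimal_creation_chaos}) do match the paper's intended adaptation.
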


\subsubsection{Periodic setting}
Our analysis is easily adapted to particle systems on the torus $\T^d$, for instance in the spatially homogeneous setting $A \equiv 0$. However, in that case, for the Langevin dynamics~\eqref{eq:Langevin}, ergodic estimates for the linearized mean-field equation only yield a super-polynomial decay $t^{-\infty}$ instead of exponential; see indeed~\cite[Theorem~56]{Villani_2009} and~\cite[Remark~2.14]{BD_2024}. As a result, Theorems~\ref{th:creation} and~\ref{th:estim-GNm} then take the same form as above up to replacing the exponential time decay~$e^{-ct}$ by~$t^{-\infty}$.
For the overdamped dynamics~\eqref{eq:Brownian_system}, on the other hand, the exponential decay still holds in the periodic setting.

\subsubsection{Non-quadratic confinement}
Both for the Langevin dynamics~\eqref{eq:Langevin} and the overdamped dynamics~\eqref{eq:Brownian_system}, our results would hold in the same form if the quadratic
confinement~\eqref{eq:quadr-A} were replaced by $A(x) = a|x|^2 + A'(x)$ for some $a > 0$ and some smooth potential $A' \in C^{\infty}_c(\R^d)$, provided that $\|\nabla^2 A'\|_{L^\infty(\R^d)}$ is small enough (only depending on $\beta, W, a)$. In that case, we can still appeal to~\cite{BD_2024} to ensure the validity of the ergodic estimates of Lemma~\ref{lem:ergodic} below, while the rest of our approach can be adapted directly without major difficulties.

\subsubsection{More general interactions}
While we focus for shortness on pairwise interactions deriving from a potential, the term $\frac\kappa{N} \sum_{j=1}^N \nabla W(X^{i,N}-X^{j,N})$ in~\eqref{eq:Langevin} (or correspondingly in~\eqref{eq:Brownian_system})
could be replaced by
a more general interaction term of the form $\kappa\,b(X^{i,N}, \mu^N)$ for some smooth functional $b: \R^d \times \mathcal{P}(\R^d) \to \R$,
where $\mu^N$ stands for the empirical measure
\begin{equation}\label{eq:def_muNt}
\mu^N_t \,:=\, \frac1{N}\sum_{i=1}^N \delta_{Z^{i,N}_t} \in \calP(\Xd).
\end{equation}
Equation~\eqref{eq:Langevin} corresponds to the particular choice $b(x,\mu) = -\nabla W \ast \mu(x)$, but our analysis is easily adapted to more general functionals $b$ under suitable regularity assumptions both on $b$ and on its derivatives with respect to the measure argument, together with a corresponding smallness condition on $\kappa$. We skip the details and refer to~\cite{Delarue_Tse_21} for the needed adaptations (see in particular the regularity assumptions (Reg) and (Lip) in~\cite[Section~2.3.2]{Delarue_Tse_21}).

\subsection{Strategy and plan of the paper}
Our starting point is the derivation of new estimates for quantities of the form
\[A^m(t) \,:=\, \sup_\varphi\mathbb{E}\bigg[\Big|\int_{\Xd} \varphi (\mu^N_t - \mu_t) \Big|^m\bigg],\]
where the supremum runs over a suitable class of smooth test functions $\varphi$. Analyzing the dynamics of those quantities and appealing to the ergodic estimates of the linearized mean-field equation, we are led to proving roughly that for all $t \ge 0$,
\begin{equation}\label{eq:prop-fund}
A^m(t) \,\le\, \Big(\frac{Cm}{N}\Big)^\frac{m}{2} + C^me^{-\lambda_0 m t} A^m(0),
\end{equation}
see Proposition~\ref{prop:ito}.
For the Langevin dynamics, as ergodic estimates only hold in weighted Sobolev spaces, we lose moments of the particle dynamics in the estimates, which we manage to control by means of suitable concentration estimates.

When reformulating~$A^m$ in terms of marginals or correlations, this bound~\eqref{eq:prop-fund} leads only to suboptimal estimates. In particular, as $t\uparrow\infty$, we only deduce a bound $O(N^{-1/2})$ instead of $O(N^{-1})$ for the error in the propagation of chaos, and a bound $O(N^{-m/2})$ instead of $O(N^{1-m})$ for the $m$-particle correlation.
In order to recover the optimal scalings, our next main ingredient is the BBGKY hierarchy: more precisely, appealing again to ergodic properties of the linearized mean-field equation, we show that the optimal scalings can be rigorously extracted from the BBGKY hierarchy, noting that the latter can be truncated using the suboptimal estimates obtained from~\eqref{eq:prop-fund}.

We emphasize that relatively few ingredients are used in this approach: the basic estimate~\eqref{eq:prop-fund} relies solely on ergodic estimates for the linearized mean-field equation, on concentration properties, and on some regularity of the interaction kernel. The strategy appears versatile and should be applicable to other mean-field systems, including the Kac model mentioned above, which is postponed to a future work.

\subsubsection*{Plan of the paper}
Section~\ref{sec:preliminary} is devoted to the statement of ergodic estimates, the proof of concentration bounds for moments of the particle dynamics, and most importantly the proof of~\eqref{eq:prop-fund}, cf.~Proposition~\ref{prop:ito}. Next, Sections~\ref{sec:proof_thm_main} and~\ref{sec:correl} are devoted to the proof of Theorems~\ref{th:creation} and~\ref{th:estim-GNm} as a consequence of~\eqref{eq:prop-fund} together with BBGKY estimates.

\section{Preliminary results}
\label{sec:preliminary}
As correlations are driven by the linearized mean-field operator, a key ingredient to understand the damping of initial correlations is naturally given by ergodic estimates for linearized mean field.
Recall that $\mu$ stands for the solution of the mean-field equation~\eqref{eq:VFP} with initial data $\mu_\circ$. This equation admits a unique steady state $M$ for $0<\kappa\ll1$ small enough, which can be characterized as the unique solution to the fixed-point Gibbs equation
\begin{equation*}
M(x,v) = c_M \exp \Big[ -\beta\Big(\tfrac12 |v|^2 + A(x) + \kappa W \ast M(x) \Big) \Big], \qquad (x,v) \in \Xd,
\end{equation*}
where $c_M$ is the normalizing constant such that $\int_{\Xd} M = 1$. This fixed-point equation has indeed a unique solution when $\kappa \beta \|W\|_{L^\infty(\R^d)} < 1$.
For $h\in C^\infty_c(\Xd)$ and $t\ge0$, we consider the (non-autonomous) backward dual linearized mean-field evolution,
\begin{equation}\label{eq:def-Ust}
\left\{\begin{array}{l}
\partial_s U_{s,t}h+v\cdot\nabla_xU_{s,t}h=\Big(-\tfrac12\triangle_v+\tfrac12\beta v\cdot\nabla_v+(\nabla A+\kappa\nabla W\ast\mu_s)\cdot\nabla_v\Big)U_{s,t}h,\qquad s\le t,\\
U_{s,t}h|_{s=t}=h.
\end{array}\right.
\end{equation}
In~\cite[Proposition~7.1]{BD_2024}, we obtained the following ergodic estimates in weighted Sobolev spaces. This is a consequence of hypocoercivity techniques together with a suitable use of the enlargement theory of Mischler and Mouhot~\cite{Mischler_2016}.
(In~\cite{BD_2024}, an estimate is obtained instead for the forward primal linearized evolution, but it is equivalent to this one by duality.)

\begin{lem}[Dual ergodic estimates, \cite{BD_2024}]\label{lem:ergodic}
There exist $\kappa_0,\lambda_0>0$ (depending on $d,\beta,a$, $\|W\|_{W^{1,\infty}(\R^d)}$) such that given $\kappa\in[0,\kappa_0]$ we have for all $r\ge0$, $1<q\le2$, $0<p\le1$ with $pq'\gg1$ large enough (only depending on $d,\beta,a$), $t\ge s\ge0$, and $h\in C^\infty_c(\Xd)$,
\begin{equation*}
\Big\|\langle z\rangle^{-p} \Big(U_{s,t}h-\int_\Xd hM\Big)\Big\|_{W^{r,q'}(\Xd)}\,\lesssim\,e^{-\lambda_0p(t-s)}\|\langle z\rangle^{-p}h\|_{W^{r,q'}(\Xd)},
\end{equation*}
where the multiplicative constant only depends on $d,\beta,p,q,r,a$, $\|W\|_{W^{r+d+1,\infty}(\R^d)}$, and $Q_2(\mu_\circ)$.
\end{lem}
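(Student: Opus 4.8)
The plan is to obtain this as a consequence of the hypocoercivity-plus-enlargement analysis of \cite[Proposition~7.1]{BD_2024}. In fact there are two routes. The short one: since the forward primal linearized mean-field evolution is the Lebesgue-adjoint of the backward dual evolution in \eqref{eq:def-Ust}, and since the weighted space $W_p^{-r,q}(\Xd)$ is (by \eqref{eq:definition_W_tilde_1}) the dual of the weighted space built on $W^{r,q'}(\Xd)$ with weight $\langle z\rangle^{-p}$, the stated dual estimate follows from the primal ergodic estimate of \cite{BD_2024} by a direct duality argument, matching weights; the subtracted constant $\int_\Xd hM$ corresponds on the primal side to projecting onto the steady state $M$. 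I would, however, spell out the direct route as well. Since the primal generator in \eqref{eq:def-Ust} is in divergence form, its Lebesgue-adjoint annihilates constants, so $U_{s,t}$ maps constants to constants; writing $h=\int_\Xd hM+\big(h-\int_\Xd hM\big)$, it then suffices to prove exponential decay of $U_{s,t}$ on the subspace of functions with vanishing $M$-average. The idea is to freeze the drift at the steady state: write the generator as $\mathcal L^*_s=\mathcal L^*_M+\kappa\,(\nabla W\ast(\mu_s-M))\cdot\nabla_v$, where $\mathcal L^*_M$ is the autonomous dual generator linearized at $M$, and treat the second term perturbatively, using that $\mu_s\to M$ exponentially fast for $\kappa\ll1$ (itself a hypocoercivity statement for the mean-field equation).

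For the autonomous operator $\mathcal L^*_M$: when $\kappa=0$ it is the backward Kolmogorov operator of the Langevin dynamics with quadratic confinement $A$, with explicit Gaussian equilibrium $M_0$, and classical hypocoercivity (following Villani, or Dolbeault--Mouhot--Schmeiser) gives exponential decay to the $M_0$-mean in the weighted space $H^1(M_0^{-1})$. The enlargement theory of Mischler--Mouhot \cite{Mischler_2016} (equivalently, a Gualdani--Mischler--Mouhot type factorization of the resolvent) then upgrades this to exponential decay in the larger weighted Sobolev spaces $W^{r,q'}(\Xd)$ with polynomial weight $\langle z\rangle^{-p}$, at the price of a decay rate proportional to the weight strength — whence the rate $\lambda_0 p$ — and this is precisely where the requirement $pq'\gg1$ enters, since it guarantees hypodissipativity of the remainder part of the splitting in the enlarged space together with the needed moment production. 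For $0<\kappa\ll1$, the true steady state $M$ is a small, smooth, bounded perturbation of $M_0$, and the extra drift term $\kappa\,(\nabla W\ast M)\cdot\nabla_v$ is a bounded smooth operator with small coefficient, so it can be absorbed directly into the perturbation argument, retaining a (slightly reduced) decay rate of the same form $\lambda_0 p$.

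It remains to deal with the genuinely non-autonomous correction. Expanding $U_{s,t}$ by Duhamel's formula against the autonomous dual semigroup generated by $\mathcal L^*_M$, applying the decay established above, using $\|\nabla W\ast(\mu_s-M)\|_{W^{r,\infty}(\R^d)}\lesssim\|\mu_s-M\|\lesssim e^{-cs}$ (from the regularity of $W$ and the exponential relaxation of the mean-field flow), and closing with a Gronwall argument, one obtains the claimed estimate for $\kappa$ small enough. I expect the main obstacle to be that the autonomous semigroup decays only on the $M$-mean-zero subspace, whereas the perturbation $\kappa\,(\nabla W\ast(\mu_s-M))\cdot\nabla_v$ does not preserve that subspace, so one must control the leakage onto the constant mode. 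This is handled by observing that $M$ is an \emph{approximate} steady state along the frozen flow, $\mathcal L[\mu_s]M=\kappa\,\Div_v\big((\nabla W\ast(\mu_s-M))M\big)=O(\kappa e^{-cs})$, which combined with the $\Div_v$ structure of the perturbation keeps the constant-mode component bounded and in fact also exponentially small; propagating the weight $\langle z\rangle^{-p}$ through the Duhamel terms is then routine bookkeeping.
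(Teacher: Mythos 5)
Your first (short) route is exactly what the paper does: Lemma~\ref{lem:ergodic} is not proved here but quoted from \cite[Proposition~7.1]{BD_2024}, with the paper noting only that the primal estimate of that reference is equivalent to the stated dual one by duality. Your additional sketch (hypocoercivity for the frozen operator, Mischler--Mouhot enlargement to the weighted spaces with rate $\lambda_0 p$ under $pq'\gg1$, and a Duhamel/Gr\"onwall treatment of the non-autonomous part for $\kappa\ll1$) matches the method the paper attributes to \cite{BD_2024}, so the approach is essentially the same.
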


We emphasize that the above ergodic estimates are not known to hold in $W^{r,\infty}(\Xd)$, but only in weighted spaces $\langle z\rangle^{p} W^{r,q'}(\Xd)$ with $pq'\gg1$.
As we work in such weighted spaces, we will need to control moments of the particle dynamics. More precisely, we will appeal to the following concentration bounds for moments of the empirical measure $\mu^N$ given in~\eqref{eq:def_muNt}.

\begin{lem}[Moment estimates]\label{lem:mom}
Let $\kappa_0 > 0$ be as in Lemma~\ref{lem:ergodic}.
There exist $\lambda_0>0$ (only depending on~$\beta,a$) and $C_0>0$ (further depending on $d,\|W\|_{W^{1,\infty}(\R^d)}$), such that for all $2\le m\le N$, $0 < p \le 1$, $L\ge C_0Q_{2p}(\mu_\circ)$, and~$t\ge0$ we have
\begin{equation}\label{eq:concentr}
\E\big[(Q_{2p}(\mu_t^N)-L)_+^m\big]\, \le \, 
\Big(\frac{C_0Lm}N\Big)^\frac{m}{2}
+C_0^me^{-\lambda_0pm t}\,\E\bigg[\Big|\int_{\Xd} \langle z\rangle^{2p}(\mu_0^N - \mu_\circ)\Big|^m\bigg].
\end{equation}
In addition, for all $r\ge0$,
\begin{equation}\label{eq:mom}
\E[Q_{r}(\mu_t^N)]\,=\,Q_r(F_t^{N,1})\,\le\,C_0(C_0r)^\frac r2+C_0^re^{-\lambda_0rt}Q_r(F^{N,1}_\circ).
\end{equation}
\end{lem}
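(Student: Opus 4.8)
The plan is to derive both estimates from It\^o's formula applied to well-chosen functionals of the empirical measure, the engine being a hypocoercive Lyapunov function for a single confined particle.

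\emph{Step~1 (Lyapunov function and It\^o computation).} Since $\nabla A(x)=ax$, the confined Langevin generator admits the Lyapunov function $\Psi(x,v):=1+\tfrac12|v|^2+\tfrac a2|x|^2+\epsilon\,x\cdot v$, with $\epsilon>0$ small enough depending on $\beta,a$: one checks $\Psi\ge1$, $\Psi\simeq\langle z\rangle^2$ with constants depending only on $\beta,a$, $|\nabla_v\Psi|^2\lesssim\Psi$, and, for the full generator $\mathcal L^i$ of~\eqref{eq:Langevin},
\[ \mathcal L^i\Psi\,\le\,-\lambda_1\Psi+C_1, \]
where $\lambda_1$ depends only on $\beta,a$ and $C_1$ on $d,\beta,a,\|W\|_{W^{1,\infty}(\R^d)}$; here the interaction contribution $-\tfrac\kappa N\sum_j\nabla W(x_i-x_j)\cdot\nabla_{v_i}\Psi$ is of lower order, being bounded by $\kappa\|W\|_{W^{1,\infty}}|\nabla_{v_i}\Psi|\lesssim\langle z_i\rangle\lesssim\Psi^{1/2}$ and hence absorbed into $\tfrac{\lambda_1}2\Psi$ plus a constant. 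It\^o's formula would then give, for $\theta\in(0,1]$ and $q_i(t):=\Psi(Z^{i,N}_t)^\theta$,
\[ \ddr q_i\,\le\,\big(-\lambda_1\theta\,q_i+C_1\theta\big)\ddr t+\ddr m_i, \]
since the It\^o correction $\tfrac12\theta(\theta-1)\Psi^{\theta-2}|\nabla_v\Psi|^2$ is nonpositive and $C_1\theta\,\Psi^{\theta-1}\le C_1\theta$ as $\Psi\ge1$; the martingale $m_i$ satisfies $\ddr\langle m_i\rangle\lesssim\theta^2\Psi^{2\theta-1}\ddr t\lesssim\theta^2 q_i\ddr t$ (using $2\theta-1\le\theta$) and, crucially, $\ddr\langle m_i,m_j\rangle=0$ for $i\ne j$ by independence of the $B^i$. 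For $\theta>1$, as needed in~\eqref{eq:mom}, the same holds after absorbing the now positive $\Psi^{\theta-1}$ terms by Young's inequality, at the cost of replacing $C_1\theta$ by a constant of size $\lesssim(C\theta)^\theta$.

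\emph{Step~2 (proof of~\eqref{eq:mom}).} Taking $\theta=r/2$, summing over $i$, dividing by $N$ and passing to expectations --- the stochastic integral being a genuine martingale after stopping at the exit time of a large ball, then letting the radius go to infinity by Fatou --- one gets $\tfrac{\ddr}{\ddr t}\E\big[\tfrac1N\sum_i\Psi(Z^i_t)^{r/2}\big]\le-\tfrac{\lambda_1 r}2\,\E[\,\cdot\,]+C(Cr)^{r/2}$, and Gr\"onwall, combined with $\Psi^{r/2}\simeq\langle z\rangle^r$ (with constants $\le C^r$) and $\E[Q_r(\mu^N_t)]=Q_r(F^{N,1}_t)$, yields~\eqref{eq:mom} with $\lambda_0=\lambda_1/2$.

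\emph{Step~3 (proof of~\eqref{eq:concentr}).} Now take $\theta=p$ and set $Y_t:=\tfrac1N\sum_i\Psi(Z^i_t)^p\simeq Q_{2p}(\mu^N_t)$; summing the bound of Step~1,
\[ \ddr Y_t\,\le\,(-\lambda_1 p\,Y_t+C_1 p)\,\ddr t+\ddr N_t,\qquad \ddr\langle N\rangle_t\,\lesssim\,\tfrac{p^2}N\,Y_t\,\ddr t. \]
Choosing $C_0$ large (depending on $d,\beta,a,\|W\|_{W^{1,\infty}}$) we may assume $L\ge C_0\ge C_1/\lambda_1$, which is compatible with the hypothesis $L\ge C_0Q_{2p}(\mu_\circ)$ since $Q_{2p}(\mu_\circ)\ge1$; then on $\{Y_t>L\}$ the drift of $Y_t$ is $\le-\lambda_1 p\,(Y_t-L)$. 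Applying It\^o (Meyer--It\^o when $m=2$) to $(Y_t-L)_+^m$, using $Y_t=(Y_t-L)+L$ on $\{Y_t>L\}$, and taking expectations, $a_m(t):=\E[(Y_t-L)_+^m]$ obeys, for $m\ge2$,
\[ \tfrac{\ddr}{\ddr t}a_m\,\le\,-\lambda_1 pm\,a_m+\tfrac{C\,p^2m^2}N\,\big(a_{m-1}+L\,a_{m-2}\big). \]
For $m\ge3$, Jensen's inequality gives $a_{m-1}\le a_m^{1-1/m}$ and $a_{m-2}\le a_m^{1-2/m}$, after which Young's inequality bounds the right-hand side by $-\tfrac{\lambda_1 pm}2\,a_m+\tfrac{C\,p^2m^2L}N\,a_m^{1-2/m}$; the Bernoulli change of unknown $v:=a_m^{2/m}$ then linearizes this into $v'\le-\lambda_1 p\,v+\tfrac{C\,p^2mL}N$, whence $v(t)\le e^{-\lambda_1 pt}v(0)+\tfrac{C\,pmL}{\lambda_1 N}$ and
\[ a_m(t)\,\lesssim_m\,e^{-\frac{\lambda_1}2\,pmt}\,a_m(0)+\Big(\tfrac{C\,pmL}{\lambda_1 N}\Big)^{m/2}. \]
The case $m=2$ is analogous, via $a_1\le a_2^{1/2}$, $a_0\le1$ and one step of Young's inequality. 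Since $L\ge C_0Q_{2p}(\mu_\circ)$ with $C_0$ large, one has $(Y_0-L)_+\lesssim\big|\int_\Xd\langle z\rangle^{2p}(\mu^N_0-\mu_\circ)\big|$ (because $\langle z\rangle^{2p}$ and $\Psi^p$ are comparable with $p$-independent constants), so $a_m(0)\lesssim_m\E\big[\big|\int_\Xd\langle z\rangle^{2p}(\mu^N_0-\mu_\circ)\big|^m\big]$; together with the passage from $Y_t$ back to $Q_{2p}(\mu^N_t)$ and the choice $\lambda_0=\lambda_1/2$, absorbing all constants into $C_0$ yields~\eqref{eq:concentr}.

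\emph{Expected main difficulty.} The crux is the uniformity in Step~3: $C_0$ must depend neither on $p$ nor on $m$, for all $2\le m\le N$. Three features make this possible: (i) the decay rate and the additive drift constant both scale like $p$, so that the equilibrium level of $Q_{2p}(\mu^N_t)$ is a genuine constant, dominated by $L$ precisely because $Q_{2p}(\mu_\circ)\ge1$; (ii) the factor $p^2$ in $\ddr\langle N\rangle_t$ produces the $p^2m^2$ in the source term, exactly compensating the factor $1/(\lambda_1 pm)$ coming from the integration of $e^{-\lambda_1 pm(t-s)}$; (iii) the Bernoulli substitution turns the resulting Bernoulli-type differential inequality into a linear one, sparing an induction over $m$ that would otherwise accumulate lower-order moments of the initial fluctuation. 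A last, routine point is the rigorous justification of It\^o's formula and of the vanishing of the martingale expectations through a localization argument, using~\eqref{eq:mom} for the a priori moment control.
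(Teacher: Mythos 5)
Your proposal is correct and follows essentially the same route as the paper: the same quadratic Lyapunov function with the $x\cdot v$ cross term (your $\Psi$ is the paper's $R^{i,N}$ up to normalization), It\^o's formula for $\Psi^\theta$ with the sign of $\theta(\theta-1)$ and $\Psi\ge1$, the generalized It\^o formula for $(\,\cdot-L)_+^m$, and the same treatment of the initial term via $L\ge C_0Q_{2p}(\mu_\circ)$. The only difference is cosmetic bookkeeping in the final differential inequality --- you keep the $a_{m-1},a_{m-2}$ terms and linearize via Jensen plus the Bernoulli substitution $v=a_m^{2/m}$, whereas the paper absorbs the $(G^N_t-L)_+^{m-1}$ term into the drift and applies Young's inequality pointwise to get a linear ODE directly --- and both give the stated bounds with constants of the admissible form $C_0^m$.
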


\begin{proof}
We split the proof into two steps, starting with the proof of the concentration estimate~\eqref{eq:concentr}, before turning to the more classical proof of the moment bound~\eqref{eq:mom}.

\medskip\noindent
\step1 Proof of~\eqref{eq:concentr}.\\
Let $0<p\le1$ be fixed. In the spirit of~\cite{Bolley_2010}, we consider the random process
\begin{equation*}
G_{t}^{N}\,:=\,\frac1N\sum_{i=1}^N(R_t^{i,N})^p,\qquad R_t^{i,N}:=1+ a|X_{t}^{i,N}|^2+|V_{t}^{i,N}|^2+\eta X_{t}^{i,N}\cdot V_{t}^{i,N},
\end{equation*}
for some $\eta\in(0,2\sqrt{a})$ to be chosen later on. Note that this range of $\eta$ ensures
\begin{equation}\label{eq:GtN-equiv}
R_t^{i,N}\,\simeq_{a,\eta}\,\langle Z_t^{i,N}\rangle^2,\qquad G_t^{N}\,\simeq_{a,\eta}\,Q_{2p}(\mu_t^N).
\end{equation}
The particle dynamics \eqref{eq:Langevin} and It\^o's formula  yield
\begin{multline*}
\ddr R_t^{i,N}\,=\,
-\Big(a\eta|X^{i,N}_t|^2
+(\beta-\eta)|V_t^{i,N}|^2
+\frac{\eta\beta}2X_t^{i,N}\cdot V_t^{i,N}-1\Big)\ddr t\\
-\frac\kappa{N}\sum_{j=1}^N(2V_t^{i,N}+\eta X_t^{i,N})\cdot\nabla W(X_t^{i,N}-X_t^{j,N})\ddr t
+(2V_t^{i,N}+\eta X_t^{i,N})\cdot \ddr B^i_t.
\end{multline*}
Provided that $0<\eta\ll1$ is chosen small enough (only depending on $\beta,a$), we can find $\lambda_0>0$ only depending on~$\beta,a$ such that
\begin{equation}\label{eq:ineq-dR}
\ddr R_t^{i,N}\,\le\, - 2\lambda_0 R_t^{i,N} \ddr t + C_0 \ddr t + (2 V^{i,N}_t + \eta X^{i,N}_t)\cdot \ddr B^i_t,
\end{equation}
where henceforth $C_0$ stands for a positive constant only depending on $d,\beta,a,\|W\|_{W^{1,\infty}(\R^d)}$, the value of which may change from line to line.
By It\^o's formula, using $p(p-1) \le 0$ and $(R_t^{i,N})^{p-1}\le1$, we deduce
\begin{eqnarray*}
\ddr (R_t^{i,N})^p&\le& - 2\lambda_0 p (R_t^{i,N})^p\ddr t  + C_0 p (R_t^{i,N})^{p-1}\ddr t + \tfrac{1}{2}p(p-1)(R_t^{i,N})^{p-2}|2 V^{i,N}_t + \eta X^{i,N}_t|^2\ddr t\\
&&+\, p(R_t^{i,N})^{p-1}(2 V^{i,N}_t + \eta X^{i,N}_t)\cdot \ddr B^i_t\\[1mm]
&\le&- 2\lambda_0 p (R_t^{i,N})^p \ddr t + C_0 p\,\ddr t + p(R_t^{i,N})^{p-1}(2 V^{i,N}_t + \eta X^{i,N}_t)\cdot \ddr B^i_t,
\end{eqnarray*}
and thus, summing over $i$ and recalling the definition of $G_t^N$,
\begin{align*}
\ddr G_{t}^{N} \leq -2\lambda_0 p G_{t}^{N} \ddr t + C_0 p\,\ddr t + \frac{p}{N}\sum_{i=1}^N (R_t^{i,N})^{p-1}(2 V^{i,N}_t + \eta X^{i,N}_t)\cdot \ddr B^i_t.
\end{align*}
Further using It\^o's generalized formula for convex functions, bounding classically the expectation of the local time using the quadratic variation,
we get for all $L\ge0$ and $2\le m\le N$,
\begin{multline*}
\frac{\ddr}{\ddr t}\E\big[(G_t^{N}-L)_+^m\big]\,\le\,
\E\bigg[-2\lambda_0 p m(G_t^{N}-L)_+^{m-1}G_t^{N} + C_0pm(G_t^{N}-L)_+^{m-1} \\
\hspace{2cm}+\frac{p^2 m(m-1)}{2N^2}(G_t^{N}-L)_+^{m-2}\sum_{i=1}^N (R_t^{i,N})^{2p-2}|2 V^{i,N}_t + \eta X^{i,N}_t|^2\bigg] \\
\,\le\, \E\bigg[-2\lambda_0 p m(G_t^{N}-L)_+^{m-1}G_t^{N} + C_0pm(G_t^{N}-L)_+^{m-1} 
+\frac{C_0p m^2}N(G_t^{N}-L)_+^{m-2}G_t^{N}\bigg],
\end{multline*}
hence, decomposing $G_t^N=G_t^N-L+L$, we get
for $L\ge C_0/\lambda_0$,
\begin{align*}
\frac{\ddr}{\ddr t}\,\E\big[(G_t^{N}-L)_+^m\big]\,\le\, \E\bigg[-2\lambda_0 p m(G_t^{N}-L)_+^{m} + \frac{C_0Lpm^2}{N}(G_t^{N}-L)_+^{m-2}\bigg].
\end{align*}
By Young's inequality, this entails
\begin{align*}
\frac{\ddr}{\ddr t}\E\big[(G_t^{N}-L)_+^m\big]\, \le \, -\lambda_0 p m \E[(G_t^{N}-L)_+^m] +2\lambda_0 p\Big(\frac{C_0Lm}{\lambda_0 N}\Big)^\frac{m}{2},
\end{align*}
and thus, by Gr\"onwall's inequality,
\begin{align*}
\E\big[(G_t^{N}-L)_+^m\big]\, \le \, \Big(\frac{C_0Lm}{\lambda_0N}\Big)^\frac{m}{2}+e^{-\lambda_0 p mt}\,\E\big[(G_0^{N}-L)_+^m\big].
\end{align*}
Noting that
\begin{equation*}
\E\big[(G_0^{N}-L)_+^m\big] \,\le\,\E\bigg[\Big(C_0\int_{\Xd}\langle z\rangle^{2p} \mu_0^N - L\Big)^m_+\bigg] 
\,=\,C_0^m\,\E\bigg[\Big(\int_{\Xd}\langle z\rangle^{2p} (\mu_0^N - \mu_\circ) + Q_{2p}(\mu_\circ) - C_0^{-1}L\Big)^m_+\bigg],
\end{equation*}
and further imposing $L\ge C_0Q_{2p}(\mu_\circ)$, we conclude
\begin{equation*}
\E\big[(G_t^{N}-L)_+^m\big]\, \le \, \Big(\frac{C_0Lm}{\lambda_0N}\Big)^\frac{m}{2}+C_0^me^{-\lambda_0 p m t}\,\E\bigg[\Big|\int_{\Xd} \langle z\rangle^{2p}(\mu_0^N - \mu_\circ)\Big|^m\bigg],
\end{equation*}
and the conclusion~\eqref{eq:concentr} follows.

\medskip\noindent
\step2 Proof of~\eqref{eq:mom}.\\
Using the same notation as in Step~1, and starting from~\eqref{eq:ineq-dR},
we find by It\^o's formula, for any $r>0$,
\begin{equation}\label{eq:ito-mom0}
\frac{\ddr}{\ddr t}\E\big[(R_t^{1,N})^r\big]\,\le\,- 2\lambda_0 r \E\big[(R_t^{1,N})^r\big]+C_0 r \E\big[(R_t^{1,N})^{r-1}\big]+\tfrac{1}{2}r(r-1)\E\big[(R_t^{1,N})^{r-2}|2 V^{1,N}_t + \eta X^{1,N}_t|^2\big].
\end{equation}
In case $r\ge1$, using $|2 V^{1,N}_t + \eta X^{1,N}_t|^2\lesssim R_t^{1,N}$ and Young's inequality, we find
\begin{equation*}
\frac{\ddr}{\ddr t}\E\big[(R_t^{1,N})^r\big]\,\le\,- 2\lambda_0 r \E\big[(R_t^{1,N})^r\big]+C_0 r^2 \E\big[(R_t^{1,N})^{r-1}\big]\,\le\,-\lambda_0 r \E\big[(R_t^{1,N})^r\big]+\lambda_0 \Big(\frac{C_0r}{\lambda_0}\Big)^r.
\end{equation*}
Hence, by Gr\"onwall's inequality, recalling $r \ge 1$
\begin{equation*}
\E\big[(R_t^{1,N})^r\big]\,\le\, \Big(\frac{C_0r}{\lambda_0}\Big)^r+e^{-\lambda_0rt}\,\E\big[(R_0^{1,N})^r\big],
\end{equation*}
and the conclusion~\eqref{eq:mom} then follows (up to renaming the constants), using
\[\E\big[(R_t^{1,N})^r \big]\,\simeq_r\,\E\big[Q_{2r}(\mu_t^N)\big]\,=\,\E\Big[\int_\Xd\langle z\rangle^{2r}\mu_t^N\Big]\,=\,Q_{2r}(F_t^{N,1}).\]
In case $0<r\le1$, rather using $r(r-1)\le0$ and $(R_t^{1,N})^{r-1}\le1$ in~\eqref{eq:ito-mom0}, we find
\begin{equation*}
\frac{\ddr}{\ddr t}\E\big[(R_t^{1,N})^r\big]\,\le\,- 2\lambda_0 r \E\big[(R_t^{1,N})^r\big]+C_0 r,
\end{equation*}
and the conclusion follows similarly.
\end{proof}

With the above two preliminary lemmas at hand, we may now turn to the proof of the following moment estimates on the mean-field approximation error, which we deduce from a suitable application of It\^o's formula together with a Gr\"onwall argument.
The proof can be compared with some related computations in~\cite{Cornalba-Fischer-Raithel-2023}.

\begin{prop}\label{prop:ito}
Consider the following quantities measuring the mean-field approximation error, for any $1<q\le2$, $0<p\le1$, $m\ge1$, and~$t\ge0$,
\begin{equation}\label{eq:def-ANm}
A^{N,m}_{p,q}(t)\,:=\,\sup\bigg\{\E\bigg[\Big|\int_\Xd\varphi(\mu_t^N-\mu_t)\Big|^{m}\bigg]~:~\varphi\in C^\infty_c(\Xd),~\|\langle z\rangle^{-p}\varphi\|_{W^{2,q'}(\Xd)}\le1\bigg\}.
\end{equation}
There exists $\lambda_0>0$ (only depending on $d,\beta,a,\|W\|_{W^{1,\infty}(\R^d)}$)
such that the following holds:
given $1<q\le2$ and $0<p\le1$ with $pq'\gg1$ large enough (only depending on $d,\beta,a$), there exist $\kappa_0,C_0>0$ (further depending on $p,q$, $Q_2(\mu_\circ)$, and $\|W\|_{W^{d+3,\infty}\cap H^{s}(\R^d)}$ for $s>\frac{d}{2}+3$) such that given $\kappa\in[0,\kappa_0]$ we have for all $2\le m\le N$ and $t\ge0$,
\begin{equation*}
A_{p,q}^{N,m}(t) \,\le\,
\Big(\frac{C_0m}N\Big)^\frac m2
+C_0^m e^{-\lambda_0p m t}A_{3p,q}^{N,m}(0).
\end{equation*}
\end{prop}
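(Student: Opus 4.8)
The plan is to test the mean-field error against a fixed admissible $\varphi$, track the dynamics of $\E[|\int_\Xd \varphi(\mu^N_t-\mu_t)|^m]$ via Itô's formula, and close a Grönwall inequality. The crucial structural point is that the "drift" part of this quantity is governed precisely by the backward dual linearized mean-field evolution $U_{s,t}$ of~\eqref{eq:def-Ust}, whose decay is quantified by Lemma~\ref{lem:ergodic}; the noise and the nonlinearity produce lower-order contributions. Concretely, I would introduce the ``duality variable'' $\psi_s := U_{s,t}\varphi - \int_\Xd\varphi\,M$ and consider the martingale-type process $s\mapsto \int_\Xd \psi_s(\mu^N_s-\mu_s)$ for $s\in[0,t]$. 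Applying Itô's formula to $X_s:=\int_\Xd\psi_s(\mu^N_s-\mu_s)$ and then to $|X_s|^m$, the term in which $\psi_s$ is differentiated in $s$ cancels against the transport/Fokker--Planck part of the empirical-measure dynamics up to (i) a nonlinear remainder proportional to $\kappa$ coming from the difference between $\nabla W\ast\mu^N_s$ and $\nabla W\ast\mu_s$ acting on the velocity derivative, and (ii) a diffusion bracket term of order $N^{-1}$ from the independent Brownian motions $\{B^i\}$. The local time contribution produced by $|\cdot|^m$ ($m\ge2$) is absorbed using its quadratic variation, exactly as in the proof of Lemma~\ref{lem:mom}.

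After collecting terms I expect an inequality, for $t$ fixed and $s\in[0,t]$, of the schematic form
\begin{equation*}
\frac{\ddr}{\ddr s}\,\E\big[|X_s|^m\big]\,\le\, \Big(\tfrac{Cm}{N}\Big)^{\!m/2}\|\langle z\rangle^{-p}\psi_s\|_{W^{2,q'}(\Xd)}^{m} + C\kappa\,m\,\E\Big[\big|X_s\big|^{m-1}\Big(\!\int_\Xd\langle z\rangle^{Cp}|\mu^N_s-\mu_s|\Big)\|\langle z\rangle^{-p}\psi_s\|_{W^{2,q'}}\Big] + (\text{nonneg.~absorbable}),
\end{equation*}
where the second term comes from the nonlinearity: the extra factor $\int_\Xd\langle z\rangle^{Cp}|\mu^N_s-\mu_s|$ is the price of passing through weighted spaces, and this is the point where Lemma~\ref{lem:mom} (moment concentration for $\mu^N_s$) must be invoked — together with $\E[Q_r(\mu^N_s)]\lesssim_r1$ at appropriate uniform moments — to bound $\int_\Xd\langle z\rangle^{Cp}|\mu^N_s-\mu_s|$ by $\int_\Xd\langle z\rangle^{Cp}\mu^N_s + Q_{Cp}(\mu_s)$ and keep everything at the right order while paying only $Q_{2p}$-type weights (which is why the norm on the right of the conclusion uses $3p$ rather than $p$). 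Using the ergodic bound $\|\langle z\rangle^{-p}\psi_s\|_{W^{2,q'}}\lesssim e^{-\lambda_0 p(t-s)}\|\langle z\rangle^{-p}\varphi\|_{W^{2,q'}}\le e^{-\lambda_0 p(t-s)}$ from Lemma~\ref{lem:ergodic}, and bounding the nonlinear cross term by Young's inequality ($|X_s|^{m-1}\cdot(\cdots)\le \e|X_s|^m + C_\e(\cdots)^m$ with $\e\kappa\ll\lambda_0 p$), I obtain a closed differential inequality
\begin{equation*}
\frac{\ddr}{\ddr s}\,\E\big[|X_s|^m\big]\,\le\,-\lambda_0 p m\,\E\big[|X_s|^m\big] + C_0 e^{-\lambda_0 p m(t-s)}\Big(\tfrac{m}{N}\Big)^{m/2} + C_0 \kappa\, e^{-\lambda_0 p m(t-s)}\,\big(\text{bounded moment terms}\big),
\end{equation*}
valid for $\kappa\le\kappa_0$ small. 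Integrating this from $s=0$ to $s=t$ and noting that $X_t=\int_\Xd\varphi(\mu^N_t-\mu_t)$ while $X_0=\int_\Xd\psi_0(\mu^N_0-\mu_0)$ with $\|\langle z\rangle^{-p}\psi_0\|_{W^{2,q'}}\lesssim\|\langle z\rangle^{-3p}\varphi\|_{W^{2,q'}}$ after reallocating weights (hence $\E[|X_0|^m]\le C_0^m A^{N,m}_{3p,q}(0)$), and that the remaining moment terms contribute only $(C_0m/N)^{m/2}$ after the concentration estimate, gives
\begin{equation*}
\E\Big[\Big|\int_\Xd\varphi(\mu^N_t-\mu_t)\Big|^m\Big]\,\le\,\Big(\tfrac{C_0m}{N}\Big)^{m/2} + C_0^m e^{-\lambda_0 p m t} A^{N,m}_{3p,q}(0).
\end{equation*}
Taking the supremum over admissible $\varphi$ yields the claimed bound on $A^{N,m}_{p,q}(t)$.

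The main obstacle I anticipate is not the Itô computation itself but the bookkeeping of weights: because Lemma~\ref{lem:ergodic} only holds in the weighted spaces $\langle z\rangle^p W^{2,q'}$, every appearance of $\nabla W\ast(\mu^N_s-\mu_s)$ in the nonlinearity forces the introduction of an auxiliary weight $\langle z\rangle^{Cp}$, and one must check that $Cp\le 2p\cdot(\text{something})$ so that the surviving moments are controlled by Lemma~\ref{lem:mom} with only polynomially-in-$m$ constants and with the single weight jump $p\rightsquigarrow 3p$ recorded in the statement — this is precisely what dictates the hypothesis $p\le\frac16$ in Theorem~\ref{th:creation} downstream. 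A secondary technical point is justifying the Itô calculus rigorously for the non-autonomous, non-smooth-in-$s$ functional $X_s$ (one smooths $\varphi$ and passes to the limit, using that $U_{s,t}$ maps $C^\infty_c$ into sufficiently regular functions by parabolic regularity of~\eqref{eq:def-Ust}), and controlling the $\kappa$-dependence of all constants so that the smallness threshold $\kappa_0$ can be chosen uniformly in $m$ and $N$.
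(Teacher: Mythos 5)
Your overall scheme (testing against the backward dual flow $U_{s,t}$, It\^o's formula, concentration, Gr\"onwall) is indeed the paper's, but your treatment of the interaction term has a genuine gap that breaks the stated bound. After the cancellation of the linear drift, the nonlinear contribution is $\kappa m\,\E\big[|X_s|^{m-1}\,\big|\int_\Xd \mu^N_s\,\nabla_v(U_{s,t}\varphi)\cdot\nabla W\ast(\mu^N_s-\mu_s)\big|\big]$, and you propose to bound the inner factor by $\int_\Xd\langle z\rangle^{Cp}|\mu^N_s-\mu_s|\lesssim Q_{Cp}(\mu^N_s)+Q_{Cp}(\mu_s)$, which is $O(1)$: this discards the smallness of the mean-field error carried by $\nabla W\ast(\mu^N_s-\mu_s)$, so the term becomes only \emph{linear} in the error. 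After Young's inequality the leftover source is of size $\kappa\,C_\e\,\E[Y_s^m]\,e^{-\lambda_0pm(t-s)}$ with $\E[Y_s^m]\simeq C^m$ (Lemma~\ref{lem:mom} only makes the excess $(Q_{2p}(\mu^N_s)-L)_+$ small, not $Q_{2p}(\mu^N_s)$ itself), and time integration then leaves an error $O(\kappa C^m)$ that neither vanishes as $N\to\infty$ nor decays in $t$ --- strictly weaker than $(C_0m/N)^{m/2}+C_0^me^{-\lambda_0pmt}A^{N,m}_{3p,q}(0)$. The term must be kept \emph{quadratic} in the error: the paper writes $\nabla W\ast(\mu^N_s-\mu_s)(x)=c\int_{\R^d} i\xi\widehat W(\xi)e_\xi(x)\big(\int_\Xd e_{-\xi}(\mu^N_s-\mu_s)\big)\ddr\xi$, so that the factor $\int_\Xd e_{-\xi}(\mu^N_s-\mu_s)$ --- the error tested against the admissible function $e_{-\xi}$ --- appears explicitly and is re-estimated self-consistently, while only the bounded threshold $L$ multiplies it and concentration is reserved for the excess $(Q_p(\mu^N_s)-L)_+$. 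This is also why one cannot close a Gr\"onwall inequality for a single fixed $\varphi$ as you attempt: the Fourier modes $e_{-\xi}$ are different test functions, so the argument must be run on a supremum over all admissible test functions \emph{and} intermediate times $s\le t$ (the quantity $D^{N,m}_{p,q}(t)$ with weight $e^{\gamma m(t-s)}$ in the paper).

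A second flaw is the damping term $-\lambda_0pm\,\E[|X_s|^m]$ in your ``closed differential inequality'': it has no source. Precisely because the linear drift of $X_s$ cancels by construction, $\E[|X_s|^m]$ is, up to the small corrections, nondecreasing in $s$. The exponential factor in the conclusion comes instead from applying Lemma~\ref{lem:ergodic} to the initial-data term, $\E[|X_0|^m]\le\big\|\langle z\rangle^{-p}\big(U_{0,t}\varphi-\int_\Xd\varphi M\big)\big\|^m_{W^{2,q'}(\Xd)}A^{N,m}_{p,q}(0)\lesssim C^me^{-\lambda_0pmt}A^{N,m}_{p,q}(0)$, while the small Young leftovers $\e\,\E[|X_s|^m]$ are absorbed by the artificial weight $e^{\gamma m(t-s)}$ with $\gamma\simeq\lambda_0p$ and the smallness $2C\kappa L\le\gamma$. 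Relatedly, the jump from $p$ to $3p$ does not come from ``reallocating weights'' on $\psi_0$: it enters because the concentration bound requires controlling $\E[|\int_\Xd\langle z\rangle^{2p}(\mu^N_0-\mu_\circ)|^m]$, and $\langle z\rangle^{2p}$ is an admissible test function only for the $3p$-weighted norm (using $pq'\gg1$). These two points can be repaired, but only by switching to the paper's structure (Fourier decomposition plus the weighted supremum quantity), not within your fixed-$\varphi$ differential inequality.
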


\begin{proof}
Let $\lambda_0$ stand for the minimum between the corresponding exponents in Lemmas~\ref{lem:ergodic} and~\ref{lem:mom}.
Let $1<q\le2$ and $0<p\le1$ be fixed with $pq'\gg_{\beta,a}1$.
Instead of controlling directly the evolution of~$A_{p,q}^{N,m}(t)$, let us consider
\begin{equation*}
D^{N,m}_{p,q}(t)\,:=\,\sup\bigg\{e^{\gamma m(t-s)}\E\bigg[\Big|\int_\Xd( U_{s,t}\varphi)(\mu_s^N-\mu_s)\Big|^m\bigg]~:\,~0\leq s \leq t,~\varphi\in C^\infty_c(\Xd),~\|\langle z\rangle^{-p}\varphi\|_{W^{2,q'}(\Xd)}\le1\bigg\},
\end{equation*}
for some exponent $\gamma>0$ to be chosen appropriately later, where we recall that $\{U_{s,t}\}_{0\le s\le t}$ is the backward linearized mean-field evolution defined in~\eqref{eq:def-Ust}. Given a test function $\varphi\in C^\infty_c(\Xd)$ with $\|\langle z\rangle^{-p}\varphi\|_{W^{2,q'}(\Xd)}\le1$, we find by It\^o's formula, for all $m \ge 2$ and $0 \leq s \leq t$,
\begin{multline}\label{eq:mf_dvpt_ito}
e^{\gamma m(t-s)}\E\bigg[\Big|\int_\Xd (U_{s,t}\varphi)(\mu_s^N-\mu_s)\Big|^m\bigg]
\le\, e^{\gamma m t}\E\bigg[\Big|\int_\Xd (U_{0,t}\varphi)(\mu_0^N-\mu_\circ)\Big|^m\bigg] \\
- \gamma m \int_0^s e^{\gamma m(t-\tau)}\E\bigg[\Big|\int_\Xd (U_{\tau,t}\varphi)(\mu_\tau^N-\mu_\tau)\Big|^m\bigg] \ddr \tau \\
+ \kappa m \int_0^s e^{\gamma m(t-\tau)}\E\bigg[\Big|\int_\Xd (U_{\tau,t}\varphi)(\mu_\tau^N-\mu_\tau)\Big|^{m-1}\Big|\int_{\Xd}\mu_\tau^N \nabla_{v}(U_{\tau,t}\varphi)\cdot \nabla W\ast(\mu_\tau^N-\mu_\tau)\Big|\bigg] \ddr \tau \\
+ \frac{m(m-1)}{N}\int_0^s e^{\gamma m(t-\tau)}\E\bigg[\Big|\int_\Xd (U_{\tau,t}\varphi)(\mu_\tau^N-\mu_\tau)\Big|^{m-2}\int_\Xd |\nabla_{v}(U_{\tau,t}\varphi)|^2\mu^N_\tau\bigg]\ddr \tau.
\end{multline}
We start by examining the penultimate right-hand side term.
Similarly as e.g.\@ in~\cite{Cornalba-Fischer-Raithel-2023}, it is convenient to decompose $\nabla W\ast(\mu_s^N-\mu_s)$ as a superposition of Fourier modes,
\begin{equation*}
\int_{\Xd}\mu_\tau^N \nabla_{v}(U_{\tau,t}\varphi)\cdot \nabla W\ast(\mu_\tau^N-\mu_\tau)
\,=\, C\int_{\mathbb{R}^d} i\xi \widehat{W}(\xi)\cdot\Big(\int_{\Xd}\nabla_{v}(U_{\tau,t}\varphi) e_{\xi}\mu_\tau^N\Big)\Big(\int_{\Xd}e_{-\xi}(\mu_\tau^N - \mu_\tau)\Big)\ddr \xi,
\end{equation*}
where we have set $e_\xi(x) := e^{ix\cdot\xi}$.
By the ergodic estimates of Lemma~\ref{lem:ergodic}, together with the Sobolev embedding with {$\ell>2d/q'$}, we find
\begin{equation}\label{eq:L-infty-bound-grad-phi}
\|\langle z\rangle^{-p}\nabla_v(U_{\tau,t}\varphi)e_\xi\|_{\Ld^\infty(\Xd)}\,\lesssim\,\|\langle z\rangle^{-p}\nabla_v(U_{\tau,t}\varphi)\|_{W^{\ell,q'}(\Xd)}
\,\lesssim\,e^{-\lambda_0p(t-\tau)}\|\langle z\rangle^{-p}\varphi\|_{W^{\ell+1,q'}(\Xd)}.
\end{equation}
Choosing $q'\ge pq'\gg1$ large enough, we can choose {$\ell>2d/q'$} with $\ell+1\le2$.
Hence, recalling the choice of $\varphi$ with $\|\langle z\rangle^{-p}\varphi\|_{W^{2,q'}(\Xd)}\le1$, we get
\[\Big|\int_{\Xd}\mu_\tau^N \nabla_{v}(U_{\tau,t}\varphi)\cdot \nabla W\ast(\mu_\tau^N-\mu_\tau)\Big|\,\lesssim\,e^{-\lambda_0p(t-\tau)}Q_p(\mu_\tau^N)\int_{\R^d}|\xi\widehat{W}(\xi)|\Big|\int_\Xd e_{-\xi}(\mu_\tau^N-\mu_\tau)\Big|\,\ddr\xi,\]
where the multiplicative constant only depends on $d,\beta,p,q,a$, $Q_2(\mu_\circ)$, and $\|W\|_{W^{d+3,\infty}(\R^d)}$.
Given a parameter $L\ge1$ to be chosen appropriately later, decomposing $Q_p(\mu_\tau^N)\le L+(Q_p(\mu_\tau^N)-L)_+$, noting that $|\int_\Xd e_{-\xi}(\mu_\tau^N-\mu_\tau)|\le2$ and that the assumptions on $W$ ensure $\int_{\R^d}|\xi\widehat{W}(\xi)|\ddr\xi\lesssim1$, we can bound
\begin{multline*}
\Big|\int_{\Xd}\mu_\tau^N \nabla_{v}(U_{\tau,t}\varphi)\cdot \nabla W\ast(\mu_\tau^N-\mu_\tau)\Big|
\,\lesssim\,Le^{-\lambda_0p(t-\tau)}\int_{\R^d}|\xi\widehat{W}(\xi)|\Big|\int_\Xd e_{-\xi}(\mu_\tau^N-\mu_\tau)\Big|\,\ddr\xi\\
+e^{-\lambda_0p(t-\tau)}(Q_p(\mu_\tau^N)-L)_+.
\end{multline*}
Now using this to estimate the penultimate term in~\eqref{eq:mf_dvpt_ito}, we find
\begin{multline}\label{eq:mf_dvpt_ito-re}
\int_0^s e^{\gamma m(t-\tau)}\E\bigg[\Big|\int_\Xd (U_{\tau,t}\varphi)(\mu_\tau^N-\mu_\tau)\Big|^{m-1}\Big|\int_{\Xd}\mu_\tau^N \nabla_{v}(U_{\tau,t}\varphi)\cdot \nabla W\ast(\mu_\tau^N-\mu_\tau)\Big|\bigg] \ddr \tau\\
\,\lesssim\,
L\int_{\R^d}|\xi\widehat W(\xi)| \int_0^s e^{\gamma m(t-\tau)}e^{-\lambda_0p(t-\tau)} \E\bigg[\Big|\int_\Xd e_{-\xi}(\mu_\tau^N-\mu_\tau)\Big|\Big|\int_\Xd (U_{\tau,t}\varphi)(\mu_\tau^N-\mu_\tau)\Big|^{m-1}\bigg]\ddr \tau\ddr\xi\\
+\int_0^s e^{\gamma m(t-\tau)}e^{-\lambda_0p(t-\tau)} \E\bigg[(Q_p(\mu_\tau^N)-L)_+\Big|\int_\Xd (U_{\tau,t}\varphi)(\mu_\tau^N-\mu_\tau)\Big|^{m-1}\bigg]\ddr \tau.
\end{multline}
Let us start by examining the first right-hand side term in this estimate.
Noting that the definition of~$D_{p,q}^{N,m}(t)$ entails in particular
\[\E\bigg[\Big|\int_\Xd e_{-\xi}(\mu_\tau^N-\mu_\tau)\Big|^m\bigg]^\frac1m\,\le\, \|\langle z\rangle^{-p}e_{-\xi}\|_{W^{2,q'}(\Xd)}D_{p,q}^{N,m}(\tau)^\frac1m\,\lesssim\,\langle\xi\rangle^2D_{p,q}^{N,m}(\tau)^\frac1m,\]
and noting that the assumptions on $W$ allow to estimate as follows the remaining integral with respect to $\xi$, for some $\delta>0$,
\begin{equation*}
\int_{\mathbb{R}^d} \langle \xi\rangle^2|\xi\widehat W(\xi)|\ddr \xi
\,\lesssim_\delta\,\Big(\int_{\mathbb{R}^d} \langle \xi\rangle^{2(\frac{d+\delta}{2}+3)} |\widehat{W}(\xi)|^2\ddr \xi\Big)^\frac12\,
\simeq\,\|W\|_{H^{\frac{d+\delta}{2}+3}(\R^d)}\,<\,\infty,
\end{equation*}
we can use H\"older's inequality to estimate
\begin{multline*}
\lefteqn{L\int_{\R^d}|\xi\widehat W(\xi)| \int_0^s e^{\gamma m(t-\tau)}e^{-\lambda_0p(t-\tau)} \E\bigg[\Big|\int_\Xd e_{-\xi}(\mu_\tau^N-\mu_\tau)\Big|\Big|\int_\Xd (U_{\tau,t}\varphi)(\mu_\tau^N-\mu_\tau)\Big|^{m-1}\bigg]\ddr \tau\ddr\xi}\\
\,\lesssim\,L\int_0^s e^{\gamma m(t-\tau)}e^{-\lambda_0p(t-\tau)} \E\bigg[\Big|\int_\Xd (U_{\tau,t}\varphi)(\mu_\tau^N-\mu_\tau)\Big|^m\bigg]^{1-\frac{1}m}D_{p,q}^{N,m}(\tau)^\frac1m\,\ddr \tau.
\end{multline*}
Inserting this into~\eqref{eq:mf_dvpt_ito-re} and further using Young's inequality, we get
\begin{multline}\label{eq:mf_dvpt_ito-1}
\int_0^s e^{\gamma m(t-\tau)}\E\bigg[\Big|\int_\Xd (U_{\tau,t}\varphi)(\mu_\tau^N-\mu_\tau)\Big|^{m-1}\Big|\int_{\Xd}\mu_\tau^N \nabla_{v}(U_{\tau,t}\varphi)\cdot \nabla W\ast(\mu_\tau^N-\mu_\tau)\Big|\bigg] \ddr \tau\\
\,\lesssim\,
L\int_0^s e^{\gamma m(t-\tau)}\E\bigg[\Big|\int_\Xd (U_{\tau,t}\varphi)(\mu_\tau^N-\mu_\tau)\Big|^m\bigg]\,\ddr \tau\\
+\frac{L}m\int_0^s e^{(\gamma-\lambda_0p)m(t-\tau)} D_{p,q}^{N,m}(\tau)\,\ddr \tau
+\frac1m\int_0^s e^{(\gamma -\lambda_0p)m(t-\tau)} \E\big[(Q_p(\mu_\tau^N)-L)_+^m\big]\ddr \tau.
\end{multline}
We turn to the last term in~\eqref{eq:mf_dvpt_ito}. Applying again Young's inequality, we can bound
\begin{multline*}
\frac{m(m-1)}{N}\int_0^se^{\gamma m(t-\tau)}\E\bigg[\Big|\int_\Xd (U_{\tau,t}\varphi)(\mu_\tau^N-\mu_\tau)\Big|^{m-2}\int_\Xd |\nabla_{v}(U_{\tau,t}\varphi)|^2\mu^N_\tau\bigg]\ddr \tau \\
\,\le\, \frac{\gamma m}{2}\int_0^se^{\gamma m(t-\tau)}\E\bigg[\Big|\int_\Xd U_{\tau,t}\varphi(\mu_\tau^N-\mu_\tau)\Big|^{m}\bigg]\ddr \tau
+ \gamma\Big(\frac{2m}{\gamma N}\Big)^\frac{m}{2}\int_0^s e^{\gamma m(t-\tau)} \E\bigg[\Big(\int_\Xd |\nabla_{v}(U_{\tau,t}\varphi)|^2\mu^N_\tau\Big)^\frac{m}{2}\bigg]\ddr \tau,
\end{multline*}
and thus, using again the ergodic estimates of Lemma~\ref{lem:ergodic} as in~\eqref{eq:L-infty-bound-grad-phi},
\begin{multline*}
\frac{m(m-1)}{N}\int_0^se^{\gamma m(t-\tau)}\E\bigg[\Big|\int_\Xd (U_{\tau,t}\varphi)(\mu_\tau^N-\mu_\tau)\Big|^{m-2}\int_\Xd |\nabla_{v}(U_{\tau,t}\varphi)|^2\mu^N_\tau\bigg]\ddr \tau \\
\,\le\, \frac{\gamma m}{2}\int_0^se^{\gamma m(t-\tau)}\E\bigg[\Big|\int_\Xd U_{\tau,t}\varphi(\mu_\tau^N-\mu_\tau)\Big|^{m}\bigg]\ddr \tau
+ \gamma\Big(\frac{Cm}{\gamma N}\Big)^\frac{m}{2}\int_0^s e^{(\gamma -\lambda_0p)m(t-\tau)}\E\big[Q_{2p}(\mu^N_\tau)^\frac{m}{2}\big]\ddr \tau.
\end{multline*}
Inserting this together with~\eqref{eq:mf_dvpt_ito-1} into~\eqref{eq:mf_dvpt_ito}, we are led to
\begin{multline*}
e^{\gamma m(t-s)}\E\bigg[\Big|\int_\Xd (U_{s,t}\varphi)(\mu_s^N-\mu_s)\Big|^m\bigg]
\le\, e^{\gamma m t}\E\bigg[\Big|\int_\Xd (U_{0,t}\varphi)(\mu_0^N-\mu_\circ)\Big|^m\bigg] \\
+\big(C\kappa L-\tfrac12\gamma\big) m \int_0^s e^{\gamma m(t-\tau)}\E\bigg[\Big|\int_\Xd (U_{\tau,t}\varphi)(\mu_\tau^N-\mu_\tau)\Big|^m\bigg] \ddr \tau \\
+C\kappa L\int_0^s e^{(\gamma-\lambda_0p)m(t-\tau)} D_{p,q}^{N,m}(\tau)\,\ddr \tau
+C\kappa\int_0^s e^{(\gamma -\lambda_0p)m(t-\tau)} \E\big[(Q_p(\mu_\tau^N)-L)_+^m\big]\ddr \tau\\
+ \gamma\Big(\frac{Cm}{\gamma N}\Big)^\frac{m}{2}\int_0^s e^{(\gamma -\lambda_0p)m(t-\tau)}\E\big[Q_{2p}(\mu^N_\tau)^\frac{m}{2}\big]\ddr \tau.
\end{multline*}
Note that for the first right-hand side term the ergodic estimates of Lemma~\ref{lem:ergodic} and the definition of~$D_{p,q}^{N,m}(0)$ yield
\begin{equation*}
\E\bigg[\Big|\int_\Xd (U_{0,t}\varphi)(\mu_0^N-\mu_\circ)\Big|^m\bigg] \,\le\, \Big\|\langle z\rangle^{-p}\Big(U_{0,t}\varphi - \int_{\Xd}\varphi M\Big)\Big\|^m_{W^{2,q'}(\Xd)}D_{p,q}^{N,m}(0)\,\le\, C^m e^{-\lambda_0 pm t }D_{p,q}^{N,m}(0).
\end{equation*}
Provided that $\gamma \geq 2C\kappa L$, taking the supremum over $\varphi$ and $0 \leq s \leq t$, we deduce
\begin{multline}\label{eq:estim-gronDNm}
D_{p,q}^{N,m}(t) \,\le\, C^me^{(\gamma -\lambda_0p)m t}D_{p,q}^{N,m}(0)
+C\kappa L \int_0^t e^{(\gamma - \lambda_0p)m(t-\tau)}D_{p,q}^{N,m}(\tau)\,\ddr \tau \\
+ C\kappa\int_0^t e^{(\gamma-\lambda_0p)m(t-\tau)}\E\big[(Q_{2p}(\mu^N_\tau)-L)^m_+\big] \ddr \tau \\
+ \gamma\Big(\frac{Cm}{\gamma N}\Big)^\frac{m}{2}\int_0^t e^{(\gamma -\lambda_0p)m(t-\tau)}\E\big[Q_{2p}(\mu^N_\tau)^\frac{m}{2}\big] \ddr \tau.
\end{multline}
Noting that for $pq'\gg1$ we have
\begin{equation*}
\E\bigg[\Big|\int_{\Xd} \langle z\rangle^{2p}(\mu_0^N - \mu_\circ)\Big|^m\bigg] \,\le\,C^mD_{3p,q}^{N,m}(0),
\end{equation*}
and choosing $L\gg1$ large enough (only depending on $d,\beta,a$, $Q_2(\mu_\circ)$, $\|W\|_{W^{1,\infty}(\R^d)}$),
the concentration bound of Lemma~\ref{lem:mom} yields, for $2\le m\le N$,
\begin{multline*}
\int_0^t e^{(\gamma-\lambda_0p)m(t-\tau)}\,\E\big[(Q_{2p}(\mu^N_\tau)-L)^m_+\big] \ddr \tau \\
\,\le\,
\Big(\frac{CLm}N\Big)^\frac{m}{2}\int_0^t e^{(\gamma-\lambda_0p)m(t-\tau)}\ddr \tau
+C^mD_{3p,q}^{N,m}(0)\int_0^t e^{(\gamma-\lambda_0p)m(t-\tau)}e^{-\lambda_0pm \tau}\ddr \tau,
\end{multline*}
and thus, provided that $\gamma\le\frac12\lambda_0p$,
\begin{equation*}
\int_0^t e^{(\gamma-\lambda_0p)m(t-\tau)}\,\E\big[(Q_{2p}(\mu^N_\tau)-L)^m_+\big] \ddr \tau 
\,\le\,
\Big(\frac{CLm}N\Big)^\frac{m}{2}
+C^me^{(\gamma-\lambda_0p)mt}D_{3p,q}^{N,m}(0).
\end{equation*}
Similarly, we can also bound
\begin{eqnarray*}
\lefteqn{\Big(\frac{Cm}N\Big)^\frac m2\int_0^t e^{(\gamma -\lambda_0p)m(t-\tau)}\E\big[Q_{2p}(\mu^N_\tau)^\frac{m}{2}\big] \ddr \tau}\\
&\le&\Big(\frac{CLm}N\Big)^\frac m2\int_0^t e^{(\gamma -\lambda_0p)m(t-\tau)}\ddr \tau
+C^m\int_0^t e^{(\gamma -\lambda_0p)m(t-\tau)}\E\big[(Q_{2p}(\mu^N_\tau)-L)_+^m\big] \ddr \tau\\
&\le&\Big(\frac{CLm}N\Big)^\frac m2
+C^me^{(\gamma-\lambda_0p)mt}D_{3p,q}^{N,m}(0).
\end{eqnarray*}
Inserting these bounds into~\eqref{eq:estim-gronDNm}, choosing $\gamma:=\frac12\lambda_0p$, and assuming that $\kappa$ is small enough in the sense of $2C\kappa L\le\frac12\lambda_0p$, we find
\begin{equation*}
D_{p,q}^{N,m}(t) \,\le\,\Big(\frac{CLm}N\Big)^\frac m2
+C^me^{-\frac12\lambda_0pm t}D_{3p,q}^{N,m}(0)
+C\kappa L \int_0^t e^{-\frac12\lambda_0pm(t-\tau)}D_{p,q}^{N,m}(\tau)\,\ddr \tau,
\end{equation*}
and thus, by Gr\"onwall's inequality,
\begin{multline*}
D_{p,q}^{N,m}(t) \,\le\,
\Big(\frac{CLm}N\Big)^\frac m2
+C^me^{-\frac12\lambda_0pm t}D_{3p,q}^{N,m}(0)\\
+C\kappa L e^{-\frac12\lambda_0pm t}\int_0^t e^{C\kappa L(t-\tau)}\bigg(e^{\frac12\lambda_0pm \tau}\Big(\frac{CLm}N\Big)^\frac m2
+C^mD_{3p,q}^{N,m}(0)\bigg)\,\ddr \tau.
\end{multline*}
As the smallness requirement on $\kappa$ precisely ensures $C\kappa L \le \frac{1}{4}\lambda_0 p$, we get after computing the remaining time integrals,
\begin{equation*}
D_{p,q}^{N,m}(t) \,\le\,
\Big(\frac{CLm}N\Big)^\frac m2
+C^m e^{-\frac14\lambda_0p m t}D_{3p,q}^{N,m}(0).
\end{equation*}
As by definition we have $A^{N,m}_{p,q}(t) \le D^{N,m}_{p,q}(t)$ and $A^{N,m}_{p,q}(0) = D^{N,m}_{p,q}(0)$, the conclusion follows (up to renaming $\lambda_0$).
\end{proof}

\section{Creation of chaos}
\label{sec:proof_thm_main}
This section is devoted to the proof of Theorem~\ref{th:creation} using the preliminary results above. Our argument shows how the estimates of Proposition~\ref{prop:ito} can be turned into suboptimal mean-field estimates for marginals, which in turn can be made optimal in $N$ after combination with direct estimates on the BBGKY hierarchy. A similar idea will be used again for the proof of Theorem~\ref{th:estim-GNm} in the next section.

\begin{proof}[Proof of Theorem~\ref{th:creation}]
We split the proof into three steps.
Let $\lambda_0,p,q,\kappa$ be as in Proposition~\ref{prop:ito}, and assume $p\le\frac16$. In the sequel of this proof, all multiplicative constants are allowed to depend on $d,\beta,a,p,q,$ $Q_2(\mu_\circ)$, $Q_{2}(F_\circ^{N,1})$, and $\|W\|_{W^{d+3,\infty}\cap H^s(\R^d)}$ for some $s>\frac d2+5$. A subscript `$m$' to constants is used to indicate further dependence on $m$ and on $Q_{mp}(F_\circ^{N,1})$.

\medskip\noindent
{\bf Step~1:} Reformulation of Proposition~\ref{prop:ito}: proof of the suboptimal mean-field estimate
\begin{multline}
\label{eq:step1suboptimal}
\|F_t^{N,1}-\mu_t\|_{W^{-2,q}_p(\Xd)}^2+\|G_t^{N,2}\|_{W^{-2,q}_p(\Xd)^{\otimes 2}}\\
\,\lesssim\,N^{-1}+e^{-2\lambda_0pt}\Big(\|F_\circ^{N,1}-\mu_\circ\|_{W^{-2,q}_{3p}(\Xd)}^2+\|G_\circ^{N,2}\|_{W^{-2,q}_{3p}(\Xd)^{\otimes 2}}\Big).
\end{multline}
Noting that
\begin{equation*}
\E\bigg[\Big(\int_{\Xd} \varphi\,(\mu^N_t-\mu_t)\Big)\bigg]
\,=\,\int_\Xd\varphi (F^{N,1}_t-\mu_t),
\end{equation*}
and that
\begin{eqnarray*}
A^{N,2}_{p,q}(t)
&\ge&\sup\bigg\{\Big(\int_\Xd\varphi(F_t^{N,1}-\mu_t)\Big)^2~:~\varphi\in C^\infty_c(\Xd),~\|\langle \cdot \rangle^{-p}\varphi\|_{W^{2,q'}(\Xd)}\le 1\bigg\}\\
&=&\|F_t^{N,1}-\mu_t\|_{W^{-2,q}_p(\Xd)}^2,
\end{eqnarray*}
Proposition~\ref{prop:ito} implies for $m=2$,
\begin{equation}\label{eq:reform-B1}
\|F_t^{N,1}-\mu_t\|_{W^{-2,q}_p(\Xd)}^2\,\lesssim\,N^{-1}+e^{-2\lambda_0pt}A_{3p,q}^{N,2}(0).
\end{equation}
Also note that
\begin{equation*}
\E\bigg[\Big(\int_{\Xd} \varphi\,(\mu^N_t-\mu_t)\Big)^2\bigg]
\,=\,\Big(\int_\Xd\varphi (F^{N,1}_t-\mu_t)\Big)^2
+\int_\Xd\varphi^{\otimes2}G^{N,2}_t
+\tfrac1N\int_\Xd\varphi^2F^{N,1}_t
-\tfrac1N\int_\Xd\varphi^{\otimes2}F^{N,2}_t,
\end{equation*}
and thus, taking the supremum over $\varphi$, and using the Sobolev embedding together with the moment bounds of Lemma~\ref{lem:mom}, which yields in particular
\begin{equation}\label{eq:control_QF2}
\|F_t^{N,2}\|_{W_p^{-2,q}(\Xd)^{\otimes2}}\,\lesssim\,\int_{\Xd^2}\langle z\rangle^p\langle z_*\rangle^pF_t^{N,2}(z,z_*)\,\ddr z\ddr z_*\,\le\,\int_{\Xd}\langle z\rangle^{2p}F_t^{N,1} = Q_{2p}(F^{N,1}_t),
\end{equation}
we get
\begin{equation*}
\Big|A_{p,q}^{N,2}(t)-\|G_t^{N,2}\|_{W^{-2,q}_{p}(\Xd)^{\otimes 2}}\Big|\,\le\,\|F_t^{N,1}-\mu_t\|^2_{W^{-2,q}_{p}(\Xd)}+CN^{-1}Q_{2p}(F_t^{N,1}).
\end{equation*}
Appealing to Lemma~\ref{lem:mom} in form of $Q_{2p}(F_t^{N,1})\lesssim 1 + Q_{2p}(F_\circ^{N,1})\lesssim1$,
this means
\begin{equation}\label{eq:AtN2-equiv}
\Big|A_{p,q}^{N,2}(t)-\|G_t^{N,2}\|_{W^{-2,q}_{p}(\Xd)^{\otimes 2}}\Big|\,\lesssim\,\|F_t^{N,1}-\mu_t\|^2_{W^{-2,q}_{p}(\Xd)}+CN^{-1}.
\end{equation}
Combining with the result of Proposition~\ref{prop:ito} to get a control on $G_t^{N,2}$, we obtain
\[\|G_t^{N,2}\|_{W^{-2,q}_p(\Xd)^{\otimes 2}}\,\lesssim\,\|F_t^{N,1}-\mu_t\|_{W^{-2,q}_p(\Xd)}^2+N^{-1}+e^{-2\lambda_0pt}A^{N,2}_{3p,q}(0),\]
which thus yields, together with~\eqref{eq:reform-B1},
\begin{equation}\label{eq:bound_G_N_2_A_2}
\|F_t^{N,1}-\mu_t\|_{W^{-2,q}_p(\Xd)}^2+\|G_t^{N,2}\|_{W^{-2,q}_p(\Xd)^{\otimes 2}}\,\lesssim\,N^{-1}+e^{-2\lambda_0pt}A^{N,2}_{3p,q}(0).
\end{equation}
Now applying~\eqref{eq:AtN2-equiv} at $t=0$ with $p$ replaced by $3p$, we find
\[A_{3p,q}^{N,2}(0)\,\le\,\|F_\circ^{N,1}-\mu_\circ\|^2_{W^{-2,q}_{3p}(\Xd)}+\|G_\circ^{N,2}\|_{W^{-2,q}_{3p}(\Xd)^{\otimes 2}}+CN^{-1},\]
and the claim~\eqref{eq:step1suboptimal} follows.

\medskip\noindent
{\bf Step~2:} Proof of the improved mean-field bound
\begin{eqnarray}
\lefteqn{\|F_t^{N,1}-\mu_t\|_{W^{-3,q}_p(\Xd)}}\nonumber\\
&\lesssim&N^{-1} + e^{-\frac12\lambda_0pt}\Big(\|F_\circ^{N,1} - \mu_\circ\|_{W^{-2,q}_{p}(\Xd)}+\|F_\circ^{N,1} - \mu_\circ\|_{W^{-2,q}_{3p}(\Xd)}^2+\|G^{N,2}_\circ\|_{W^{-2,q}_{3p}(\Xd)^{\otimes 2}}\Big)\nonumber\\
&\lesssim& N^{-1} + e^{-\frac12\lambda_0pt}\Big(\|F_\circ^{N,1} - \mu_\circ\|_{W^{-2,q}_{3p}(\Xd)}+\|G^{N,2}_\circ\|_{W^{-2,q}_{3p}(\Xd)^{\otimes 2}}\Big).\label{eq:step2optimal}
\end{eqnarray}
Integrating the Liouville equation~\eqref{eq:Liouville} with respect to its last $N-1$ coordinates, and decomposing the second marginal as $F^{N,2}=F^{N,1}\otimes F^{N,1}+G^{N,2}$, we find that the first marginal $F^{N,1}$ satisfies the following BBGKY equation,
\begin{multline*}
\partial_t(F^{N,1}-\mu)=R_\mu(F^{N,1}-\mu)+\kappa \nabla W\ast(F^{N,1}-\mu)\cdot\nabla_vF^{N,1}\\
+\kappa\int_\Xd \nabla W(\cdot-x_*)\cdot\nabla_v(G^{N,2}-\tfrac1NF^{N,2})(\cdot,z_*)\,\ddr z_*,
\end{multline*}
where $R_\mu$ stands for the following (truncated) linearized mean-field operator (linearized at the mean-field solution~$\mu$),
\[R_\mu h\,:=\,\tfrac12\Div_v((\nabla_v+\beta v)h)-v\cdot\nabla_xh+(\nabla A+\kappa\nabla W\ast\mu)\cdot\nabla_vh.\]
Given $\varphi \in C^\infty_c(\Xd)$, recalling the definition~\eqref{eq:def-Ust} of the backward linearized evolution $\{U_{s,t}\}_{0\le s\le t}$, we may then compute
\begin{multline}\label{eq:decomp-F1-mu-hier}
\int_{\Xd} \varphi(F^{N,1}_t - \mu_t) 
\,=\,\int_{\Xd} (U_{0,t}\varphi)(F^{N,1}_\circ - \mu_\circ)
-\kappa\int_0^t\Big( \int_{\Xd}\nabla_v (U_{s,t}\varphi)\cdot \nabla W \ast(F^{N,1}_s - \mu_s)\,F^{N,1}_s\Big)\ddr s\\
-\kappa \int_0^t\Big(\int_{\Xd^2}\nabla_v (U_{s,t}\varphi)(z)\cdot\nabla W(x-x_\ast)\,(G^{N,2}_s-\tfrac{1}{N}F^{N,2}_s)(z,z_\ast)\,\ddr z \ddr z_\ast\Big)\ddr s.
\end{multline}
Let us examine the three right-hand side terms separately.
Using the ergodic estimates of Lemma~\ref{lem:ergodic}, the first one is bounded by
\[\Big|\int_{\Xd} (U_{0,t}\varphi)(F^{N,1}_\circ - \mu_\circ)\Big|\,\lesssim\,e^{-\lambda_0pt}\|\langle z\rangle^{-p}\varphi\|_{W^{2,q'}(\Xd)}\|F^{N,1}_\circ - \mu_\circ\|_{W^{-2,q}_p(\Xd)}.\]
For the second term, using again the ergodic estimates of Lemma~\ref{lem:ergodic} along with the Sobolev embedding, we can bound
\begin{eqnarray*}
\lefteqn{\Big|\int_{\Xd}\nabla_v (U_{s,t}\varphi)\cdot \nabla W \ast(F^{N,1}_s - \mu_s)\,F^{N,1}_s\Big|}\\
&\lesssim&Q_p(F^{N,1}_s)\,\|\langle z\rangle^{-p}\nabla_v (U_{s,t}\varphi)\|_{\Ld^\infty(\Xd)}\sup_{x\in\R^d}\Big|\int_\Xd \nabla W(x-\cdot)\,(F^{N,1}_s - \mu_s)\Big|\\
&\lesssim&Q_p(F^{N,1}_s)\,e^{-\lambda_0p(t-s)}\,\|\langle z\rangle^{-p}\varphi\|_{W^{2,q'}(\Xd)}\,\|F^{N,1}_s - \mu_s\|_{W^{-3,q}_p(\Xd)},
\end{eqnarray*}
and thus, further using the moment bounds of Lemma~\ref{lem:mom},
\begin{equation*}
\Big|\int_{\Xd}\nabla_v (U_{s,t}\varphi)\cdot \nabla W \ast(F^{N,1}_s - \mu_s)\,F^{N,1}_s\Big|
\,\lesssim\,e^{-\lambda_0p(t-s)}\,\|\langle z\rangle^{-p}\varphi\|_{W^{2,q'}(\Xd)}\,\|F^{N,1}_s - \mu_s\|_{W^{-3,q}_p(\Xd)}.
\end{equation*}
For the last term in~\eqref{eq:decomp-F1-mu-hier}, we aim to appeal to the bound~\eqref{eq:step1suboptimal} of Step~1 for $G^{N,2}$. Note, however, that the map $(z,z_*)\mapsto\nabla_v(U_{s,t}\varphi)(z)\cdot\nabla W(x-x_*)$ is not an admissible test function for such estimates in~$W^{-2,q}_p(\Xd)^{\otimes2}$, cf.~\eqref{eq:definition_W_tilde}. To remedy this, we decompose $W$ as a superposition of Fourier modes, which indeed allows to reduce to a tensorized test function. More precisely, by polarization, using that $G^{N,2}_s-\tfrac{1}{N}F^{N,2}_s$ is symmetric in its two variables, and recalling the short-hand notation $e_\xi(x)=e^{ix\cdot\xi}$, we can bound
\begin{equation}\label{eq:bound_fourier_G_N_2}
\begin{aligned}
\lefteqn{\Big|\int_{\Xd^2}\nabla_v (U_{s,t}\varphi)(z)\cdot\nabla W(x-x_\ast)\,(G^{N,2}_s-\tfrac{1}{N}F^{N,2}_s)(z,z_\ast)\,\ddr z \ddr z_\ast\Big|}\\
&\le\int_{\R^d}|\xi\widehat W(\xi)|\Big|\int_{\Xd^2}e_\xi(x_*)e_{-\xi}(x) \nabla_v (U_{s,t}\varphi)(z) \,(G^{N,2}_s-\tfrac{1}{N}F^{N,2}_s)(z,z_\ast)\,\ddr z \ddr z_\ast\Big|\ddr\xi\\
&\le4\|G^{N,2}_s-\tfrac{1}{N}F^{N,2}_s\|_{W^{-2,q}_p(\Xd)^{\otimes2}}\int_{\R^d}|\xi\widehat W(\xi)|\|\langle z\rangle^{-p}e_\xi\|_{W^{2,q'}(\Xd)}\|\langle z\rangle^{-p}e_\xi\nabla_v(U_{s,t}\varphi)\|_{W^{2,q'}(\Xd)}\,\ddr\xi.
\end{aligned} 
\end{equation}
Noting that
\[\|\langle z\rangle^{-p}e_\xi\|_{W^{2,q'}(\Xd)}\,\lesssim\,\langle\xi\rangle^2,\qquad\|\langle z\rangle^{-p}e_\xi\nabla_v(U_{s,t}\varphi)\|_{W^{2,q'}(\Xd)}\,\lesssim\,\langle\xi\rangle^2e^{-\lambda_0p(t-s)}\|\langle z\rangle^{-p}\varphi\|_{W^{3,q'}(\Xd)},\]
we deduce
\begin{multline*}
\Big|\int_{\Xd^2}\nabla_v (U_{s,t}\varphi)(z)\cdot\nabla W(x-x_\ast)\,(G^{N,2}_s-\tfrac{1}{N}F^{N,2}_s)(z,z_\ast)\,\ddr z \ddr z_\ast\Big|\\
\,\lesssim\,e^{-\lambda_0p(t-s)}\|\langle z\rangle^{-p}\varphi\|_{W^{3,q'}(\Xd)}\|G^{N,2}_s-\tfrac{1}{N}F^{N,2}_s\|_{W^{-2,q}_p(\Xd)^{\otimes2}}\int_{\R^d}\langle\xi\rangle^4|\xi\widehat W(\xi)|\,\ddr\xi,
\end{multline*}
where the last factor can be controlled by $\|W\|_{H^s(\R^d)}$ for $s>\frac d2+5$.
Combining these estimates for the different terms in~\eqref{eq:decomp-F1-mu-hier}, and taking the supremum over $\varphi$,
we get
\begin{multline*}
\|F^{N,1}_t - \mu_t\|_{W^{-3,q}_p(\Xd)}
\,\lesssim\,e^{-\lambda_0pt}\|F_\circ^{N,1}-\mu_\circ\|_{W^{-2,q}_p(\Xd)}\\
+\kappa\int_0^te^{-\lambda_0p(t-s)}\|F_s^{N,1}-\mu_s\|_{W^{-3,q}_p(\Xd)}\,\ddr s+\kappa\int_0^te^{-\lambda_0p(t-s)}\|G_s^{N,2}-\tfrac1NF^{N,2}_s\|_{W^{-2,q}_p(\Xd)^{\otimes2}}\,\ddr s.
\end{multline*}
Now inserting the bound~\eqref{eq:step1suboptimal} of Step~1 for $G^{N,2}$, using~\eqref{eq:control_QF2} together with $Q_{2p}(F^{N,1}_t) \lesssim 1$, we obtain
\begin{multline*}
\|F^{N,1}_t - \mu_t\|_{W^{-3,q}_p(\Xd)}
\,\le\,CN^{-1}
+C\kappa\int_0^te^{-\lambda_0p(t-s)}\|F_s^{N,1}-\mu_s\|_{W^{-3,q}_p(\Xd)}\,\ddr s\\
+Ce^{-\lambda_0pt}\Big(\|F_\circ^{N,1}-\mu_\circ\|_{W^{-2,q}_{p}(\Xd)}+\|F_\circ^{N,1}-\mu_\circ\|_{W^{-2,q}_{3p}(\Xd)}^2+\|G_\circ^{N,2}\|_{W^{-2,q}_{3p}(\Xd)^{\otimes2}}\Big).
\end{multline*}
Provided that $\kappa$ is chosen small enough so that $C\kappa<\frac12\lambda_0p$, the claim~\eqref{eq:step2optimal} follows from Gr\"onwall's inequality.

\medskip\noindent
{\bf Step~3:} Conclusion.\\
Let $\varphi\in C^\infty_c(\Xd)$ be momentarily fixed with $
\|\langle z\rangle^{-p}\varphi\|_{W^{3,q'}(\Xd)}\le1$.
Moments of the empirical measure can be computed as follows, for all $m\ge1$,
\begin{eqnarray*}
\expecM{\Big(\int_\Xd\varphi(\mu^N_t-\mu_t)\Big)^m}
&=&N^{-m}\sum_{i_1,\ldots,i_m=1}^N\expecM{\prod_{l=1}^m\Big(\varphi(Z_t^{i_l,N})-{\int_\Xd}\varphi\mu_t\Big)}\\
&=&N^{-m}\sum_{\pi\vdash\llbracket m\rrbracket}N(N-1)\ldots(N-\sharp\pi+1)\int_{\Xd^{\sharp\pi}}\bigg(\bigotimes_{B\in\pi}\Big(\varphi-{\int_\Xd}\varphi\mu_t\Big)^{\sharp B}\bigg)F_t^{N,\sharp\pi},
\end{eqnarray*}
where we use the same notation as in~\eqref{eq:cluster-exp0} for sums over partitions.
Note that the moment bounds of Lemma~\ref{lem:mom} together with the Sobolev embedding and the choice of~$\varphi$ yield, for all $\pi\vdash\llbracket m\rrbracket$, mimicking~\eqref{eq:control_QF2},
\[\bigg|\int_{\Xd^{\sharp\pi}}\bigg(\bigotimes_{B\in\pi}\Big(\varphi-{\int_\Xd}\varphi\mu_t\Big)^{\sharp B}\bigg)F_t^{N,\sharp\pi}\bigg|\,\lesssim_m\,\int_{\Xd}\langle z\rangle^{mp}F^{N,1}_t\,\le\,(Cm)^\frac{mp}2+C^me^{-\lambda_0pmt}Q_{mp}(F^{N,1}_\circ)\,\lesssim_m\,1.\]
Hence, neglecting all $O(N^{-1})$ terms in the above, we get
\begin{equation*}
\bigg|\E\bigg[\Big(\int_{\Xd} \varphi(\mu^N_t-\mu_t)\Big)^{m}\bigg]
\,-\,\int_{\Xd^m} \Big(\varphi-\int_\Xd\varphi\mu_t\Big)^{\otimes m}F_t^{N,m}\bigg|\,\lesssim_m\, N^{-1}.
\end{equation*}
Noting that
\begin{eqnarray*}
\int_{\Xd^m} \Big(\varphi-\int_\Xd\varphi\mu_t\Big)^{\otimes m}F_t^{N,m}&=&\sum_{j=0}^m\binom{m}j(-1)^{m-j}\Big(\int_\Xd\varphi\mu_t\Big)^{m-j}\int_{\Xd^j} \varphi^{\otimes j}F_t^{N,j}\\
&=&\sum_{j=1}^m\binom{m}j(-1)^{m-j}\Big(\int_\Xd\varphi\mu_t\Big)^{m-j}\int_{\Xd^j} \varphi^{\otimes j}(F_t^{N,j}-\mu_t^{\otimes j}),
\end{eqnarray*}
and focusing on the term corresponding to $j=m$ in the sum, we deduce
\begin{equation}\label{eq:estim-mutN-mut}
\bigg|\E\bigg[\Big(\int_{\Xd} \varphi(\mu^N_t-\mu_t)\Big)^{m}\bigg]
\,-\,\int_{\Xd^m} \varphi^{\otimes m}(F_t^{N,m}-\mu_t^{\otimes m})\bigg|\,\lesssim_m\, N^{-1}+\sum_{j=1}^{m-1}\Big|\int_{\Xd^j} \varphi^{\otimes j}(F_t^{N,j}-\mu_t^{\otimes j})\Big|.
\end{equation}
Taking the supremum over $\varphi \in C^{\infty}_c(\Xd)$ with $\|\langle z\rangle^{-p}\varphi\|_{W^{3,q'}(\Xd)} \le 1$, we deduce in particular for all~\mbox{$m\ge1$,}
\begin{equation}\label{eq:estim-Ftn-AtN}
\|F_t^{N,m}-\mu_t^{\otimes m}\|_{W_p^{-3,q}(\Xd)^{\otimes m}}
\,\lesssim_m\, N^{-1}+A_{p,q}^{N,m}(t)+\sum_{j=1}^{m-1}\|F_t^{N,j}-\mu_t^{\otimes j}\|_{W_p^{-3,q}(\Xd)^{\otimes j}},
\end{equation}
and thus, after a direct iteration,
\begin{equation*}
\|F_t^{N,m}-\mu_t^{\otimes m}\|_{W_p^{-3,q}(\Xd)^{\otimes m}}
\,\lesssim_m\, N^{-1}+\sum_{j=2}^m A_{p,q}^{N,j}(t)+\|F_t^{N,1}-\mu_t\|_{W^{-3,q}_p(\Xd)}.
\end{equation*}
Inserting the result of Proposition~\ref{prop:ito} and the result~\eqref{eq:step2optimal} of Step~2, we then get
\begin{multline}\label{eq:estim-pre-FNm-mum}
\|F_t^{N,m}-\mu_t^{\otimes m}\|_{W_p^{-3,q}(\Xd)^{\otimes m}}
\,\lesssim_m\, N^{-1}
+\sum_{j=2}^m e^{-\lambda_0pjt}A_{3p,q}^{N,j}(0)\\
+e^{-\frac12\lambda_0pt}\Big(\|F_\circ^{N,1}-\mu_\circ\|_{W^{-2,q}_{3p}(\Xd)}+\|G^{N,2}_\circ\|_{W^{-2,q}_{3p}(\Xd)^{\otimes2}}\Big).
\end{multline}
From here, it remains to express the $A_{3p,q}^{N,j}(0)$'s in terms of initial correlations. Noting that
\begin{equation}\label{eq:estim-ANm-mom0}
\E\bigg[\Big|\int_\Xd\varphi(\mu^N-\mu)\Big|^j\bigg]\,\le\,\left\{\begin{array}{lll}
\E\Big[\big(\int_\Xd\varphi(\mu^N-\mu)\big)^j\Big]&:&j\,\text{even},\\[3mm]
\E\Big[\big(\int_\Xd\varphi(\mu^N-\mu)\big)^{j-1}\Big]^\frac12\E\Big[\big(\int_\Xd\varphi(\mu^N-\mu)\big)^{j+1}\Big]^\frac12&:&j\,\text{odd},
\end{array}\right.
\end{equation}
using~\eqref{eq:estim-mutN-mut} at $t=0$, and taking the supremum over $\varphi \in C^{\infty}_c(\Xd)$ with $\|\langle z \rangle^{-3p} \varphi\|_{W^{2,q'}(\Xd)} \le 1$, we find for all $j\ge2$,
\begin{equation*}
A^{N,j}_{3p,q}(0)\,\lesssim_j\,
N^{-1}+\sum_{l=1}^{\tilde j}\|F_\circ^{N,l}-\mu_\circ^{\otimes l}\|_{W^{-2,q}_{3p}(\Xd)^{\otimes l}},\qquad \tilde j\,:=\,j+\mathds1_{j\,\text{odd}}.
\end{equation*}
Inserting this into~\eqref{eq:estim-pre-FNm-mum}, we get
\begin{equation*}
\|F_t^{N,m}-\mu_t^{\otimes m}\|_{W_p^{-3,q}(\Xd)^{\otimes m}}
\,\lesssim_m\, N^{-1}
+e^{-\frac12\lambda_0pt}\|G^{N,2}_\circ\|_{W^{-2,q}_{3p}(\Xd)^{\otimes2}}
+\sum_{j=1}^{\tilde m} e^{-\frac12\lambda_0pjt}\|F_\circ^{N,j}-\mu_\circ^{\otimes j}\|_{W^{-2,q}_{3p}(\Xd)^{\otimes j}},
\end{equation*}
with $\tilde m=m+\mathds1_{m\,\text{odd}}$.
Decomposing
\begin{equation}\label{eq:decomp-G2}
G_\circ^{N,2}=(F_\circ^{N,2}-\mu_\circ^{\otimes2})-(F_\circ^{N,1}-\mu_\circ)\otimes F_\circ^{N,1}-\mu_\circ\otimes (F_\circ^{N,1}-\mu_\circ),
\end{equation}
the conclusion~\eqref{eq:res-creation} follows for all $m\ge1$ (up to renaming $\lambda_0$).
\end{proof}

\section{Correlation estimates}\label{sec:correl}

This section is devoted to the proof of Theorem~\ref{th:estim-GNm}.
Similarly as for Theorem~\ref{th:creation}, our argument starts from the estimates of Proposition~\ref{prop:ito}: we show how these can be used to deduce suboptimal estimates on {correlations}, which in turn can be made optimal in $N$ after combination with direct estimates on the BBGKY hierarchy. Note that this time we will need to iterate BBGKY estimates multiple times to achieve optimality.
We split the proof into the next three subsections.

While in the introduction we defined correlation functions so as to satisfy the cluster expansion~\eqref{eq:cluster-exp0}, we recall that they can be defined more explicitly as polynomial combinations of marginals:
\begin{eqnarray*}
G^{N,1}&=&F^{N,1},\\
G^{N,2}&=&F^{N,2}-F^{N,1}\otimes F^{N,1},\\
G^{N,3}&=&\Sym\big(F^{N,3}-3F^{N,2}\otimes F^{N,1}+2(F^{N,1})^{\otimes 3}\big),\\
G^{N,4}&=&\Sym\big(F^{N,4}-4F^{N,3}\otimes F^{N,1}-3F^{N,2}\otimes F^{N,2}+12F^{N,2}\otimes (F^{N,1})^{\otimes 2}-6(F^{N,1})^{\otimes 4} \big),
\end{eqnarray*}
and so on, where the symbol `$\Sym$' stands for the symmetrization of coordinates. More generally, we can write for all $2\le m\le N$,
\begin{equation}\label{eq:def-cumGm}
G^{N,m}(z_1,\ldots,z_m)\,=\,\sum_{\pi\vdash \llbracket m \rrbracket}(\sharp\pi-1)!(-1)^{\sharp\pi-1}\prod_{B\in\pi}F^{N,\sharp B}(z_B),
\end{equation}
where we use a similar notation as in~\eqref{eq:cluster-exp0} for sums over partitions and where $\sharp\pi$ stands for the number of blocks in a partition~$\pi$.
We also use the following abbreviation for functional spaces: for all $r,p, q,m$,
\[W_{p,m}^{-r,q} \,:=\, W_p^{-r,q}(\Xd)^{\otimes m}.\]

\subsection{From moment mean-field estimates to correlations}
From now on, we let $\mu$ be the weak solution of the mean-field equation~\eqref{eq:VFP} with initial condition
\[\mu_\circ\,:=\,F^{N,1}_\circ.\]
In this section, we examine how the correlation functions $\{G^{N,m}\}_{2\le m\le N}$ can be related to the moments of $\mu_t^N-\mu_t$ in terms of the $A_{p,q}^{N,m}$'s defined in~\eqref{eq:def-ANm}, and we show that Proposition~\ref{prop:ito} implies the following hierarchy of estimates on correlations with suboptimal $N$-dependence ($N^{-m/2}$ instead of~$N^{1-m}$, cf.~\eqref{eq:refined-prop-chaos}).

\begin{prop}
\label{prop:basic_control_Gm}
Let $\lambda_0,p,q,\kappa$ be as in Proposition~\ref{prop:ito} with $\mu_\circ=F^{N,1}_\circ$.
Then we have for all $2\le m\le N$, $r \ge 2$, $0<p_0\le p$, and~$t\ge0$,
\begin{multline*}
\|G^{N,m}_t\|_{W^{-r,q}_{p_0,m}}\,\lesssim_{m}\,N^{-\frac m2}
+\sum_{1\le k\le l\le m-1}\sum_{j_1,\ldots,j_k\ge1\atop j_1+\ldots+j_k=l}N^{l-k-m+1}\prod_{i=1}^k\|G_t^{N,j_i}\|_{W^{-r,q}_{mp_0,j_i}}\\
+e^{-\lambda_0pmt}\times\left\{\begin{array}{lll}
\sum_{\ell=1}^{m/2}\sum_{\ell\le K\le L\le m}\sum_{j_1, \ldots, j_{K} \ge 1 \atop j_1 + \ldots j_{K} = L} N^{L-K-m+\ell} \prod_{i=1}^{K} \|G^{N,j_i}_\circ\|_{W^{-2,q}_{3mp, j_i}}&:&\text{$m$ even},\\
\sum_{\ell=1}^m\sum_{\ell\le K\le L\le2m}\sum_{1\le j_1,\ldots,j_K\le m+1\atop j_1+\ldots+j_K=L}\Big(N^{L-K-2m+\ell}\prod_{i=1}^K\|G_\circ^{N,j_i}\|_{W^{-2,q}_{3mp,j_i}}\Big)^{\frac12}&:&\text{$m$ odd},\\
\end{array}\right.
\end{multline*}
where the multiplicative constant only depends on $d,\beta,a,p_0,p,q,r,m$, $Q_2(F^{N,1}_\circ)$, and $\|W\|_{W^{d+3,\infty}\cap H^s(\R^d)}$ for some $s>\frac d2+3$.
\end{prop}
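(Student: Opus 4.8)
The plan is to express each correlation function $G^{N,m}$ in terms of the empirical moments $\E[(\int\varphi(\mu^N_t-\mu_t))^j]$ controlled by Proposition~\ref{prop:ito}, keeping careful track of the combinatorial factors $N^{-1}$ arising from the difference between $F^{N,j}$ and the product structure of the empirical measure. Concretely, one starts from the identity computed in the proof of Theorem~\ref{th:creation}:
\begin{equation*}
\E\bigg[\Big(\int_{\Xd}\varphi(\mu^N_t-\mu_t)\Big)^m\bigg]\,=\,N^{-m}\sum_{\pi\vdash\llbracket m\rrbracket}\frac{N!}{(N-\sharp\pi)!}\int_{\Xd^{\sharp\pi}}\bigotimes_{B\in\pi}\Big(\varphi-{\int}\varphi\mu_t\Big)^{\sharp B}F_t^{N,\sharp\pi},
\end{equation*}
and inverts this relation using the cluster expansion~\eqref{eq:def-cumGm}. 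The key point is that the leading term in $\E[(\int\varphi(\mu^N_t-\mu_t))^m]$, after subtracting lower-order pieces, reproduces $\int\varphi^{\otimes m}G^{N,m}_t$ up to (i) terms involving $G^{N,j}_t$ with $j<m$ multiplied by explicit powers of $N^{-1}$ coming from the falling factorials $\frac{N!}{(N-\sharp\pi)!}=N^{\sharp\pi}(1+O(1/N))$, and (ii) an $O(N^{-m/2})$ remainder directly from $A^{N,m}_{p_0,q}(t)$.

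The first main step is therefore a purely algebraic/combinatorial lemma: writing the falling factorial as $\frac{N!}{(N-k)!}=\sum_{\ell}s(k,\ell)N^\ell$ (Stirling numbers) and expanding, one obtains an identity of the schematic form
\begin{equation*}
\int_{\Xd^m}\varphi^{\otimes m}G^{N,m}_t\,=\,\E\bigg[\Big(\int\varphi(\mu^N_t-\mu_t)\Big)^m\bigg]+\sum_{1\le k\le l\le m-1}\sum_{j_1+\dots+j_k=l}c_{\vec j}N^{l-k-m+1}\int\varphi^{\otimes j_1}G^{N,j_1}_t\cdots\int\varphi^{\otimes j_k}G^{N,j_k}_t+\text{(products with }F^{N,1}\text{ factors)},
\end{equation*}
where the $F^{N,1}$ factors are absorbed using the moment bound $Q_{mp_0}(F^{N,1}_t)\lesssim_m 1$ from Lemma~\ref{lem:mom} exactly as in~\eqref{eq:control_QF2}. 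Taking the supremum over $\varphi$ with $\|\langle z\rangle^{-p_0}\varphi\|_{W^{r,q'}}\le1$ and bounding $A^{N,m}_{p_0,q}(t)\le A^{N,m}_{p,q}(t)\lesssim (C_0m/N)^{m/2}+C_0^me^{-\lambda_0pmt}A^{N,m}_{3p,q}(0)$ via Proposition~\ref{prop:ito} then yields the first two terms of the claimed bound, together with an initial term $e^{-\lambda_0pmt}A^{N,m}_{3p,q}(0)$.

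The second main step is to re-express $A^{N,m}_{3p,q}(0)$ back in terms of initial correlations $\|G^{N,j}_\circ\|$, which produces the final (complicated) term in the statement. For $m$ even this is done directly: applying the same algebraic identity at $t=0$ to $\E[(\int\varphi(\mu^N-\mu))^m]$ and now solving the \emph{other} direction (moments in terms of correlations via the cluster expansion~\eqref{eq:cluster-exp0} with $G^{N,1}_\circ=\mu_\circ$), one gets the sum over $\ell\le K\le L\le m$ with the prefactor $N^{L-K-m+\ell}$; the index $\ell$ counts the number of blocks of size one being discarded, giving the $m/2$ range. For $m$ odd one cannot directly control $\E[|\,\cdot\,|^m]$ by $\E[(\cdot)^m]$ since the latter need not be nonnegative, so one uses Cauchy--Schwarz as in~\eqref{eq:estim-ANm-mom0}, $\E[|X|^m]\le\E[X^{m-1}]^{1/2}\E[X^{m+1}]^{1/2}$, reducing to the even cases $m-1$ and $m+1$; this explains both the square roots and the doubled range $L\le 2m$, $j_i\le m+1$ in the odd case. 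Throughout, all $O(N^{-1})$ error terms generated along the way are dominated by $N^{-m/2}$ since $m\ge2$.

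The main obstacle I expect is purely bookkeeping: carefully verifying the \emph{exponents} of $N$ in the combinatorial expansion — i.e.\ that each product $\prod_i\int\varphi^{\otimes j_i}G^{N,j_i}$ with $\sum j_i=l$ and $k$ factors indeed comes with a power $N^{l-k-m+1}$ (resp.\ the stated exponents at $t=0$), and that no term is more singular than advertised. This requires systematically tracking, for each set partition $\pi$, the size distribution of its blocks against the Stirling-number expansion of the falling factorial, and then matching against the Möbius inversion defining $G^{N,m}$. The analytic input (ergodic decay, Sobolev embeddings, moment bounds) is entirely inherited from Lemmas~\ref{lem:ergodic}--\ref{lem:mom} and Proposition~\ref{prop:ito}; no new estimates of that type are needed here.
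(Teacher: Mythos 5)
Your overall architecture is the same as the paper's (reduce the time-$t$ part to $A^{N,m}_{p,q}(t)$ via Proposition~\ref{prop:ito}, then convert $A^{N,m}_{3p,q}(0)$ back into initial correlations, using the Cauchy--Schwarz reduction~\eqref{eq:estim-ANm-mom0} for odd $m$), but the central ``algebraic/combinatorial lemma'' you defer as bookkeeping is false as stated, and this is where the real difficulty sits. Expand $\E[(\int_\Xd\varphi(\mu^N_t-\mu_t))^m]$ over coincidence patterns and cluster-expand the marginals: every resulting term is a product of factors of the form $\int(\bigotimes_{B\in D}\psi^{\sharp B})\,G^{N,\sharp D}_t$ with $\psi=\varphi-\int_\Xd\varphi\,\mu_t$, and the terms in which a factor $\int_\Xd\psi\, G^{N,1}_t=\int_\Xd\varphi\,(F^{N,1}_t-\mu_t)$ occurs come with coefficients $N^{-m}N!/(N-\sharp\pi)!=1+O(N^{-1})$, i.e.\ with \emph{no} $N$-suppression. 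Concretely, for $m=2$ one finds $(\int_\Xd\varphi\,(F^{N,1}_t-\mu_t))^2$ and for $m=3$ a term $3(1+O(N^{-1}))\int_\Xd\varphi\,(F^{N,1}_t-\mu_t)\cdot\int_{\Xd^2}\varphi^{\otimes2}G^{N,2}_t$. These ``disconnected'' contributions are neither of the allowed form $N^{l-k-m+1}\prod_i\|G^{N,j_i}_t\|$ with $l\le m-1$ (they carry no negative power of $N$), nor can they be ``absorbed using $Q_{mp_0}(F^{N,1}_t)\lesssim_m1$'', which only yields an $O(1)$ bound, far from the claimed $N^{-m/2}+e^{-\lambda_0pmt}(\cdots)$. (At $t=0$ these factors vanish because $\mu_\circ=F^{N,1}_\circ$, but at positive times they do not.) So your schematic identity, with a single raw moment on the right-hand side plus $N$-suppressed correlation products plus harmless $F^{N,1}$-moment factors, does not hold.

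The paper's resolution is precisely to work with \emph{cumulants} rather than raw moments: by translation invariance $\kappa^m[\int_\Xd\varphi\,\mu^N_t]=\kappa^m[\int_\Xd\varphi\,(\mu^N_t-\mu_t)]$, the cumulant--correlation relation~\eqref{eq:link-cum-G} (from~\cite[Lemma~2.6]{BD_2024}) guarantees that \emph{every} correction term between $\kappa^m$ and $\int_{\Xd^m}\varphi^{\otimes m}G^{N,m}_t$ carries the factor $N^{\sharp\pi-\sharp\rho-m+1}\le N^{-1}$, while the disconnected moment products that pollute your expansion are exactly what the cumulant subtracts; they are then controlled in one stroke through $|\kappa^m[X]|\lesssim_m\E[|X|^m]\le A^{N,m}_{p,q}(t)$, cf.~\eqref{eq:cumfrommom-inv}. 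If you reorganize your M\"obius inversion so that the full alternating combination $\sum_{\pi}(-1)^{\sharp\pi-1}(\sharp\pi-1)!\prod_{B\in\pi}\E[X_c^{\sharp B}]$ appears and is bounded by $\E[|X_c|^m]$ via H\"older, you essentially rederive that lemma; without this cancellation the proof does not close in the stated form. Two smaller points: your remark that all $O(N^{-1})$ errors are dominated by $N^{-m/2}$ is backwards for $m\ge3$ (such terms are kept explicitly in the first sum), and at $t=0$ the range $\ell\le m/2$ arises because $\kappa^1[\int_\Xd\varphi(\mu^N_0-\mu_\circ)]=0$ forces all retained blocks to have size at least~$2$, rather than from ``counting discarded singletons'' per se.
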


\begin{proof}
As a starting point, we recall the standard link between cumulants of the empirical measure and correlation functions.
We first used this link in~\cite[Section~4]{MD-21} and we further included a self-contained statement and proof in~\cite[Lemma~2.6]{BD_2024}. It can be stated as follows: for all $\varphi\in C^\infty_c(\Xd)$, $1\le m\le N$, and~$t \ge 0$,
\begin{multline}\label{eq:link-cum-G}
\bigg|\kappa^m\Big[\int_\Xd\varphi\mu^N_t\Big]
-\int_{\Xd^{m}}\varphi^{\otimes m} G^{N,m}_t\bigg|\\
\,\lesssim_m\,\sum_{\pi\vdash\llbracket m\rrbracket \atop \sharp\pi<m}\sum_{\rho\vdash\pi} N^{\sharp\pi-\sharp\rho-m+1}\bigg|\int_{\Xd^{\sharp\pi}}\Big(\bigotimes_{B\in\pi}\varphi^{\sharp B}\Big)\Big(\bigotimes_{D\in\rho} G^{N,\sharp D}_t(z_D)\Big)\,\ddr z_\pi\bigg|,
\end{multline}
where $\kappa^m[\cdot]$ stands for the $m$-th cumulant.
For $m\ge2$, as cumulants are invariant under translation, we can write $\kappa^m[\int_\Xd\varphi\mu_t^N]=\kappa^m[\int_\Xd\varphi(\mu_t^N-\mu_t)]$. By the definition of cumulants in terms of moments, we also recall for any random variable $X$,
\begin{equation}\label{eq:cumfrommom-inv}
\kappa^m[X]\,=\,\sum_{\pi\vdash \llbracket m \rrbracket}(-1)^{\sharp\pi-1}(\sharp\pi-1)!\prod_{B\in\pi}\E[X^{\sharp B}]\,\lesssim_m\,\E[|X|^m].
\end{equation}
Hence, from the above, we deduce in particular for all $\varphi\in C^\infty_c(\Xd)$, $2\le m\le N$, and $t\ge0$,
\begin{multline*}
\Big|\int_{\Xd^{m}}\varphi^{\otimes m}G^{N,m}_t\Big|
\,\lesssim_m\,\E\bigg[\Big|\int_\Xd\varphi(\mu^N_t-\mu_t)\Big|^m\bigg]\\
+\sum_{\pi\vdash\llbracket m\rrbracket \atop \sharp\pi<m}\sum_{\rho\vdash\pi} N^{\sharp\pi-\sharp\rho-m+1}\bigg|\int_{\Xd^{\sharp\pi}}\Big(\bigotimes_{B\in\pi}\varphi^{\sharp B}\Big)\Big(\bigotimes_{D\in\rho} G^{N,\sharp D}_t(z_D)\Big)\,\ddr z_\pi\bigg|.
\end{multline*}
For $r\ge2$, $1<q\le 2$ with $q'>d$, and $0<p_0\le p\le1$, taking the supremum over the test function~$\varphi$, recognizing the definition~\eqref{eq:def-ANm} of $A_{p_0,q}^{N,m}(t)$, noting that for $p_0\le p$ we have $A_{p_0,q}^{N,m}\le A_{p,q}^{N,m}$, and also noting that the Sobolev embedding implies
\begin{equation}\label{eq:Sob_embedding}
\|\langle z\rangle^{-p_0\ell}\varphi^\ell\|_{W^{r,q'}(\Xd)}\,\lesssim\,\|\langle z\rangle^{-p_0}\varphi\|_{W^{r,q'}(\Xd)}^\ell,\qquad \text{for~$\ell\ge1$,}
\end{equation}
we deduce for all $2\le m\le N$ and $t\ge0$,
\begin{equation*}
\|G^{N,m}_t\|_{W^{-r,q}_{p_0,m}}\,\lesssim_m\,A_{p,q}^{N,m}(t)+\sum_{1\le k\le l\le m-1}\sum_{j_1,\ldots,j_k\ge1\atop j_1+\ldots+j_k=l}N^{l-k-m+1}\prod_{i=1}^k\|G_t^{N,j_i}\|_{W^{-r,q}_{mp_0,j_i}}.
\end{equation*}
Hence, by Proposition~\ref{prop:ito}, for suitable $\lambda_0,p,q,\kappa$,
\begin{equation*}
\|G^{N,m}_t\|_{W^{-r,q}_{p_0,m}}\,\lesssim_m\,N^{-\frac m2}+e^{-\lambda_0pmt}A_{3p,q}^{N,m}(0)
+\sum_{1\le k\le l\le m-1}\sum_{j_1,\ldots,j_k\ge1\atop j_1+\ldots+j_k=l}N^{l-k-m+1}\prod_{i=1}^k\|G_t^{N,j_i}\|_{W^{-r,q}_{mp_0,j_i}}.
\end{equation*}
It remains to express $A_{3p,q}^{N,m}(0)$ in the right-hand side in terms of initial correlations.
For that purpose, we first appeal to the cluster expansion of moments in terms of cumulants: given $\varphi\in C^\infty_c(\Xd)$, by definition of cumulants, we can expand for all~$m\ge1$,
\[\E\bigg[\Big(\int_\Xd\varphi(\mu^N_0-\mu_\circ)\Big)^m\bigg]\,=\,\sum_{\pi\vdash\llbracket m\rrbracket}\prod_{B\in\pi}\kappa^{\sharp B}\Big[\int_\Xd\varphi(\mu^N_0-\mu_\circ)\Big].\]
With the choice $\mu_\circ=F^{N,1}_\circ$, we note that $\kappa^1[\int_\Xd(\mu_0^N-\mu_\circ)]=\E[\int_{\Xd}(\mu_0^N-\mu_\circ)]=0$, and we may thus restrict the sum to partitions $\pi$ such that $\sharp B>1$ for all $B\in\pi$.
Appealing to~\eqref{eq:link-cum-G} at $t=0$ in order to control cumulants back in terms of correlation functions, and taking the supremum over $\varphi$, we deduce
\begin{multline*}
\sup\bigg\{\E \bigg[ \Big( \int_{\Xd} \varphi(\mu^N_0 - \mu_\circ) \Big)^m \bigg]~:~\varphi\in C^\infty_c(\Xd),~\|\langle z\rangle^{-3p}\varphi\|_{W^{2,q'}(\Xd)}\le1\bigg\}\\ 
\,\lesssim\,\sum_{\ell =1}^m \sum_{j_1, \ldots, j_\ell \ge 2\atop j_1 + \ldots + j_\ell = m}\prod_{i=1}^\ell\bigg(\sum_{1 \le k \le l \le j_i} \sum_{n_1, \ldots, n_{k} \ge 1 \atop n_1 + \ldots n_{k} = l} N^{l - k - j_i +1} \prod_{s=1}^{k} \|G^{N,n_s}_\circ\|_{W^{-2,q}_{3mp, n_s}}\bigg).
\end{multline*}
In the right-hand side, we note that the sum over $j_1,\ldots,j_\ell\ge2$ with $j_1+\ldots+j_\ell=m$ requires $2\ell\le m$, so the first sum can be restricted accordingly. Reorganizing the sums, we are then led to
\begin{multline*}
\sup\bigg\{\E \bigg[ \Big( \int_{\Xd} \varphi(\mu^N_0 - \mu_\circ) \Big)^m \bigg]~:~\varphi\in C^\infty_c(\Xd),~\|\langle z\rangle^{-3p}\varphi\|_{W^{2,q'}(\Xd)}\le1\bigg\}\\[-1mm]
\,\lesssim\,\sum_{\ell=1}^{m/2}\sum_{\ell\le K\le L\le m}\sum_{j_1, \ldots, j_{K} \ge 1 \atop j_1 + \ldots + j_{K} = L} N^{L-K-m+\ell} \prod_{i=1}^{K} \|G^{N,j_i}_\circ\|_{W^{-2,q}_{3mp, j_i}}.
\end{multline*}
For $m$ even, this means
\begin{equation*}
A_{3p,q}^{N,m}(0)\,\lesssim\,\sum_{\ell=1}^{m/2}\sum_{\ell\le K\le L\le m}\sum_{j_1, \ldots, j_{K} \ge 1 \atop j_1 + \ldots j_{K} = L} N^{L-K-m+\ell} \prod_{i=1}^{K} \|G^{N,j_i}_\circ\|_{W^{-2,q}_{3mp, j_i}},
\end{equation*}
and for $m$ odd, after combination with~\eqref{eq:estim-ANm-mom0},
\begin{equation*}
A_{3p,q}^{N,m}(0)\,\lesssim\,\sum_{\ell=1}^m\sum_{\ell\le K\le L\le2m}\sum_{1\le j_1,\ldots,j_K\le m+1\atop j_1+\ldots+j_K=L}\bigg(N^{L-K-2m+\ell}\prod_{i=1}^K\|G_\circ^{N,j_i}\|_{W^{-2,q}_{3mp,j_i}}\bigg)^{\frac12}.
\end{equation*}
This concludes the proof.
\end{proof}

\subsection{BBGKY hierarchical estimates}
Starting from the BBGKY hierarchy for marginals of $F^N$, and recalling how marginals can be expanded in terms of correlations (and vice versa), we can derive a corresponding BBGKY hierarchy of equations for correlations.
Before stating it, we start by introducing some useful notation.

\begin{defin0}
Consider a collection $\{h^m\}_{1\le m\le N}$ of functions $h^m:\Xd^m\to\R$ such that for all $m$ the function $h^m$ is symmetric in its $m$ entries (such as $\{F^{N,m}\}_{1\le m\le N}$ or $\{G^{N,m}\}_{1\le m\le N}$).
\begin{enumerate}[---]
\item For $P\subset\llbracket m\rrbracket$ with $P\ne\varnothing$, we define $h^P:\Xd^m\to\R$ as
\[h^P(z_{\llbracket m\rrbracket})\,:=\,h^{\sharp P}(z_P),\]
and for $P=\varnothing$ we set $h^\varnothing:=0$.
\smallskip\item For $P\subset\llbracket m\rrbracket$ with $P\ne\llbracket N\rrbracket$, we define $h^{P\cup\{*\}}:\Xd^{m+1}\to\R$ as
\[h^{P\cup\{*\}}(z_{\llbracket m\rrbracket},z_*)\,:=\,h^{\sharp P+1}(z_P,z_*),\]
and for $P=\llbracket N\rrbracket$ we set $h^{\llbracket N\rrbracket\cup\{*\}}=0$.
\smallskip\item For $P\subset\llbracket m\rrbracket$ and $k,\ell\in P$, we define 
\[S_{k,\ell}h^P\,:=\,\nabla W(x_k - x_\ell) \cdot \nabla_{v_k} h^P.\]
\item For $P\subset\llbracket m\rrbracket$ and $k\in P$, we define
\[H_kh^{P\cup\{*\}}(z_{\llbracket m\rrbracket})\,:=\,\int_\Xd\nabla W(x_k-x_*)\cdot\nabla_{v_k}h^{P\cup\{*\}}(z_{\llbracket m\rrbracket},z_*)\,\ddr x_*.\]
\end{enumerate}
\end{defin0}

With this notation, we may now formulate the BBGKY hierarchy of equations satisfied by correlation functions. To the best of our knowledge, such hierarchies for correlations were first written down by Ernst and Cohen~\cite{Ernst_Cohen_1981} in the context of Boltzmann-type systems. 
We refer to~\cite[Section~4]{Hess_Childs_2023} for the derivation in the case of the overdamped dynamics~\eqref{eq:Brownian_system}: lengthy but straightforward algebraic manipulations are needed to collect the different factors in the correlation hierarchy. 
For convenience, we shall write $A-B=A\setminus B$ for set difference, with for instance $A-B-C=A\setminus(B\cup C)$ and $A\cup B-C=(A\cup B)\setminus C$.

\begin{lem}[Correlation hierarchy, e.g.~\cite{Hess_Childs_2023}]\label{lem:hier-corr}
For fixed $N$, correlation functions $\{G^{N,m}\}_{1 \le m\le N}$ satisfy the following hierarchy of equations: for all $1\le m\le N$,
\begin{align*}
\partial_t G^{N,m} =&\, L_{N,m} G^{N,m} + \kappa \frac{N-m}{N} \sum_{k=1}^m H_k G^{N,\llbracket m \rrbracket \cup \{\ast\}}\\
&- \kappa \sum_{k=1}^m \sum_{A \subset \llbracket m \rrbracket - \{k\}} \frac{m-1-\sharp A}{N} H_k\big(G^{N,A \cup \{k, \ast\}} G^{N,\llbracket m \rrbracket - \{k\} - A} \big)\\
&+ \kappa \frac{N-m}{N} \sum_{k=1}^m \sum_{A \subsetneq \llbracket m \rrbracket - \{k\}\atop A\ne\varnothing} H_k\big( G^{N,A \cup \{k\}} G^{N,\llbracket m \rrbracket \cup \{\ast\} - A - \{k\}} \big) \\
&- \kappa \sum_{k=1}^m \sum_{A \subset \llbracket m\rrbracket - \{k\}} \sum_{B\subset \llbracket m\rrbracket - \{k\} - A} \frac{m-1-\sharp A-\sharp B}{N} H_k\big( G^{N,A \cup \{k\}} G^{N,B \cup \{*\}} G^{N,\llbracket m\rrbracket - A-B - \{k\}} \big) \\
&+ \frac{\kappa}{N} \sum_{k, \ell = 1}^m S_{k,\ell} G^{N,\llbracket m \rrbracket} + \frac{\kappa}{N} \sum_{k \ne \ell}^m \sum_{A \subset \llbracket m\rrbracket - \{k , \ell \}} S_{k, \ell}\big( G^{N,A \cup \{k\}} G^{N,\llbracket m\rrbracket - A - \{k\}}\big) \\
&+ \kappa \frac{N-m}{m} \sum_{k=1}^m H_k \Big( (F^{N,\{*\}}-\mu(z_*))\, G^{N,\llbracket m \rrbracket} + (F^{N,\{k\}}-\mu(z_k))\, G^{N, \llbracket m \rrbracket \cup \{*\}-\{k\}} \Big),
\end{align*}
where $L_{N,m}$ stands for the following (non-autonomous) $m$-particle linearized mean-field operator,
\begin{eqnarray}
L_{N,m}&:=&\sum_{k=1}^m\,\Id^{\otimes k-1}\otimes L_{N,m}^\circ\otimes\Id^{\otimes m-k},\label{eq:def-LNm}\\
L_{N,m}^\circ h&:=&\tfrac12\Div_v((\nabla_v+\beta v)h)-v\cdot\nabla_xh+\Big(\nabla A+\kappa\frac{N-m}N\nabla W\ast \mu\Big)\cdot\nabla_vh\nonumber\\
&&\hspace{5cm}+\kappa\frac{N-m}N\nabla_{v}\mu\cdot\int_{\Xd}\nabla W(x-x_*)h(z_*)\,\ddr z_*.\nonumber
\end{eqnarray}
\end{lem}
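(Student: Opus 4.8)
The plan is to obtain the correlation hierarchy by first writing the standard BBGKY hierarchy for the marginals and then passing to correlations through the (linked-cluster) relations \eqref{eq:cluster-exp0} and \eqref{eq:def-cumGm}. First I would integrate the Liouville equation \eqref{eq:Liouville} over the last $N-m$ variables $z_{m+1},\dots,z_N$. Using the decay of $F^N$, all the Fokker--Planck, confinement and transport terms with index $i>m$ integrate out, while in the interaction term $\tfrac\kappa N\sum_{i,j}\nabla W(x_i-x_j)\cdot\nabla_{v_i}F^N$ one separates the contributions with $i,j\le m$ (which give $\tfrac\kappa N\sum_{k,\ell=1}^mS_{k,\ell}F^{N,m}$) from those with $i\le m<j$ (which, by exchangeability, give $\kappa\tfrac{N-m}N\sum_{k=1}^mH_kF^{N,\llbracket m\rrbracket\cup\{*\}}$), the ones with $i>m$ vanishing upon integration. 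This yields the closed form
\[
\partial_tF^{N,m}=\mathcal L_m^{\mathrm{free}}F^{N,m}+\tfrac\kappa N\sum_{k,\ell=1}^mS_{k,\ell}F^{N,m}+\kappa\tfrac{N-m}N\sum_{k=1}^mH_kF^{N,\llbracket m\rrbracket\cup\{*\}},
\]
where $\mathcal L_m^{\mathrm{free}}h:=\sum_{k=1}^m\big(\tfrac12\Div_{v_k}((\nabla_{v_k}+\beta v_k)h)-v_k\cdot\nabla_{x_k}h+\nabla A(x_k)\cdot\nabla_{v_k}h\big)$ is a one-body operator.

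Next I would substitute. The cleanest route is to differentiate the Möbius inversion \eqref{eq:def-cumGm} in time and insert the above BBGKY equation for each $\partial_tF^{N,\sharp B}$; equivalently, plug the cluster expansion \eqref{eq:cluster-exp0} into the BBGKY equation for $F^{N,m}$ and project onto the connected component. Because $\mathcal L_m^{\mathrm{free}}$ is a one-body operator, it distributes over the cluster structure without creating new terms, producing the first two pieces of $L_{N,m}^\circ$ (before the $\mu$-splitting below). All the combinatorics then sits in the two interaction terms: for $\tfrac\kappa NS_{k,\ell}F^{N,m}$ one sorts the partitions of $\llbracket m\rrbracket$ according to whether $k$ and $\ell$ lie in the same block, producing a connected term and a once-disconnected term; for $\kappa\tfrac{N-m}NH_kF^{N,\llbracket m\rrbracket\cup\{*\}}$ one sorts the partitions of $\llbracket m\rrbracket\cup\{*\}$ according to (i) whether the block of $*$ also contains $k$, (ii) the subset $A\subset\llbracket m\rrbracket-\{k\}$ that it contains, and (iii) how the complement is further partitioned, which generates the four families of terms with the rational prefactors $\tfrac{N-m}N$, $\tfrac{m-1-\sharp A}N$ and $\tfrac{m-1-\sharp A-\sharp B}N$ (the numerators counting, via exchangeability, how many of the remaining indices the relevant blocks may absorb). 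Finally, to exhibit the linearized mean-field operator $L_{N,m}$ of \eqref{eq:def-LNm}, I would isolate the subterms of $\kappa\tfrac{N-m}NH_kF^{N,\llbracket m\rrbracket\cup\{*\}}$ in which the outer particle appears either as the singleton block $F^{N,\{*\}}$ multiplying $G^{N,\llbracket m\rrbracket}$, or paired only with $k$; writing $F^{N,1}(z_*)=\mu(z_*)+(F^{N,1}(z_*)-\mu(z_*))$ and $F^{N,1}(z_k)=\mu(z_k)+(F^{N,1}(z_k)-\mu(z_k))$, the $\mu$-parts supply exactly the local drift $\kappa\tfrac{N-m}N\nabla W\ast\mu\cdot\nabla_v$ and the nonlocal term $h\mapsto\kappa\tfrac{N-m}N\nabla_v\mu\cdot\int\nabla W(x-x_*)h(z_*)\,\ddr z_*$ completing $L_{N,m}^\circ$, while the leftover $(F^{N,1}-\mu)$-parts are precisely the last line of the claimed hierarchy.

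The real content — and the only obstacle — is the combinatorial bookkeeping: checking that every partition contribution lands in exactly one of the listed families with the stated coefficient and that nothing is left over. This is the ``lengthy but straightforward'' computation already carried out, for the overdamped hierarchy \eqref{eq:Liouville_Brownian}, in \cite[Section~4]{Hess_Childs_2023} (and going back to \cite{Ernst_Cohen_1981, MD-21, BD_2024} in related settings). The kinetic case \eqref{eq:Liouville} is structurally identical under the dictionary: replace the position gradient $\nabla_{x_k}$ of the overdamped interaction operators by the velocity gradient $\nabla_{v_k}$ (so that $S_{k,\ell}$ and $H_k$ take the stated form), replace $\tfrac12\triangle_{x_k}$ by the Fokker--Planck generator $\tfrac12\Div_{v_k}((\nabla_{v_k}+\beta v_k)\,\cdot\,)$, and add the free-streaming term $-v_k\cdot\nabla_{x_k}$; since these extra one-body terms do not interact with the cluster combinatorics, the coefficients are unchanged and the derivation of \cite{Hess_Childs_2023} transfers verbatim. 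Accordingly I would present only this dictionary together with the $\mu$-splitting in detail, and refer to \cite{Hess_Childs_2023} for the core algebra.
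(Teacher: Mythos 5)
The paper gives no self-contained proof of this lemma: it simply refers to \cite[Section~4]{Hess_Childs_2023} for the overdamped derivation and asserts that the kinetic adaptation is straightforward, which is exactly your route — you spell out the standard scaffolding (marginal BBGKY from the Liouville equation, cluster expansion/M\"obius inversion, sorting of partitions, and the splitting $F^{N,1}=\mu+(F^{N,1}-\mu)$ producing $L_{N,m}$ plus the remainder line) and defer the core combinatorics to the same reference, so your proposal is correct and essentially the paper's approach. Incidentally, your derivation yields the prefactor $\kappa\tfrac{N-m}{N}$ in the last line, whereas the statement prints $\kappa\tfrac{N-m}{m}$; this appears to be a typo in the paper rather than a gap in your argument.
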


Starting from this hierarchy of equations for correlations, combined with ergodic estimates for the linearized mean-field evolution, we establish the following a priori estimates.

\begin{prop}\label{prop:advance_control_GNm}
There is $\lambda_0>0$ (only depending on $d,\beta,a$, $\|W\|_{W^{1,\infty}(\R^d)}$) such that the following holds: given $1<q\le2$ and $0<p\le \frac16$ with $pq'\gg1$ large enough (only depending on $d,\beta,a$), provided that $0<\kappa\ll1$ is small enough (further depending on $p,q$, $Q_2(F_\circ^{N,1})$, and on $\|W\|_{W^{d+3,\infty}\cap H^s(\R^d)}$ for some $s>\frac d2+5$), we have for all $2\le m\le N$, $r \geq 4$, and $t\ge0$,
\begin{multline}
\label{eq:prop_advance_control_G}
\|G^{N,m}_t\|_{W^{-r,q}_{p,m}} \,\lesssim\,e^{-\lambda_0p mt}\|G^{N,m}_\circ\|_{W^{-r,q}_{p,m}}
+\int_0^te^{-\lambda_0 pm(t-s)}\bigg(\|G^{N,m+1}_s\|_{W^{1-r,q}_{p,m+1}}\\
+\sum_{i_1+i_2=m+1\atop 1\le i_1,i_2\le m-1}\|G_s^{N,i_1}\|_{W^{1-r,q}_{p,i_1}} \|G_s^{N,i_2}\|_{W^{1-r,q}_{p,i_2}}
+N^{-1}\!\!\!\sum_{i_1+i_2=m\atop 1\le i_1,i_2\le m-1}\|G^{N,i_1}_s\|_{W^{1-r,q}_{p,i_1}}\|G^{N,i_2}_s\|_{W^{1-r,q}_{p,i_2}}\\
+N^{-1}\!\!\! \sum_{i_1+i_2+i_3=m+1\atop 1\le i_1,i_2,i_3\le m-1} \|G^{N,i_1}_s\|_{W^{1-r,q}_{p,i_1}} \|G^{N,i_2}_s\|_{W^{1-r,q}_{p,i_2}} \|G^{N,i_3}_s\|_{W^{1-r,q}_{p,i_3}} \\[-2mm]
+\Big( N^{-1} + e^{-\lambda_0 p s} \|G^{N,2}_\circ\|_{W^{-2,q}_{3p,2}} \Big)\|G_s^{N,m}\|_{W^{1-r,q}_{p,m}}\bigg)\,\ddr s,
\end{multline}
where the multiplicative constant further depends on $m,r$, $Q_{mp}(F_\circ^{N,1})$, and on $\|W\|_{W^{d+r+1,\infty}\cap H^s(\R^d)}$ for some $s >2r+\tfrac{d}{2}-1$.
\end{prop}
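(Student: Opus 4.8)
The plan is to combine a Duhamel (variation-of-constants) formula for the correlation hierarchy of Lemma~\ref{lem:hier-corr} with the ergodic estimates of Lemma~\ref{lem:ergodic}. Write the equation of Lemma~\ref{lem:hier-corr} as $\partial_tG^{N,m}=L_{N,m}G^{N,m}+\mathcal{R}^{N,m}$, where $\mathcal{R}^{N,m}$ gathers the remaining terms (those built from the operators $H_k$ and $S_{k,\ell}$, and the mean-field-defect terms on the last line). Let $\{U^{N,m}_{s,t}\}_{0\le s\le t}$ be the backward dual evolution generated by $L_{N,m}$; since $L_{N,m}$ is a tensor sum, $U^{N,m}_{s,t}=(U^\circ_{s,t})^{\otimes m}$ where $U^\circ_{s,t}$ is the single-particle dual evolution of $L_{N,m}^\circ$. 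The operator $L_{N,m}^\circ$ is an $O(\kappa)$-perturbation of the one in~\eqref{eq:def-Ust} (it carries the factor $\tfrac{N-m}N$ and the extra nonlocal term $\kappa\tfrac{N-m}N\nabla_v\mu\cdot\int\nabla W(x-\cdot)h$), so for $\kappa$ small it obeys the same ergodic estimate as in Lemma~\ref{lem:ergodic}, namely $U^\circ_{s,t}\varphi=c_{s,t}+\varrho_{s,t}$ with $c_{s,t}$ a constant satisfying $|c_{s,t}|\lesssim\|\langle z\rangle^{-p}\varphi\|_{W^{r,q'}(\Xd)}$ and $\|\langle z\rangle^{-p}\varrho_{s,t}\|_{W^{r,q'}(\Xd)}\lesssim e^{-\lambda_0 p(t-s)}\|\langle z\rangle^{-p}\varphi\|_{W^{r,q'}(\Xd)}$ (alternatively the $m$-particle version is quoted from~\cite{BD_2024}). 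Testing the Duhamel identity against $\varphi^{\otimes m}$ with $\|\langle z\rangle^{-p}\varphi\|_{W^{r,q'}(\Xd)}\le1$ and expanding $U^{N,m}_{s,t}\varphi^{\otimes m}=(c_{s,t}+\varrho_{s,t})^{\otimes m}$, the crucial observation is that both $G^{N,m}_\circ$ and $\mathcal{R}^{N,m}_s=\partial_sG^{N,m}_s-L_{N,m}G^{N,m}_s$ have all their one-variable marginals equal to zero --- for the latter, because $\partial_sG^{N,m}_s$ does and $L_{N,m}G^{N,m}_s$ does too, $L_{N,m}^\circ$ being mass-conserving. Hence every term of the expansion carrying at least one factor $c_{s,t}$ pairs to zero (it integrates one variable against a constant, hitting a vanishing marginal), and only $\varrho_{s,t}^{\otimes m}$ survives. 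This produces exactly the decay $e^{-\lambda_0 pm(t-s)}$ and gives
\begin{equation*}
\|G^{N,m}_t\|_{W^{-r,q}_{p,m}}\,\lesssim_m\,e^{-\lambda_0 pmt}\|G^{N,m}_\circ\|_{W^{-r,q}_{p,m}}+\int_0^te^{-\lambda_0 pm(t-s)}\|\mathcal{R}^{N,m}_s\|_{W^{-r,q}_{p,m}}\,\ddr s.
\end{equation*}

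It then remains to bound $\|\mathcal{R}^{N,m}_s\|_{W^{-r,q}_{p,m}}\le\sum\|(\text{individual term})_s\|_{W^{-r,q}_{p,m}}$, with no decay left to extract --- only the correct loss of one derivative. The basic device mirrors Steps~1--2 of the proof of Theorem~\ref{th:creation}: for a test function $\varphi$ with $\|\langle z\rangle^{-p}\varphi\|_{W^{r,q'}(\Xd)}\le1$, integrate by parts in $v_k$ so as to move the derivative $\nabla_{v_k}$ inside $H_k$ or $S_{k,\ell}$ onto $\varphi^{\otimes m}$, and then decompose the non-tensorized kernel $\nabla W(x_k-x_*)$ (resp. $\nabla W(x_k-x_\ell)$) into Fourier modes $\nabla W(x)=C\int_{\R^d}i\xi\widehat W(\xi)e^{ix\cdot\xi}\ddr\xi$. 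Each such decomposition turns a pairing against one correlation factor into a pairing against a genuine tensor product; since correlations are symmetric, polarization bounds this by $C_j\|G^{N,j}\|_{W^{1-r,q}_{p,j}}$ times a product of weighted $W^{r-1,q'}$-norms of the test functions, controlled by $\|\langle z\rangle^{-p}e_{\pm\xi}\|_{W^{r-1,q'}(\Xd)}\lesssim\langle\xi\rangle^{r-1}$ (here $pq'\gg1$ is used) and $\|\langle z\rangle^{-p}e_{\pm\xi}\nabla_v\varphi\|_{W^{r-1,q'}(\Xd)}\lesssim\langle\xi\rangle^{r-1}\|\langle z\rangle^{-p}\varphi\|_{W^{r,q'}(\Xd)}$; the residual integrals $\int_{\R^d}\langle\xi\rangle^{2(r-1)}|\xi\widehat W(\xi)|\ddr\xi$ converge by Cauchy--Schwarz once $W\in H^s$ with $s>2r+\tfrac d2-1$. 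In this way every $H_k$- or $S_{k,\ell}$-term is bounded in $W^{-r,q}_{p,m}$ by a product of the $W^{1-r,q}_{p,j}$-norms of the correlation factors it contains; a factor equal to $G^{N,1}=F^{N,1}$ is estimated by a constant via the moment bounds of Lemma~\ref{lem:mom} (this accounts for the dependence of the constant on $Q_{mp}(F^{N,1}_\circ)$). For the mean-field-defect terms, the factor $F^{N,1}_s-\mu_s$ is (after IBP in $v_k$ and the same Fourier decomposition) paired against $e_{\pm\xi}$ or $e_{\pm\xi}\nabla_v\varphi$ and hence controlled by $\|F^{N,1}_s-\mu_s\|_{W^{-3,q}_p(\Xd)}$ times harmless factors --- this is where the hypothesis $r\ge4$ enters, to afford a $W^{4,q'}$-norm of $\varphi$ after one $v$-derivative, so that this defect can be estimated in $W^{-3,q}_p(\Xd)$, the norm for which Step~2 of the proof of Theorem~\ref{th:creation} provides a bound --- and then, using the choice $\mu_\circ=F^{N,1}_\circ$ together with that estimate, $\|F^{N,1}_s-\mu_s\|_{W^{-3,q}_p(\Xd)}\lesssim N^{-1}+e^{-\lambda_0 ps}\|G^{N,2}_\circ\|_{W^{-2,q}_{3p,2}}$ (after renaming $\lambda_0$).

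The last step is purely bookkeeping: one reads off, for each of the seven families of terms in Lemma~\ref{lem:hier-corr}, which term of~\eqref{eq:prop_advance_control_G} it contributes to. The leading term $\kappa\tfrac{N-m}N\sum_kH_kG^{N,\llbracket m\rrbracket\cup\{*\}}$ yields $\|G^{N,m+1}_s\|_{W^{1-r,q}_{p,m+1}}$. The $O(1)$ terms of the form $H_k(G^{N,i_1}G^{N,i_2})$ produce products of two correlation factors of total order $m+1$, and the index constraints in Lemma~\ref{lem:hier-corr} force both orders into $\{2,\dots,m-1\}$ unless one factor is $G^{N,1}$ (in which case, after bounding that factor, one lands in the last term with a $\tfrac1N$), giving the $\sum_{i_1+i_2=m+1}$ term. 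The $O(\tfrac1N)$ terms $H_k(G^{N,i_1}G^{N,i_2})$ and $H_k(G^{N,i_1}G^{N,i_2}G^{N,i_3})$ similarly yield the $N^{-1}\sum_{i_1+i_2+i_3=m+1}$ term and, when a $G^{N,1}$ factor is absorbed, the $N^{-1}\sum_{i_1+i_2=m}$ term. The $\tfrac\kappa N\sum S_{k,\ell}$ terms (using $S_{k,k}=0$ since $W$ is even) give the $N^{-1}\sum_{i_1+i_2=m}$ term and the $N^{-1}\|G^{N,m}_s\|_{W^{1-r,q}_{p,m}}$ part of the last term, while the mean-field-defect terms give the $e^{-\lambda_0 ps}\|G^{N,2}_\circ\|_{W^{-2,q}_{3p,2}}\|G^{N,m}_s\|_{W^{1-r,q}_{p,m}}$ part; throughout, the combinatorial prefactors $\tfrac{m-1-\sharp A}N$, etc., are $O(1)$ or $O(\tfrac1N)$ as needed, and the overall $\kappa\le1$ is absorbed.

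The two genuinely delicate points are: first, that \emph{no single term} of the correlation hierarchy has vanishing marginals, so the full $m$-particle gap $\lambda_0 pm$ cannot be harvested term by term --- the resolution is to apply the ergodic estimate to the whole remainder $\mathcal{R}^{N,m}=\partial_tG^{N,m}-L_{N,m}G^{N,m}$, which \emph{does} have vanishing marginals, and only \emph{afterwards} split into individual terms for the (decay-free, derivative-losing) functional estimates; and second, the long but mechanical combinatorial accounting of the terms of Lemma~\ref{lem:hier-corr}, tracking which products of correlations arise, the powers of $N^{-1}$, and the single derivative lost (recorded by the shift from $-r$ to $1-r$). I expect the first point --- together with correctly justifying the ergodic estimate for the perturbed $m$-particle operator $L_{N,m}$ --- to be the main conceptual obstacle, the rest being a careful but routine unwinding of Lemma~\ref{lem:hier-corr}.
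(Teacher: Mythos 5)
Your proposal is correct and follows essentially the same route as the paper's proof: Duhamel for the hierarchy of Lemma~\ref{lem:hier-corr}, the tensorized ergodic estimate for the one-particle operator $L^\circ_{N,m}$ (which the paper, like your fallback, simply quotes from~\cite{BD_2024}, rather than re-deriving it perturbatively), the derivative-losing Fourier-decomposition/polarization bounds on $H_k$ and $S_{k,\ell}$ with $\|W\|_{H^s}$, $s>2r+\frac d2-1$, and Theorem~\ref{th:creation} with $\mu_\circ=F^{N,1}_\circ$ for the $F^{N,1}-\mu$ factor, which is indeed where $r\ge4$ enters. Your explicit observation that the full $m$-fold decay rate must be harvested on the whole remainder $\partial_tG^{N,m}-L_{N,m}G^{N,m}$ (which has vanishing marginals) before splitting into individual hierarchy terms is a correct and careful rendering of a point the paper leaves implicit.
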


\begin{proof}
We start by recalling the available ergodic estimates for the linearized mean-field operator~$L_{N,m}$.
Given $m\ge1$ and $u_\circ^m\in C^\infty_c(\Xd^m)$ with $\int_\Xd u_\circ^m(z_{\llbracket m\rrbracket})\,dz_j=0$ for all $1\le j\le m$, let us consider the solution~$u^m$ of
\[\left\{\begin{array}{l}
\partial_tu^m=L_{N,m}u^m,\quad t\ge0,\\
u^m|_{t=0}=u^m_\circ,
\end{array}\right.\]
where in the definition~\eqref{eq:def-LNm} of $L_{N,m}$ we recall that we have chosen $\mu$ as the solution of the mean-field equation~\eqref{eq:VFP} with initial condition $\mu_\circ=F^{N,1}_\circ$.
By definition of $L_{N,m}$ as a Kronecker sum, we can write
\[u^m_t\,=\,(V^{N,m}_{0,t})^{\otimes m}u^m_\circ,\]
where $\{V^{N,m}_{s,t}\}_{0\le s\le t}$ stands for the one-particle evolution defined by
\[\left\{\begin{array}{ll}
\partial_tV^{N,m}_{s,t}h=L_{N,m}^\circ V^{N,m}_{s,t},&t\ge s,\\
V^{N,m}_{s,t}h|_{t=s}=h.&s
\end{array}\right.\]
In each component, we may then appeal to the ergodic estimate derived in~\cite[Theorem~2.13]{BD_2024}: there exist $\kappa_0,\lambda_0>0$ (only depending on $d,\beta,a,\|W\|_{W^{1,\infty}(\R^d)}$) such that, given $\kappa\in[0,\kappa_0]$, $1<q\le2$, and $0<p\le1$ with $pq'\gg1$ (only depending on $d,\beta,a$), we have for all $r\ge2$ and $t\ge0$,
\[\|u_t^m\|_{W^{-r,q}_{p,m}}\,\lesssim\,e^{-\lambda_0pmt}\|u_\circ^m\|_{W^{-r,q}_{p,m}},\]
where the multiplicative constant only depends on $d,\beta,a,p,q,r,m$, $Q_2(F^{N,1}_\circ)$, $\|W\|_{W^{d+r+1,\infty}(\R^d)}$.
Testing the definition of $S_{k,\ell}h^P$ with a tensorized test function, decomposing $W$ as a superposition of Fourier modes and using polarization similarly as in~\eqref{eq:bound_fourier_G_N_2} to rewrite the resulting expression as a superposition of integrals of $h^P$ tested with a tensorized test function (recall the definition~\eqref{eq:definition_W_tilde} of functional spaces), we are led to
\[\|S_{k,\ell}h^{P}\|_{W^{-r,q}_{p,\sharp P}} \,\lesssim\, \|h^{P}\|_{{W^{1-r,q}_{p,\sharp P}}},\]
and similarly,
\[\| H_k h^{P\cup \{\ast\}}\|_{W^{-r,q}_{p,\sharp P}} \,\lesssim\, \|h^{P\cup \{\ast\}}\|_{{W^{1-r,q}_{p,\sharp P+1}}},\]
where the multiplicative constants only depend on $d,r,\sharp P$, and $\|W\|_{H^s(\R^d)}$ for some $s > 2r+\tfrac{d}{2}-1$.
Now applying this to the equation for $G^{N,m}$ in Lemma~\ref{lem:hier-corr}, we get for all $2\le m\le N$, $r\ge2$, and $t\ge0$,
\begin{multline*}
\|G^{N,m}_t\|_{W^{-r,q}_{p,m}} \,\lesssim\,e^{-\lambda_0p mt}\|G^{N,m}_\circ\|_{W^{-r,q}_{p,m}}\\
+\int_0^te^{-\lambda_0 pm(t-s)}\bigg(\|G^{N,m+1}_s\|_{W^{1-r,q}_{p,m+1}}
+N^{-1}\!\!\! \sum_{i_1+i_2=m+1\atop 1\le i_1,i_2\le m}\|G_s^{N,i_1}\|_{W^{1-r,q}_{p,i_1}} \|G_s^{N,i_2}\|_{W^{1-r,q}_{p,i_2}}\\
+\sum_{i_1+i_2=m+1\atop 1\le i_1,i_2\le m-1}\|G_s^{N,i_1}\|_{W^{1-r,q}_{p,i_1}} \|G_s^{N,i_2}\|_{W^{1-r,q}_{p,i_2}}
+N^{-1}\!\!\!\sum_{i_1+i_2=m\atop 1\le i_1,i_2\le m-1}\|G^{N,i_1}_s\|_{W^{1-r,q}_{p,i_1}}\|G^{N,i_2}_s\|_{W^{1-r,q}_{p,i_2}}\\
+N^{-1}\!\!\! \sum_{i_1+i_2+i_3=m+1\atop 1\le i_1,i_2,i_3\le m-1} \|G^{N,i_1}_s\|_{W^{1-r,q}_{p,i_1}} \|G^{N,i_2}_s\|_{W^{1-r,q}_{p,i_2}} \|G^{N,i_3}_s\|_{W^{1-r,q}_{p,i_3}}\\[-3mm]
+\Big(N^{-1}+\|F^{N,1}_s-\mu_s\|_{W^{1-r,q}_{p,1}}\Big)\|G^{N,m}_s\|_{W^{1-r,q}_{p,m}}\bigg)\,\ddr s.
\end{multline*}
Note that in the first sum in the right-hand side the terms with $(i_1,i_2)=(m,1)$ or $(1,m)$ can be bounded by $N^{-1}\|G_s^{N,m}\|_{W^{1-r,q}_{p,m}}$, which already appears in the last term. Hence, this first sum can be restricted to $1\le i_1,i_2\le m-1$, which is then bounded by the second sum.
For the last right-hand side term, involving $F^{N,1}-\mu$, we appeal to Theorem~\ref{th:creation} with initial condition $\mu_\circ=F_\circ^{N,1}$:
provided that~$\lambda_0$ is chosen small enough (only depending on $d,\beta,a$, $\|W\|_{W^{1,\infty}(\R^d)}$), for all $1<q\le2$ and $0<p \le \frac16$ with $pq'\gg1$ large enough (only depending on $d,\beta,a$), provided that $\kappa\ll1$ is small enough (further depending on $p,q$, $Q_2(F_\circ^{N,1})$, $\|W\|_{W^{d+3,\infty}\cap H^s(\R^d)}$ for some $s>\frac d2+5$), we have
\[\|F^{N,1}_t-\mu_t\|_{W^{-3,q}_{p,1}}
\,\lesssim\,
N^{-1}+e^{-2\lambda_0pt}\|G^{N,2}_\circ\|_{W^{-2,q}_{3p,2}},\]
which then yields the conclusion.
\end{proof}

\subsection{Proof of Theorem \ref{th:estim-GNm}}\label{subsec:pf_thm_GNm}
The claim~\eqref{eq:estim-GN2-cr} already follows from~\eqref{eq:step1suboptimal} in the proof of Theorem~\ref{th:creation} with the choice $\mu_\circ=F^{N,1}_\circ$.
It remains to prove~\eqref{eq:conclusion_thm_estim_GNm}. Let $1<q\le2$ and $0<p\le\frac16$ with $pq'\gg1$ large enough (only depending on $d,\beta,a$),
let $2 \le m \le N$ and $\alpha\in[0,1]$ be fixed,
and assume that initially
\begin{equation}\label{eq:ass-init}
\|G_\circ^{N,j}\|_{W^{-2,q}_{3p,j}}\,\le\,C_j^\circ N^{-\alpha(j-1)},\qquad\text{for all $1\le j\le 2m - 1$.}
\end{equation}
We split the proof into two main steps.

\medskip\noindent
{\bf Step~1:} Suboptimal estimates.\\
Let $\lambda_0,\kappa$ be as in Proposition~\ref{prop:ito}.
By a straightforward computation, under the initial assumption~\eqref{eq:ass-init}, the result of Proposition~\ref{prop:basic_control_Gm} with $p_0=p/n!$ yields for all $2\le n\le 2m-1$, $r \ge2$, and $t\ge0$,
\begin{equation*}
\|G^{N,n}_t\|_{W^{-r,q}_{p/n!,n}}\,\lesssim\,N^{-\frac n2}
+e^{-\frac{\lambda_0 p}{n!}t}N^{-\alpha\frac n2}
+\sum_{1\le k\le l\le n-1}\sum_{j_1,\ldots,j_k\ge1\atop j_1+\ldots+j_k=l}N^{l-k-n+1}\prod_{i=1}^k\|G_t^{N,j_i}\|_{W^{-r,q}_{p/(n-1)!,j_i}}.
\end{equation*}
By a direct iteration of these  estimates, inductively eliminating correlations in the right-hand side, and using $\|G^{N,1}_t\|{}_{W^{-r,q}_{p,1}}\lesssim1$, we deduce for all $2\le n\le 2m-1$, $r\ge2$, and $t\ge0$,
\begin{equation}\label{eq:suboptimal_proof_Thm_correl}
\|G^{N,n}_t\|_{W^{-r,q}_{p/n!,n}}\,\lesssim\,N^{-\frac n2}
+e^{-\frac{\lambda_0 p}{n!}t}N^{-\alpha\frac n2}.
\end{equation}

\medskip\noindent
{\bf Step~2:} Conclusion via BBGKY analysis.\\
We leverage the suboptimal correlation estimates of Step~1, using the BBGKY estimates of Proposition~\ref{prop:advance_control_GNm} and arguing by induction.
As the desired estimates~\eqref{eq:conclusion_thm_estim_GNm} are already known to hold for two-particle correlations, cf.~\eqref{eq:estim-GN2-cr}, we can assume that there is some $3\le n\le m$ such that the following holds: provided that $\kappa$ is small enough (also depending on $n$), we have for all $t\ge0$,
\begin{equation}\label{eq:induc-hyp}
\|G_t^{N,j}\|_{W_{p_j,j}^{-r_j,q_j}}\,\lesssim\,N^{1-j}+e^{-\lambda_0p_jt}N^{\alpha(1-j)},\qquad\text{for all $2\le j\le n-1$,}
\end{equation}
where we have set
\[r_j:=j+1,\qquad p_j\,:=\,\tfrac{p}{(2j-2)!},\qquad q_j\,:=\,\tfrac{(2j-2)!q'}{(2j-2)!q'-1}.\]
Under this induction assumption, we shall show that the same estimate~\eqref{eq:induc-hyp} is also automatically valid for $j=n$ up to further restricting the range of~$\kappa$, which then concludes the proof.
We split the argument into two further substeps.

\medskip\noindent
{\bf Substep~2.1:} Proof that for $\kappa$ small enough (also depending on $n$) we have for all $0\le k\le n-2$, $r\ge4$, and $t\ge0$,
\begin{multline}\label{eq:plop}
\|G^{N,n+k}_t\|_{W^{-r,q_n}_{p_n,n+k}} \,\lesssim\,
N^{1-n-k}+{e^{-\lambda_0p_nt}N^{\alpha(1-n-k)}}\\
+\int_0^t{e^{-\lambda_0p_n(n+k)(t-s)}}\bigg(\|G^{N,n+k+1}_s\|_{W^{1-r,q_n}_{p_n,n+k+1}}
+\Big( N^{-1} + {e^{-\lambda_0 p_n s}}N^{-\alpha}\Big)\|G_s^{N,n+k}\|_{W^{1-r, q_n}_{p_n,n+k}}\\
+\sum_{i=2}^{k+1}\Big(N^{1-i}+e^{-\lambda_0p_ns}N^{\alpha(1-i)}\Big)\|G_s^{N,n+k+1-i}\|_{W^{1-r, q_n}_{p_n,n+k+1-i}}\bigg)\,\ddr s.
\end{multline}
We apply Proposition~\ref{prop:advance_control_GNm} with $m=n+k$ and with exponents $p_n,q_n$ satisfying $p_nq_n'=pq'\gg1$. Provided that $\kappa$ is small enough (depending on $n$), we get from Proposition~\ref{prop:advance_control_GNm}, for all $0 \le k \le n-2$, $r\ge4$, and $t\ge0$,
\begin{multline*}
\|G^{N,n+k}_t\|_{W^{-r,q_n}_{p_n,n+k}} \,\lesssim\,e^{-\lambda_0p_nt}\|G^{N,n+k}_\circ\|_{W^{-r,q_n}_{p_n,n+k}}
+\int_0^t{e^{-\lambda_0p_n(n+k)(t-s)}}\bigg(\|G^{N,n+k+1}_s\|_{W^{1-r,q_n}_{p_n,n+k+1}}\\
+\sum_{i_1+i_2=n+k+1\atop 1\le i_1,i_2\le n+k-1}\|G_s^{N,i_1}\|_{W^{1-r,q_n}_{p_n,i_1}} \|G_s^{N,i_2}\|_{W^{1-r,q_n}_{p_n,i_2}}
+N^{-1}\!\!\!\sum_{i_1+i_2=n+k\atop 1\le i_1,i_2\le n+k-1}\|G^{N,i_1}_s\|_{W^{1-r,q_n}_{p_n,i_1}}\|G^{N,i_2}_s\|_{W^{1-r,q_n}_{p_n,i_2}}\\
+N^{-1}\!\!\! \sum_{i_1+i_2+i_3=n+k+1\atop 1\le i_1,i_2,i_3\le n+k-1} \|G^{N,i_1}_s\|_{W^{1-r,q_n}_{p_n,i_1}} \|G^{N,i_2}_s\|_{W^{1-r,q_n}_{p_n,i_2}} \|G^{N,i_3}_s\|_{W^{1-r,q_n}_{p_n,i_3}} \\[-2mm]
+\Big( N^{-1} + e^{-\lambda_0 p_n s} \|G^{N,2}_\circ\|_{W^{-2,q_n}_{3p_n,2}} \Big)\|G_s^{N,n+k}\|_{W^{1-r,q_n}_{p_n,n+k}}\bigg)\,\ddr s.
\end{multline*}
Note that the conditions $p_jq_j'=pq'>2d$ and $r_j=j+1$ entail by Jensen's inequality, for $j\le n-1$,
\[\|G^{N,j}\|_{W^{1-r_n,q_n}_{p_n,j}}\,\lesssim\,\|G^{N,j}\|_{W^{-r_j,q_j}_{p_j,j}}.\]
Using the initial assumption~\eqref{eq:ass-init}, as well as the induction assumption~\eqref{eq:induc-hyp} for $G^{N,j}$ with $j\le n-1$, the claim~\eqref{eq:plop} follows from the above estimate after straightforward simplifications.

\medskip\noindent
{\bf Substep~2.2:} Proof that for $\kappa$ small enough (depending on $n$) we have for all $0 \le \ell \le n-2$ and~$t\ge0$,
\begin{equation}\label{eq:claim_correl}
\|G^{N,n+k}_t\|_{W^{-3-\ell,q_n}_{p_n,n+k}} \,\lesssim\, N^{-\frac{n+k+\ell}{2}} +e^{-\lambda_0p_nt} N^{-\alpha \frac{n+k+\ell}{2}}, \qquad\text{for all $0 \le k \le n-2-\ell$}.
\end{equation}
This will conclude the proof: indeed, choosing $\ell=n-2$ and $k=0$, the above estimate precisely yields~\eqref{eq:induc-hyp} with $j=n$ as desired.
We turn to the proof of~\eqref{eq:claim_correl} and argue by induction on $\ell$. As for $\ell=0$ the claim~\eqref{eq:claim_correl} follows from~\eqref{eq:suboptimal_proof_Thm_correl}, we can assume that there is some $1\le L< n-1$ such that~\eqref{eq:claim_correl} holds with $\ell=L-1$, that is,
\begin{equation}\label{eq:claim_correl-induc}
\|G^{N,n+k}_t\|_{W^{-2-L,q_n}_{p_n,n+k}} \,\lesssim\, N^{-\frac{n+k+L-1}{2}} + e^{-\lambda_0 p_n t} N^{-\alpha \frac{n+k+L-1}{2}},\qquad\text{for all $0\le k\le n-1-L$},
\end{equation}
and it remains to show that~\eqref{eq:claim_correl} then automatically also holds for $\ell=L$.
This immediately follows from the result~\eqref{eq:plop} of Step~2.1 combined with the induction assumption~\eqref{eq:claim_correl-induc}.
\qed

\section*{Acknowledgements}
The authors thank Arnaud Guillin for pointing out the validity of the creation-of-chaos estimate~\eqref{eq:creation_strong} in Wasserstein distance as a consequence of~\cite{Bolley_2010}.
AB acknowledges financial support from the AAP ``Accueil'' from Universit\'e Lyon 1 Claude Bernard.
MD and MM acknowledge financial support from the European Union (ERC, PASTIS, Grant Agreement n$^\circ$101075879). Views and opinions expressed are however those of the authors only and do not necessarily reflect those of the European Union or the European Research Council Executive Agency. Neither the European Union nor the granting authority can be held responsible for them.

\bibliographystyle{plain}
\bibliography{biblio}

\end{document}